\newtheorem{theorem}{Theorem}[section]
\newtheorem{corollary}[theorem]{Corollary}
\newtheorem{proposition}[theorem]{Proposition}
\newtheorem{lemma}[theorem]{Lemma} 
\newtheorem{sublemma}[theorem]{Sublemma}
\newtheorem*{step1}{Step 1}
\newtheorem*{step1a}{Step 1a}
\newtheorem*{step1b}{Step 1b}
\newtheorem*{step2}{Step 2}
\newtheorem*{step2a}{Step 2a}
\newtheorem*{step2b}{Step 2b}
\newtheorem*{step3}{Step 3}
\theoremstyle{definition}
\newtheorem{definition}[theorem]{Definition}
\theoremstyle{remark}
\newcommand\cC{\mathcal{C}}
\newcommand\D{\mathcal{D}}
\newcommand{\J}{\mathcal{J}}
\newcommand{\N}{\mathbb{N}}
\newcommand{\R}{\mathbb{R}}
\renewcommand{\H}{\mathbb{H}}
\newcommand{\C}{\mathbb{C}}
\newcommand{\Z}{\mathbb{Z}}
\newcommand{\on}{\operatorname}
\newcommand\dbar{\ol\partial}
\newcommand\supp{\on{supp}}
\newcommand\loc{{\on{loc}}}
\newcommand\euc{{\on{euc}}}
\newcommand{\im}{\on{im}}
\newcommand{\diam}{\on{diam}}
\newcommand\qu{/\kern-.7ex/} 
\newcommand\lqu{\backslash \kern-.7ex \backslash}
\def\tint{{\textstyle\int}}
\newcommand{\ol}{\overline}
\newcommand{\ul}{\underline}
\newcommand{\sr}{\stackrel}
\newcommand{\wh}{\widehat}
\newcommand{\wt}{\widetilde}
\newcommand{\lan}{\langle}
\newcommand{\ran}{\rangle}
\def\d{\delta}
\newcommand{\eps}{\epsilon}
\def\om{\omega}
\def\rD{{\rm D}}
\def\rT{{\rm T}}
\def\rd{{\rm d}}
\renewcommand{\d}{{\rm d}}
\def\bi{\mathbf{i}}
\def\bj{\mathbf{j}}
\def\bJ{\mathbf{J}}
\def\bQ{\mathbf{Q}}
\def\dt{{\rm d}t}
\newcommand{\less}{{\smallsetminus}}
\newcounter{qcounter}
\newenvironment{enumlist}
   {
      \begin{list}
         {\bf\Alph{qcounter})} 
         {
         \usecounter{qcounter}
                     \setlength{\itemsep}{.5ex}
            \setlength{\leftmargin}{1em}
         }
   }
   {
      \end{list}
   }
\newcommand\quotient[2]{
        \mathchoice
            {% \displaystyle
                \text{\raise1ex\hbox{$#1$}\Big/\lower1ex\hbox{$#2$}}%
            }
            {% \textstyle
                #1\,/\,#2
            }
            {% \scriptstyle
                #1\,/\,#2
            }
            {% \scriptscriptstyle
                #1\,/\,#2
            }
    }
\newcommand\quoti[2]{
                \text{\raise1ex\hbox{$#1$}/\lower1ex\hbox{$\scriptstyle#2$}}
  }
\newcommand\quot[2]{
                \text{\raise1ex\hbox{$#1\!\!$}/\lower1ex\hbox{$\!\scriptstyle#2$}}
  }
\newcommand\quo[2]{
                \text{\raise.8ex\hbox{$\scriptstyle#1\!$}/\lower.8ex\hbox{$\!\scriptstyle#2$}}
  }
\newcommand\qq[2]{
                \text{\raise.8ex\hbox{$#1\!$}/\lower.8ex\hbox{$#2$}}
}
\begin{document}

\title{Pseudoholomorphic quilts with figure eight singularity}
\author{Nathaniel Bottman}
\address{School of Mathematics, Institute for Advanced Study,
1 Einstein Dr, Princeton, NJ 08540}
\email{\href{mailto:nbottman@math.ias.edu}{nbottman@math.ias.edu}}

\begin{abstract}  
We show that the novel figure eight singularity in a pseudoholomorphic quilt can be continuously removed when composition of Lagrangian correspondences is cleanly immersed.  The proof of this result requires a collection of width-independent elliptic estimates that allow for nonstandard complex structures on the domain.
\end{abstract} 

\maketitle

\section{Introduction}

%\comment{should consider calling degenerate strip quilts ``\(u\)''

%be clearer about regularity of AC structures --- and I don't think \(\cC^k_\loc\) makes sense for the AC structures}

We consider compact
%N
%\comment{closed??}
Lagrangian correspondences \({L_{01} \subset M_0^- \times M_1}\) and \(L_{12} \subset M_1^- \times M_2\), where \(M_0,M_1,M_2\) are closed symplectic manifolds,
and where \(M_i^- := (M_i, -\om_{M_i})\).
The {\bf geometric composition} of the Lagrangian correspondences is 
\(L_{01} \circ L_{12} := \pi_{02}( L_{01} \times_{M_1} L_{12}) \), the image under the projection \( \pi_{02}\colon M_0^- \times
M_1 \times M_1^- \times M_2 \to M_0^- \times M_2 \) of the fiber product \begin{align*} 
L_{01} \times_{M_1} L_{12} := (L_{01} \times L_{12})
 \cap (M_0^- \times \Delta_{M_1} \times M_2) .
\end{align*}
Here \(\Delta_{M_1} \subset M_1 \times M_1^-\) is the diagonal.
If \(L_{01}\times L_{12}\) intersects \(M_0^- \times \Delta_1 \times M_2\) transversely
then \(\pi_{02}\colon L_{01} \times_{M_1} L_{12} \to M_0^-\times M_2\) is a Lagrangian immersion (see \cite{gu:rev,quiltfloer}), in which case we call \(L_{01}\circ L_{12}\) an {\bf immersed composition}.
In the case of {\bf embedded composition}, where the projection is injective and hence a Lagrangian embedding, monotonicity and Maslov index assumptions allowed Wehrheim--Woodward \cite{isom} to establish an isomorphism of quilted Floer cohomologies  (as defined in \cite{quiltfloer})
\begin{equation} \label{eq:HFiso} 
HF(\ldots, L_{01},L_{12}, \ldots ) \cong HF(\ldots , L_{01} \circ L_{12}, \ldots) .
\end{equation}
The analytic core of the proof was a {\bf{strip-shrinking degeneration}}, in which a triple of pseudoholomorphic strips coupled by Lagrangian seam conditions degenerates to a pair of strips, via the width of the middle strip shrinking to zero.  The monotonicity and embeddedness assumptions allowed for an implicit exclusion of all bubbling, which was conjectured to include a novel {\bf{figure eight bubbling}} that (unlike disk or sphere bubbling) could be an algebraic obstruction to \eqref{eq:HFiso}.

\subsection{Removal of singularity}

\begin{figure}
\centering
\def\svgwidth{\columnwidth}
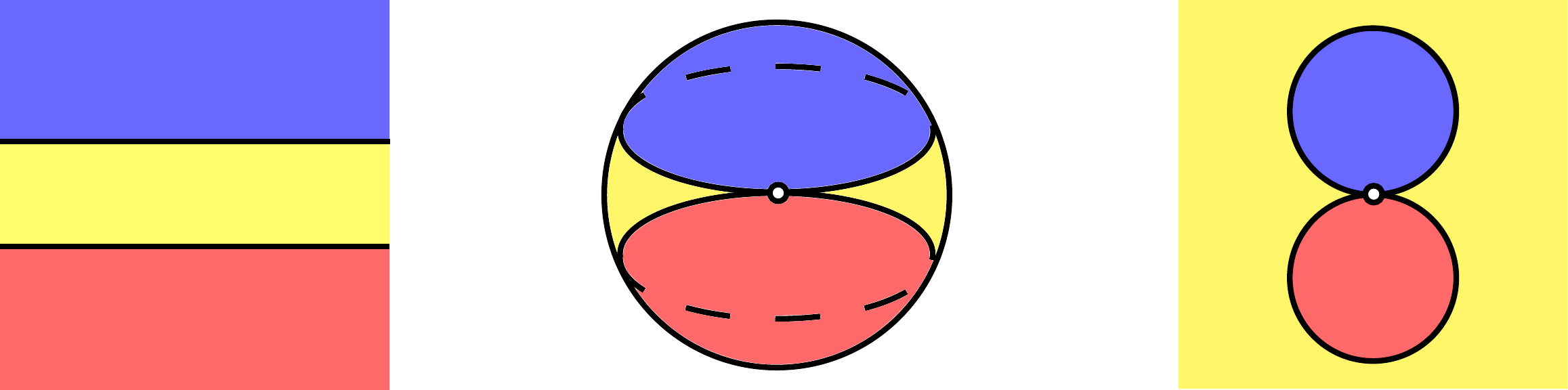
\caption{The left figure illustrates a figure eight bubble, the middle figure illustrates its reparametrization as a pseudoholomorphic quilt whose domain is the punctured sphere, and the right figure illustrates an inverted figure eight (defined in \S\ref{sec:remsing}, and equivalent to the left figure via \(z \mapsto -1/z\)).  The domain of the left and right figures is \(\C\), and the point at infinity in the left figure corresponds to the punctures in the middle and right figures.}
\label{fig:viewsof8}
\end{figure}

The current author and Katrin Wehrheim prove in \cite{bw:compactness} that a blowup of the gradient in a sequence of pseudoholomorphic quilts with an annulus or strip of shrinking width gives rise to one of the standard bubbling phenomena (pseudoholomorphic spheres and disks) or a nontrivial {\bf figure eight bubble}, as depicted in Figure~\ref{fig:viewsof8}. The latter is a tuple of finite energy pseudoholomorphic maps
\begin{equation} \label{eq:8}
w_0\colon \R\times(-\infty,-\tfrac 1 2]\to M_0, \qquad
w_1\colon \R\times[-\tfrac 1 2, \tfrac 1 2]\to M_1, \qquad
w_2\colon \R\times[\tfrac 1 2, \infty)\to M_2
\end{equation}
satisfying the {\bf seam conditions}
\begin{equation*}
(w_0(s,-\tfrac 1 2),w_1(s,-\tfrac 1 2))\in L_{01} , \quad
(w_1(s,\tfrac 1 2),w_2(s,\tfrac 1 2))\in L_{12} 
\qquad\forall \: s\in \R .
\end{equation*}
In the current paper we apply this Gromov Compactness Theorem to show that the figure eight singularity can be removed, as \cite{isom} conjectured:

\medskip

\noindent{\bf Removal of Singularity Theorem~\ref{thm:remsing}:} 
If the composition \(L_{01} \circ L_{12}\) is {\bf cleanly immersed} (immersed, and in addition the local branches of \(L_{01} \circ L_{12}\) intersect one another cleanly), then \(w_0\) resp.\ \(w_2\) extend to continuous maps on \(D^2 \cong (\R \times (-\infty, 0]) \cup \{\infty\}\) resp.\ \(D^2 \cong (\R \times [0, \infty)) \cup \{\infty\}\), and \(w_1(s, -)\) converges to constant paths as $s\to\pm\infty$. If \(L_{01} \circ L_{12}\) is embedded, then the latter limits are equal.

\medskip

\noindent This theorem is the first step in the program outlined in \cite{b:thesis}, which proposes a collection of composition operations amongst Fukaya categories of different symplectic manifolds.

\medskip

\noindent In support of \cite{b:thesis}, Appendix~\ref{app:remsing} also proves the analogous removal of singularities for pseudoholomorphic disks with boundary values in  an immersed Lagrangian with clean self-intersections.
These results are not necessarily new, see Appendix~\ref{app:remsing} for citations, but provided for the sake of completeness. 
It is also conceptually useful to recast the (possibly singular) disk bubbles with boundary on \(L_{01}\circ L_{12}\) as {\bf squashed eight bubbles}, that is as triples of finite energy pseudoholomorphic maps \begin{align*}
w_0\colon \R\times(-\infty,0]\to M_0, \qquad w_1\colon \R \to M_1, \qquad w_2\colon \R\times[0, \infty)\to M_2
\end{align*}
satisfying the generalized seam condition
\begin{equation*}
\bigl(w_0(s,0),w_1(s), w_1(s),w_2(s,0)\bigr)\in L_{01} \times_{M_1} L_{12} \qquad \forall \: s\in \R.
\end{equation*}

\subsection{Uniform elliptic estimates for varying widths and complex structures}
There is a further logical dependence between \cite{bw:compactness} and the current paper: In Lemma~\ref{lem:masterestimate} we substantially strengthen the strip-shrinking estimates in \cite{isom} --- in particular, from embedded to immersed geometric composition.
These strengthened estimates form the analytic core of Theorem~\ref{thm:nonfoldedstripshrink}, which is used to prove a Gromov Compactness Theorem in \cite{bw:compactness}.
One of the ingredients in Lemma~\ref{lem:masterestimate} is a special connection that allows us to obtain estimates without boundary terms for quilted Cauchy--Riemann operators, with uniform constants for all small widths of a strip.
This allows us to strengthen the uniform \(H^2\cap W^{1,4}\) estimates established in \cite{isom} to \(H^3\) and thus \(\cC^1\), which is e.g.\ needed to deduce nontriviality of bubbles with generalized boundary condition in \(L_{01}\circ L_{12}\).

Our estimates allow for nonstandard complex structures on the shrinking strip.
This is necessitated by the following analytic formulation for the figure eight singularity:
In cylindrical coordinates for a neighborhood of infinity in \eqref{eq:8}, 
the two seams become two pairs of curves approaching each other asymptotically (see the right figure in Figure~\ref{fig:viewsof8}).
%N
%\comment{clearer way to say this?}
On finite cylinders, the standard complex structure on this quilted surface can be pulled back to a quilted surface in which the width of the strips is constant and the complex structures are nonstandard, but converge in \(\cC^0\) and stay within a controlled \(\cC^k\)-distance from the standard structure for any \(k \geq 1\).

\medskip

\noindent The hypothesis that \(M_0,M_1,M_2\) are closed is not essential: As explained in \cite{bw:compactness}, it is enough for the symplectic manifolds to be geometrically bounded and to have a priori \(\cC^0\)-bounds on the various pseudoholomorphic curves.
In a future paper we will treat the noncompact setting in a systematic fashion.

\subsection{Acknowledgements}

The author is grateful to his former PhD advisor, Katrin Wehrheim, for suggesting in early 2012 that he study figure eight bubbles, and for generously sharing her knowledge throughout this project.
Casim Abbas and Helmut Hofer shared their approach to a removal-of-singularity result in their unpublished book \cite{ah}, which led to a crucial part of the argument in \S\ref{sec:remsing}.
The author acknowledges support from an NSF Graduate Research Fellowship and a Davidson Fellowship, and would like to thank the Institute for Advanced Study, Princeton University, and the University of California, Berkeley for their hospitality.

\section{Removal of singularity for the figure eight bubble} \label{sec:remsing}

%\comment{box the hypotheses? well, done below\ldots}

In this section and the next we will be working with symplectic manifolds \(M_0, M_1, M_2\), almost complex structures \(J_0, J_1, J_2\), and pseudoholomorphic curves with seam conditions defined by compact Lagrangian correspondences \begin{align} \label{eq:lagcorr}
L_{01} \subset M_0^- \times M_1, \qquad L_{12} \subset M_1^- \times M_2,
\end{align}
with \(L_{01} \circ L_{12}\) either immersed or cleanly immersed:
\begin{itemize}
\item \(L_{01}\) and \(L_{12}\) have {\bf immersed composition} if the intersection \begin{align*}
L_{01} \times_{M_1} L_{12} = (L_{01} \times L_{12}) \cap (M_0 \times \Delta_{M_1} \times M_2)
\end{align*}
is transverse.
This implies that \(\pi_{02}\colon L_{01} \times_{M_1} L_{12} \to M_0^- \times M_2\) is a Lagrangian immersion, e.g.\ by \cite[Lemma~2.0.5]{quiltfloer}, and in this situation we will denote the image by \(L_{01} \circ L_{12} := \pi_{02}( L_{01} \times_{M_1} L_{12} )\).

\item If \(L_{01}, L_{12}\) have immersed composition and furthermore any two local branches of \(L_{01} \circ L_{12}\) intersect cleanly --- i.e.\ at any intersection of two local branches there is a chart for \(M_0^- \times M_2\) (as a smooth manifold) in which each of those two branches is identified with an open subset of a vector subspace of \(\R^n\) --- then the composition \(L_{01} \circ L_{12}\) is {\bf cleanly immersed}.
\end{itemize}

\medskip

\noindent The purpose of \S\ref{sec:remsing} is to prove a removal of singularity theorem for inverted figure eight bubbles.
\begin{definition} \label{def:inverted8}
An {\bf inverted figure eight bubble between \(\mathbf{L_{01}}\) and \(\mathbf{L_{12}}\)} is  a triple of smooth maps \begin{align*}
 \ul w = \left( \begin{matrix}
 w_0\colon \ol B_1(-i) \less \{0\} \to M_0 \\
 w_1\colon \C^* \less (B_1(i) \cup B_1(-i)) \to M_1 \\
 w_2\colon \ol B_1(i) \less \{0\} \to M_2
 \end{matrix} \right)
 \end{align*}
satisfying the Cauchy--Riemann equations 
\(\partial_s w_\ell + J_\ell(w_\ell)\partial_t w_\ell = 0\) for \(\ell\in\{0,1,2\}\)
and the seam conditions 
\begin{gather*}
(w_0(-i + e^{i\theta}), w_1(-i + e^{i\theta})) \in L_{01} \quad \forall \: \theta \neq \tfrac \pi 2, \qquad
(w_1(i + e^{i\theta}), w_2(i + e^{i\theta})) \in L_{12} \quad \forall \: \theta \neq \tfrac {3\pi} 2 ,
 \end{gather*}
and which have finite energy
\begin{align*}
\tint w_0^* \om_0 + \tint w_1^* \om_1 + \tint w_2^* \om_2 
\;=\; \tfrac 12 \Bigl( \tint |\d w_0|^2 +\tint |\d w_1|^2 +\tint |\d w_2|^2 \Bigr)
< \infty ,
\end{align*}
where we have endowed \(M_\ell\) with the metric \begin{align} \label{eq:metrics}
g_\ell := \om_\ell(-,J_\ell-).
\end{align}
\end{definition}

\noindent Throughout \S\ref{sec:remsing}, the norm of a tangent vector on \(M_\ell\) will always be defined using \(g_\ell\).

\medskip

\begin{center}
\parbox{0.9\columnwidth}{\it Fix for \S\ref{sec:remsing} closed symplectic manifolds \(M_0, M_1, M_2\), compatible almost complex structures \(J_\ell \in \J(M_\ell,\om_\ell), \: \ell \in \{0,1,2\}\), compact Lagrangians \(L_{01}, L_{12}\) as in \eqref{eq:lagcorr} with cleanly-immersed composition, and an inverted figure eight bubble \(\ul w\) between \(L_{01}\) and \(L_{12}\).}
\end{center}

\medskip

\noindent In fact, only the arguments in \S2.2 require the composition \(L_{01} \circ L_{12}\) to be cleanly immersed, rather than just immersed, but we assume the stronger hypothesis throughout \S\ref{sec:remsing} for cohesiveness.

\medskip

The following theorem says that the singularity at 0 of a figure eight bubble can be continuously removed, under the hypothesis of cleanly-immersed composition.

\begin{theorem} \label{thm:remsing}
The maps \(w_0, w_2\) continuously extend to 0,
and the limits \(\lim_{z \to 0, \: {\rm Re}(z) > 0} w_1(z)\) and \(\lim_{z \to 0, \: {\rm Re}(z) < 0} w_1(z)\) both exist.
If moreover the immersion \(\pi_{02}\colon L_{01} \times_{M_1} L_{12} \to M_0^- \times M_2\) is an embedding, then the latter limits are equal
so that \(w_1\) also extends continuously to \(0\).
\end{theorem}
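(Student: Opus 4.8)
The plan is to treat the puncture at $0$ as an asymptotic end and to prove that $\ul w$ converges there exponentially, adapting the strip‑shrinking asymptotic analysis of Abbas--Hofer \cite{ah} to the quilted setting.  First I would pass to cylindrical coordinates near $0$, writing $z = e^{-s-it}$ so that $z \to 0$ becomes $s \to +\infty$ and a punctured neighborhood of $0$ becomes a half-cylinder $[\tau_0,\infty)\times(\R/2\pi\Z)$.  In these coordinates the three domains become this half-cylinder subdivided by the two seams; since the seam circles $\pd B_1(\pm i)$ are tangent to $\R$ at $0$, they become two pairs of curves converging to the lines $\{t\equiv 0\}$ and $\{t\equiv\pi\}$ as $s\to\infty$, so that near each of these lines $w_1$ is defined on a strip whose width tends to $0$.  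As explained in the introduction, on finite sub-cylinders the standard complex structure pulls back to a complex structure on a \emph{constant}-width strip that converges to the standard one in $\cC^0$ and stays within a controlled $\cC^k$-distance of it.  This reduces Theorem~\ref{thm:remsing} to an asymptotic statement for a quilted pseudoholomorphic map on $[\tau_0,\infty)\times(\R/2\pi\Z)$ with a middle strip degenerating near $t\equiv 0$ and $t\equiv\pi$ and nonstandard but controlled complex structures.

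The next step is to rule out concentration at the puncture.  Since $\ul w$ has finite energy, the energy of $\ul w$ over $\{s\geq\tau\}$ tends to $0$ as $\tau\to\infty$; applying the Gromov Compactness Theorem of \cite{bw:compactness} to the rescalings of $\ul w$ near $0$ then shows that no sphere, disk, or figure eight bubble splits off.  For $\tau$ large the energy over $\{s\geq\tau\}$ lies below the $\epsilon$-regularity threshold, so the width-independent quilted elliptic estimates refined in Lemma~\ref{lem:masterestimate} --- whose point is precisely that they tolerate the nonstandard complex structures and the degenerating strip produced above --- give pointwise $\cC^1$ control of $\ul w$ on $\{s\geq\tau+1\}$ in terms of the local energy; in particular $|\d\ul w|\to 0$ as $s\to\infty$.

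The heart of the argument is to upgrade this to \emph{exponential} energy decay: with $E(\tau)$ the energy of $\ul w$ over $\{s\geq\tau\}$, one wants $E(\tau)\leq C e^{-\delta\tau}$.  I would derive this from a differential inequality $E(\tau)\leq -C'E'(\tau)$ produced by a quilted isoperimetric inequality bounding the energy beyond $\{s=\tau\}$ by the squared length of $\ul w|_{\{s=\tau\}}$, hence by the flux $-E'(\tau)$.  The delicate point is that the isoperimetric constant must be uniform as the middle strip degenerates and the complex structures vary; this is where the refined estimates and the clean-immersion hypothesis enter, since the relevant constant is controlled by the geometry of $L_{01}\times_{M_1}L_{12}$ and of the finitely many branches of $L_{01}\circ L_{12}$ through the point to which $\ul w|_{\{s=\tau\}}$ is converging, and cleanliness is exactly what makes the space of limiting constant configurations a manifold along which a Morse--Bott-type asymptotic analysis is available.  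This is the step where the embeddedness used in \cite{isom} to implicitly exclude figure eight bubbling must be replaced by clean immersion together with the width- and complex-structure-uniform estimates, and I expect it to be the main obstacle.

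Granting exponential energy decay, exponential decay of $|\d\ul w|$ follows from one more application of the interior estimates, so $\int|\d w_\ell|$ is finite along every ray to the puncture.  Hence $w_0$ and $w_2$ extend continuously to $0$, and $w_1$ restricted to slices converges to constant paths --- a limit $p_1^+$ as $z\to 0$ with $\Re(z)>0$ (the tongue $t\equiv 0$) and a limit $p_1^-$ with $\Re(z)<0$ (the tongue $t\equiv\pi$).  Passing the seam conditions to these limits gives $(p_0,p_1^{\pm},p_1^{\pm},p_2)\in L_{01}\times_{M_1}L_{12}$, so that $(p_0,p_2)$ lies on $L_{01}\circ L_{12}$ on the branch selected by $p_1^{\pm}$.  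Finally, if $\pi_{02}\colon L_{01}\times_{M_1}L_{12}\to M_0^-\times M_2$ is injective, the two preimages $(p_0,p_1^+,p_1^+,p_2)$ and $(p_0,p_1^-,p_1^-,p_2)$ of $(p_0,p_2)$ must coincide, forcing $p_1^+=p_1^-$ and hence the continuous extension of $w_1$.
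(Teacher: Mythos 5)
Your high-level plan tracks the paper's own proof quite closely: cylindrical coordinates, a gradient bound via a Gromov-compactness/rescaling contradiction, a quilted isoperimetric inequality using clean immersion, exponential energy decay, and then the embeddedness hypothesis to identify the two limits of $w_1$. There are, however, two genuine gaps.

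The first is the claim that small local energy together with the width-independent estimates of Lemma~\ref{lem:masterestimate} directly yields \emph{pointwise} decay $|\d\ul w|\to 0$. Near the two seam regions the conformal structure degenerates, which blocks exactly this kind of $\eps$-regularity or mean-value argument --- this is the difficulty flagged at the start of \S\ref{ss:lengths}. Moreover, Lemma~\ref{lem:masterestimate} is an estimate for a section $\zeta$ relative to a reference degenerate strip quilt, and it requires an a priori $\cC^1$-bound on $\zeta$ as a hypothesis; it therefore cannot be bootstrapped into producing such a bound from energy alone. What the paper actually proves (Lemma~\ref{lem:cylgradbound}) is only a \emph{uniform} gradient bound, obtained by the rescaling contradiction you also mention, invoking Corollary~\ref{cor:rescale} and the obediently-shrinking-widths condition (Sublemma~\ref{sublem:obedient}). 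The decay of the slice lengths is then a separate step (Lemma~\ref{lem:lengthsgotozero}): for $v_1$ it follows because the domain width itself vanishes, and for $v_0,v_2$ it requires the second-order calculus argument on $\Phi(s)=\tfrac12\tint_{-\infty}^s\tint|\d v_0|^2$ using translation-invariant interior estimates on sub-strips bounded away from the seams. You would need to replace your $\eps$-regularity claim with something of this shape.

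The second gap is the existence of the one-sided limits of $w_1$: you pass from exponential decay to ``$w_1$ restricted to slices converges to constant paths'' without argument, yet this is precisely where the immersion structure has to do work. In the paper (Step 3 of the proof of Theorem~\ref{thm:remsing}), one uses the length decay and the already-established convergence of $w_0,w_2$ to produce a path $\beta(s)$ in $L_{01}\times_{M_1}L_{12}$ whose $\pi_{02}$-projection converges, then uses compactness and the finiteness of $\pi_{02}^{-1}(x_{02})$ to pin $\beta(s)$ into a single sheet for $s\ll 0$, whence the limit exists; your sketch records the conclusion of this argument but not the argument. As a smaller point, your derivation of the isoperimetric inequality appeals to a Morse--Bott-type asymptotic picture, whereas Lemma~\ref{lem:isoineq} is proved by a Stokes'-theorem filling construction (Lemma~\ref{lem:cleanconsequences} and Steps 2--3), which avoids any explicit asymptotic operator analysis; the role of clean immersion is the same in both, but the implementations are different.
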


\noindent The proof of this theorem draws on the removal of singularity strategies in \cite[\S7.3]{ah} and in \cite[\S4.5]{ms:jh}.
First, we follow \cite{ah} and establish a uniform gradient bound in cylindrical coordinates near the puncture (Lemma~\ref{lem:cylgradbound}), which we use to show that the lengths of the paths \(\theta \mapsto w_\ell(\epsilon e^{i\theta})\) converge to zero as \(\epsilon \to 0\) (Lemma~\ref{lem:lengthsgotozero}).
The substantial modification to the argument of \cite{ah} is that we must use the Gromov Compactness Theorem \cite{bw:compactness} in order to prove uniform gradient bounds in Lemma~\ref{lem:cylgradbound}.
Once we have proven that lengths go to zero, we follow \cite{ms:jh} and prove an isoperimetric inequality for the energy (Lemma~\ref{lem:isoineq}), which we use to show that the energy on 
disks around the puncture decays exponentially with respect to the logarithm of the radius.
Here the nontrivial modification is in the quilted nature of our isoperimetric inequality.
Finally, an argument from \cite{ah} allows us to conclude that \(w_0\) and \(w_2\) extend continuously to the puncture.
The continuous extension of \(w_1\) follows from the gradient bound in cylindrical coordinates and the immersed composition of \(L_{01}\) and \(L_{12}\).
The formal proof of Theorem~\ref{thm:remsing} is given in \S\ref{ss:isoineq}.

\subsection{Lengths tend to zero} \label{ss:lengths}

The first step toward the Removal of Singularity Theorem~\ref{thm:remsing} is to show that the lengths of the paths \(\theta \mapsto w_\ell(\epsilon e^{i\theta})\) converge to zero as \(\epsilon \to 0\).
This is nontrivial since the conformal structure of the quilted surface near the singularity does not allow us to apply mean value inequalities effectively, as in previous removal of singularity results for pseudoholomorphic curves.
Hence the finiteness of energy only provides a sequence \(\eps^\nu\to 0\) along which the lengths tend to zero.
This allowed Bottman--Wehrheim to deduce a weak removal of singularity in \cite{bw:compactness}, but the stronger Theorem~\ref{thm:remsing} will require the full strength of the generalized strip-shrinking analysis developed in \S\ref{sec:convergence} and the resulting Gromov Compactness Theorem in \cite{bw:compactness}.
We record a consequence of the latter as Corollary~\ref{cor:rescale} below.

\begin{figure}
\centering
\def\svgwidth{\columnwidth}
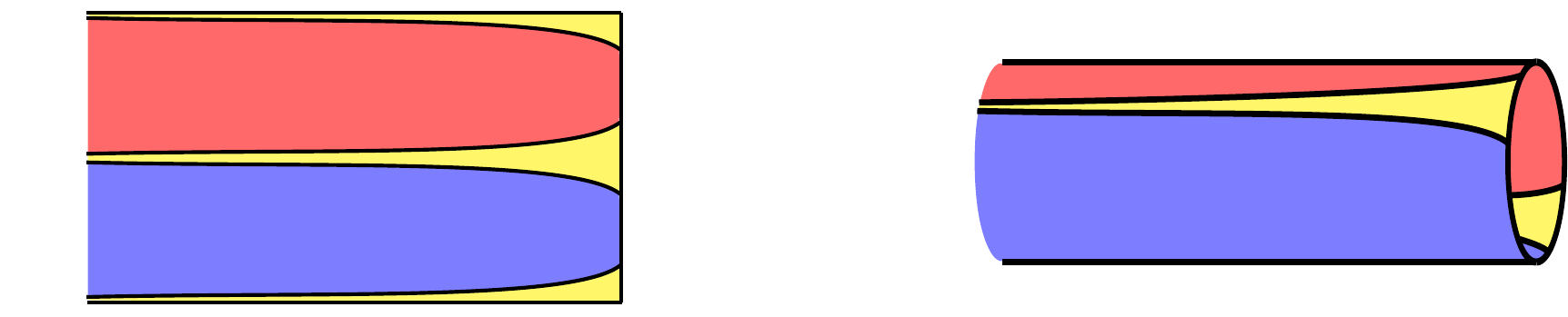
\caption{Two views of the domains $V_0,V_1,V_2 \subset (-\infty,0]\times \R/\Z$ used in \S\ref{ss:lengths}, as a half-infinite strip and cylinder, respectively.
}
\label{fig:Vi}
\end{figure}

In this subsection we will work in cylindrical coordinates centered at the singularity, hence we define the reparametrized maps 
\begin{equation}\label{eq:defexpcoord}
v_\ell(s, t) := w_\ell\bigl( e^{2\pi (s + it) } \bigr) \qquad\text{for} \quad \ell\in\{0,1,2\},
\end{equation}
whose domains \(V_0, V_1, V_2 \subset (-\infty, 0] \times \R/\Z\) are given by
\begin{gather*}
V_0 := \bigl \{ (s,t) \: \left| \: s \leq 0, \: |t - \tfrac 3 4| \leq \tfrac 1 4 - \theta(s) \right. \bigr \}, 
\qquad 
V_2 := \bigl \{ (s, t) \: \left| \: s \leq 0, \: |t - \tfrac 1 4| \leq \tfrac 1 4 - \theta(s) \right. \bigr \}, \\
V_1 := \bigl \{ (s,t) \: \left| \: s \leq 0, \: |t - \tfrac 1 2| \leq \theta(s) \;\lor\; |t - 1| \leq \theta(s) \right. \bigr \}, \nonumber
\end{gather*}
with
\begin{equation}\label{eq:theta}
\theta(s) := \tfrac 1 {2\pi} \arcsin\bigl( \tfrac 1 2 e^{2\pi s} \bigr).
\end{equation}
(See Fig.~\ref{fig:Vi} for an illustration of these domains.)
Now the paths \(w_\ell(\epsilon e^{i\theta})\) for fixed \(\eps \in (0,1]\) correspond to the following paths for fixed \(s = \frac{\log \epsilon}{2\pi} \leq 0\):
\begin{gather}
\gamma_s^0:= v_0(s, -) \, :\;  [\tfrac 1 2 + \theta(s), 1 - \theta(s)] \longrightarrow M_0 , \qquad
\gamma_s^2 := v_2(s, -) \, :\;  [ \theta(s), \tfrac 1 2 - \theta(s)] \longrightarrow M_2 , \nonumber \\
\gamma_s^1 := v_1(s, -) \, :\;  [\tfrac 1 2 - \theta(s), \tfrac 1 2 + \theta(s)] \cup [1 - \theta(s), 1 + \theta(s)] \longrightarrow M_1. \label{eq:gamma}
\end{gather}
The length of \(\gamma_s^\ell\) is given by the integral \(\ell(\gamma_s^\ell):=\tint |\tfrac \d {\d t} \gamma_s^\ell| \,\dt\) over the respective domain, and will be controlled by the following main result of this subsection.

\begin{lemma}\label{lem:lengthsgotozero}
The \(L^2\)-lengths of the paths \(\gamma_s^0, \gamma_s^1, \gamma_s^2\) defined in \eqref{eq:gamma} converge to zero as \(s \to -\infty\):
\begin{align*}
 \int_{1/2 + \theta(s)}^{1 - \theta(s)} |\tfrac\d{\d t} \gamma_s^0|^2 \,\d t
+ \left(\int_{1/2 - \theta(s)}^{1/2 + \theta(s)} + \int_{1 - \theta(s)}^{1 + \theta(s)}\right) |\tfrac\d{\d t} \gamma_s^1|^2 \,\d t + \int_{\theta(s)}^{1/2 - \theta(s)} |\tfrac\d{\d t} \gamma_s^2|^2 \,\d t
\;\underset{s\to-\infty}{\longrightarrow}\; 0 .
\end{align*}
In particular, the length \(\ell(\gamma_s) := \ell(\gamma_s^0) + \ell(\gamma_s^1) + \ell(\gamma_s^2)\) tends to zero as \(s \to -\infty\).
\end{lemma}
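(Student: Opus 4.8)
The plan is to carry out the argument of \cite[\S7.3]{ah}: from the uniform gradient bound in cylindrical coordinates near the puncture provided by Lemma~\ref{lem:cylgradbound}, I would deduce that the angular energies $f_\ell(s) := \int |\partial_t v_\ell(s,\cdot)|^2\,\d t$ --- the integrals over the $s$-slices of $V_\ell$ appearing in the lemma, whose sum is the displayed quantity --- tend to $0$, which then bounds the lengths. The obstruction to a naive argument is the one noted in the text: since the quilted surface near the puncture carries no mean-value inequality, finiteness of energy by itself yields only a sequence $s^\nu\to-\infty$ along which $f_\ell(s^\nu)\to 0$, and the role of the gradient bound is precisely to upgrade this to a full limit. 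I would open with two soft reductions. First, conformal invariance of the Dirichlet integral under $(s,t)\mapsto e^{2\pi(s+it)}$ turns the finite-energy hypothesis of Definition~\ref{def:inverted8} into $\sum_\ell \int_{V_\ell}\bigl(|\partial_s v_\ell|^2+|\partial_t v_\ell|^2\bigr)<\infty$, so that $\int_{-\infty}^0 f_\ell(s)\,\d s<\infty$ for $\ell\in\{0,1,2\}$. Second, by Cauchy--Schwarz $\ell(\gamma_s^\ell)^2 \le |{\rm dom}\,\gamma_s^\ell|\cdot f_\ell(s)\le f_\ell(s)$, since each domain interval in \eqref{eq:gamma} has length at most $1$; hence it is enough to prove $f_\ell(s)\to 0$ as $s\to-\infty$ for each $\ell$, after which the ``In particular'' clause follows.

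For $\ell=1$ this is immediate: the $s$-slice of $V_1$ has total length $4\theta(s)\to 0$, so $f_1(s)\le 4\theta(s)\,\sup|\d v_1|^2\to 0$ straight from the gradient bound. For $\ell\in\{0,2\}$ the mechanism is that $f_\ell$ is not merely integrable but uniformly continuous on a neighborhood of $-\infty$, and a nonnegative, integrable, uniformly continuous function on a half-line must vanish at the end. I would get uniform continuity of $f_\ell$ by splitting $f_\ell(s)-f_\ell(s')$ into a boundary contribution --- from the moving seams $t=\tfrac12+\theta(s)$ and $t=1-\theta(s)$, controlled by $2\,\sup|\d v_\ell|^2\cdot|\theta(s)-\theta(s')|$ together with the Lipschitz bound $|\theta'|\le 1/\sqrt 3$ on $(-\infty,0]$ --- and an interior contribution from the variation of $|\partial_t v_\ell|^2$ over the common part of the two slices, controlled by $\sup\bigl|\d\bigl(|\d v_\ell|^2\bigr)\bigr|\cdot|s-s'|$ once one has a uniform $\cC^1$ bound on $v_\ell$ near the puncture. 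That $\cC^1$ bound is either the strengthened form of Lemma~\ref{lem:cylgradbound} made available by the $H^3$/$\cC^1$ estimates of Lemma~\ref{lem:masterestimate}, or a routine elliptic bootstrap from a $\cC^0$ gradient bound (the seam curves being uniformly smooth near the puncture). Thus $f_\ell$ is uniformly Lipschitz on some $(-\infty,S_0]$, and the conclusion follows.

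The real work of the step therefore lies inside Lemma~\ref{lem:cylgradbound}, where the Gromov Compactness Theorem of \cite{bw:compactness} --- recorded here as Corollary~\ref{cor:rescale} --- is essential; this is the substantial departure from \cite{ah}, whose simpler geometry admits a direct gradient bound. The argument I would use there: if $|\d v_\ell|$ were unbounded near the puncture, choose blowup points at parameters $s^\nu\to-\infty$ and translate the configuration so that they sit at a fixed location; the resulting sequence consists of portions of inverted figure eight configurations with the $M_1$-strip degenerating (width $2\theta(s+s^\nu)\to 0$), and since the total energy is finite the energy carried on any fixed window is at most the tail $\int_{V_\ell\cap\{s\le s^\nu+a\}}|\d v_\ell|^2\to 0$, so that no energy is available for a nonconstant sphere, disk, or figure eight bubble; by Corollary~\ref{cor:rescale} this forces a uniform gradient bound. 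The hard part of \emph{that} argument --- and the reason it needs \S\ref{sec:convergence} rather than an off-the-shelf Gromov compactness statement --- is that the compactness must hold through the degenerating middle strip and across the moving seams $t=\tfrac12+\theta(s+s^\nu)$, $t=1\pm\theta(s+s^\nu)$, with $\cC^1$ (not just $H^2\cap W^{1,4}$) convergence; this is exactly the width-independent elliptic analysis of Lemma~\ref{lem:masterestimate}. A minor bookkeeping point to isolate as a remark is the check that the $\theta(s)$-dependent domains $V_\ell$ translate to the expected limiting quilted cylinder $\R\times\R/\Z$ with its four limiting seams over $t\in\{\tfrac12,1\}$.
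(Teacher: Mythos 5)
Your proposal captures the right global structure --- reduce to showing the angular energies $f_\ell(s) := \int |\partial_t v_\ell(s,\cdot)|^2\,\d t$ tend to zero, handle $\ell=1$ immediately from the vanishing $t$-measure $4\theta(s)$ of the slice, and for $\ell\in\{0,2\}$ combine integrability of $f_\ell$ over $(-\infty,0]$ with slow variation of $f_\ell$ to force the limit --- and that is the mechanism the paper uses, though the packaging differs. The paper does not invoke a ``nonnegative integrable uniformly continuous function on a half-line vanishes at $\infty$'' lemma; it instead studies the tail energy $\Phi(s) := \tfrac12\int_{-\infty}^s\!\int(|\partial_s v_0|^2+|\partial_t v_0|^2)$ over a fixed interior interval, identifies $\Phi'(s)$ with $f_0(s)$ via the Cauchy--Riemann equation, bounds $\Phi''$ by elliptic estimates, and extracts $\Phi'\to 0$ by a short barycentric calculation. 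This is the same principle as your Barbalat-type step, spelled out concretely.

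Where your route has a genuine gap is in establishing the modulus of continuity of $f_\ell$ near the seams. Your ``interior contribution'' bound is $\sup|\d(|\d v_\ell|^2)|\cdot|s-s'|$ over the common sub-interval, which requires a uniform $\cC^0$ bound on second derivatives of $v_\ell$ up to the seams. But as $s\to-\infty$ the seams $t=\tfrac12\pm\theta(s)$, $1\pm\theta(s)$ close together, so a ``routine elliptic bootstrap'' through the seam conditions fails (the constants degenerate with the shrinking strip width); one would need the $\delta$-independent quilted estimates of Lemma~\ref{lem:masterestimate}, and at the $H^4\hookrightarrow\cC^2$ level, not the $H^3\hookrightarrow\cC^1$ level you cite (which only reproduces the $\cC^0$ bound on $\d v_\ell$ already supplied by Lemma~\ref{lem:cylgradbound}). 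The paper avoids all of this with a cut-away reduction that is absent from your argument: after disposing of $\gamma^1_s$, it observes that the near-seam pieces of $\ell(\gamma_s^0)^2$, $\ell(\gamma_s^2)^2$ (over $t$-intervals of length $\eps$ adjacent to the seams) are bounded by $C\eps$ uniformly in $s$, purely from the $\cC^0$ gradient bound, so it suffices to prove the limit after restricting to a fixed interior interval such as $[1/2+\eps,1-\eps]$, which is bounded away from the seams. There the $\cC^m$ bounds come from ordinary interior elliptic estimates applied to translates of a single precompact domain pair, so the constants are $s$-independent for free; the quilted width-independent machinery enters only once, inside the proof of Lemma~\ref{lem:cylgradbound}, exactly as you describe in your last paragraph. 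If you incorporate this cut-away reduction, your Barbalat argument closes without needing second-derivative bounds at the seams.
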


\noindent The proof of Lemma~\ref{lem:lengthsgotozero} will use ideas from \cite{ah}.
The novel difficulty --- due to the conformal structure --- is to establish the following uniform gradient bound on \(|\d \ul v|\), the upper semicontinuous function defined by \begin{align} \label{eq:defenergydensity}
| \d \ul v| \colon (-\infty,0] \times \R/\Z \to [0,\infty), \qquad |\d \ul v(s,t)|^2 := |\d v_0(s,t)|^2 + |\d v_1(s,t)|^2 + |\d v_2(s,t)|^2,
\end{align}
where the functions \(|\d v_\ell(s,t)|\) are set to zero where they are not defined.

\begin{lemma}\label{lem:cylgradbound}
The gradient \(|\d \ul v|\) defined in \eqref{eq:defenergydensity} is uniformly bounded.
\end{lemma}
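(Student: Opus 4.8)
The plan is a bubbling-off argument that derives a contradiction from the Gromov Compactness Theorem of \cite{bw:compactness}, recorded below as Corollary~\ref{cor:rescale}. Suppose, for contradiction, that \(|\d\ul v|\) is not uniformly bounded. Each \(v_\ell\) is smooth on its domain \(V_\ell\), and on any truncation \(V_\ell\cap\{-C\le s\le 0\}\), which is compact, \(|\d v_\ell|\) is bounded; hence the failure of a uniform bound must occur near the puncture, and there is a sequence \((s^\nu,t^\nu)\) in the domain with \(s^\nu\to-\infty\) and \(R^\nu:=|\d\ul v(s^\nu,t^\nu)|\to\infty\).

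Near such a point the quilted surface is, in the \((s,t)\)-coordinates of \eqref{eq:defexpcoord}, a pair of half-strips carrying \(v_0\) and \(v_2\) separated by two strips carrying \(v_1\) of width \(2\theta(s^\nu)\to 0\): exactly a strip-shrinking configuration. After reparametrizing these shrinking strips to have a fixed width --- at the cost of replacing the standard complex structure by a nonstandard but \(\cC^k\)-controlled one, as described after \eqref{eq:8} --- I would apply Hofer's lemma to move \((s^\nu,t^\nu)\) slightly and to choose radii \(\delta^\nu\to 0\) with \(R^\nu\delta^\nu\to\infty\) and \(|\d\ul v|\le 2R^\nu\) on the \(\delta^\nu\)-ball, then rescale the maps by the factor \(R^\nu\). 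By Corollary~\ref{cor:rescale}, a subsequence of the rescaled quilts converges to a nonconstant finite-energy limit which is one of: a pseudoholomorphic sphere; a pseudoholomorphic disk with boundary on \(L_{01}\), on \(L_{12}\), or --- in its squashed-eight form --- on \(L_{01}\circ L_{12}\); or a nontrivial figure eight bubble. In every case the limit carries energy at least a universal quantum \(\hbar>0\), this energy quantization being part of the Gromov Compactness Theorem.

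Finally I would run the standard energy-concentration contradiction. The energy \(\hbar\) of the limit is at most the liminf of the energies of \(\ul v\) over regions \(U^\nu\) which, back in the coordinate \(z=e^{2\pi(s+it)}\), shrink to the puncture \(0\); hence \(U^\nu\subset B_\epsilon(0)\) for every fixed \(\epsilon>0\) once \(\nu\) is large, and the finite-energy identity in Definition~\ref{def:inverted8} forces the energy of \(\ul w\) on \(B_\epsilon(0)\) to be at least \(\hbar\) for every \(\epsilon>0\). Since \(\ul w\) has finite total energy this energy tends to \(0\) as \(\epsilon\to 0\), a contradiction. Therefore \(|\d\ul v|\) is uniformly bounded.

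I expect the main obstacle to be the bookkeeping in the rescaling step: one must split into the regimes \(\limsup R^\nu\theta(s^\nu)<\infty\) and \(R^\nu\theta(s^\nu)\to\infty\), identify the limiting domain accordingly (a plane, a half-plane with one seam, or a pair of half-planes with two seams --- i.e.\ a figure eight domain), and, crucially, check that the reparametrization straightening the shrinking strips keeps one within the hypotheses of the Gromov Compactness Theorem. This last point is precisely where the width-independent elliptic estimates allowing nonstandard complex structures, developed in \S\ref{sec:convergence}, are indispensable; granting them, the argument above is routine.
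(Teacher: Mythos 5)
Your overall strategy --- assume the gradient blows up, deduce energy concentration from the Gromov Compactness Theorem, contradict the finiteness of energy via the vanishing energy tail --- matches the paper's proof, and Corollary~\ref{cor:rescale} is indeed the right tool. But you never verify its hypothesis that the strip widths obediently shrink to zero (Definition~\ref{def:obedience}), which involves derivative-to-infimum bounds on $\theta^\nu(s) = \tfrac{1}{2\pi}\arcsin\bigl(\tfrac12 e^{2\pi(s+s^\nu)}\bigr)$ and $\cC^\infty$-convergence of their holomorphic extensions; this is a genuine, nontrivial hypothesis and the paper devotes Sublemma~\ref{sublem:obedient} to it. You also misread the corollary: it delivers only energy concentration $\hbar>0$ at the limit point, not a nonconstant rescaled limit or its classification into spheres, disks, and figure eights. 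The Hofer-lemma shifting, the rescaling by $R^\nu$, and the straightening to fixed width with nonstandard complex structure that you propose are all internal to the proof of the theorem in \cite{bw:compactness} and not needed here; the paper keeps $j=i$, lets the widths $\theta^\nu(s)$ vary, and uses the corollary as a black box. Fortunately, energy concentration alone at a sequence of points with $s^\nu\to-\infty$ already contradicts the tail estimate \eqref{eq:Eto0}, just as you observe.

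There is also a domain mismatch you do not address. Corollary~\ref{cor:rescale} is stated for squiggly strip quilts on a square $(-\rho,\rho)^2$ with a \emph{single} thin middle strip, whereas the cylindrical reparametrization $\ul v$ lives on a cylinder $(-\infty,0]\times\R/\Z$ with \emph{two} thin strips (around $t=\tfrac12$ and $t=0$). The paper bridges this by passing to the folded maps $u_\ell(s,t)=(v_\ell(s,t),v_\ell(s,\tfrac12-t))$ into $M_\ell\times M_\ell^-$, which collapses the two thin strips into one and converts the cylinder into a rectangle $(-\infty,0]\times[-\tfrac14,\tfrac14]$ with Lagrangian boundary conditions in $\Delta_{M_0},\Delta_{M_2}$; this in turn requires a case split, since a blowup at $t^\infty=\pm\tfrac14$ lands on the diagonal boundary and is handled by the classical boundary bubbling lemma \cite[Lemma~4.6.5]{ms:jh} rather than by Corollary~\ref{cor:rescale}. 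You would need either this fold, or an explicit local restriction to a window containing only the one thin strip near which the blowup occurs (together with a separate argument when the blowup is deep inside $V_0$ or $V_2$); either way, the reduction to the squiggly strip quilt framework needs to be made explicit.
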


\begin{figure}
\centering
\def\svgwidth{\columnwidth}
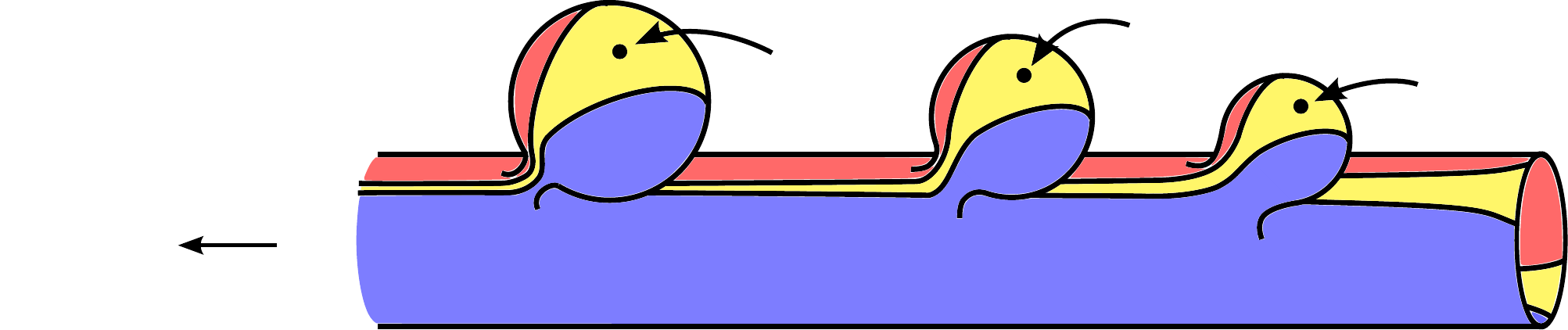
\caption{To prove Lemma~\ref{lem:cylgradbound}, we assume that the cylindrical reparametrizations \(v_\ell\) do not have uniformly bounded gradient, then bubble off a nonconstant quilted map.
In this illustration, the bubbled-off map is a figure eight bubble.
}
\label{fig:gradbound}
\end{figure}

\noindent
We will prove Lemma~\ref{lem:cylgradbound} below.
For now, we sketch the proof.
It proceeds by contradiction: if \(|\d v_\ell|\) is not bounded for some \(\ell\), then there is a sequence of points \((s^\nu,t^\nu)\) (necessarily with \(s^\nu \to -\infty\)) at which \(|\d v_\ell|\) diverges.
Rescaling at these points produces a nonconstant quilted map, as illustrated in Figure~\ref{fig:gradbound}, but this contradicts the finite-energy hypothesis on \(\ul v\).
The technical input is the Gromov Compactness Theorem in \cite{bw:compactness}, a consequence of which we record as Theorem~\ref{cor:rescale}.
This theorem is needed to deduce that the rescaled maps actually converge.
In order to state it, we need to define the domains of the maps and a controlled fashion in which the strip-width can tend to zero.

The following definition is the only instance in \S\ref{sec:remsing} where we allow the almost complex structures to be domain-dependent, so that the notion of a squiggly strip quilt is flexible enough to be used in \S\ref{sec:convergence}.
\begin{definition}
Fix \(\rho>0\), a real-analytic function \(f\colon [-\rho,\rho] \to (0, \rho/2]\), domain-dependent compatible almost complex structures \(J_\ell\colon [-\rho,\rho]^2 \to \J(M_\ell,\om_\ell)\), \(\ell \in \{0,1,2\}\), and a complex structure \(j\) on \([-\rho,\rho]^2\).
\begin{itemize}
\item A {\bf \(\mathbf{(J_0,J_1,J_2,j)}\)-holomorphic size-\(\mathbf{(f,\rho)}\) squiggly strip quilt for \(\mathbf{(L_{01},L_{12})}\)} is a triple of smooth maps
\begin{align} \label{eq:squigglymaps}
 \ul{v}=\left(
\begin{aligned}
v_0\colon \{(s,t) \in (-\rho,\rho)^2 \: | & \; t \leq -f(s) \} \to M_0 \\ 
v_1\colon \{(s,t) \in (-\rho,\rho)^2\: | &  \;  |t | \leq f(s) \} \to M_1 \\ 
v_2\colon \{(s,t) \in (-\rho,\rho)^2\: | & \; t \geq f(s) \} \to M_2 
\end{aligned}
\right) 
\end{align}
that fulfill the seam conditions
\begin{align} \label{eq:squigglyseams}
\bigl(v_0(s,-f(s)),v_1(s,-f(s))\bigr)\in L_{01}, \qquad \bigl(v_1(s,f(s)), v_2(s,f(s))\bigr)\in L_{12} \qquad \forall \: s \in (-\rho,\rho),
\end{align}
satisfy the Cauchy--Riemann equations
\begin{align} \label{eq:squigglyCR}
\d v_\ell(s,t) \circ j(s,t) - J_\ell(s,t,v_\ell(s,t)) \circ \d v_\ell(s,t) = 0 \qquad \forall\: \ell \in \{0,1,2\}
\end{align}
for \((s,t)\) in the relevant domains, 
and have finite energy
\begin{align*}
E(\ul{v}) \,:= \; \tint v_0^*\om_0 + \tint v_1^*\om_1 + \tint v_2^*\om_2  \; < \;\infty .
\end{align*}

\item A {\bf \(\mathbf{(J_0,J_2,j)}\)-holomorphic size-\(\rho\) degenerate strip quilt for \(\mathbf{L_{01}\times_{M_1}  L_{12}}\) with singularities} is a triple of smooth maps
\begin{align} \label{eq:degenmaps}
\ul{v}=\left(
\begin{aligned}
v_0\colon & \, (-\rho,\rho)\times(-\rho,0] \;\less\; S\times\{0\} \to M_0 \\ 
v_1\colon & \, (-\rho,\rho) \;\less\; S \to M_1 \\
v_2\colon & \, (-\rho,\rho)\times[0,\rho) \;\less\; S\times\{0\} \to M_2 
\end{aligned}
\right)
\end{align}
defined on the complement of a finite set \(S\subset\R\) that fulfill the lifted seam condition
\begin{align} \label{eq:degenseams}
\bigl( v_0(s,0), v_1(s), v_1(s), v_2(s,0) \bigr) \in  
L_{01}\times_{M_1}  L_{12}
\qquad\forall \: s\in(-\rho,\rho)\less S,
\end{align}
satisfy the Cauchy--Riemann equation \eqref{eq:squigglyCR} for \(\ell \in \{0,2\}\) and
\((s,t)\) in the relevant domains, and have finite energy 
\begin{align*}
E(\ul{v}) \, := \; \tint v_0^*\om_0 + \tint v_2^*\om_2 \;<\; \infty.
\end{align*}
\end{itemize}
\end{definition}
\noindent When \(j\) is the standard complex structure \(i\colon \partial_s \mapsto \partial_t, \: \partial_t \mapsto -\partial_s\), the Cauchy--Riemann equation \eqref{eq:squigglyCR} can be expressed in coordinates as: \begin{align*}
\partial_tv_\ell(s,t) - J_\ell(s,t,v_\ell(s,t)) \partial_s v_\ell(s,t) = 0.
\end{align*}

\noindent The novel hypothesis necessary for a sequence of squiggly strip quilts of widths \((f^\nu)_{\nu\in\N}\) to converge \(\cC^\infty_\loc\) away from the gradient blow-up points is that the widths ``obediently shrink to zero'':

\begin{definition} \label{def:obedience}
Fix \(\rho>0\).
A sequence \(\bigl( f^\nu \bigr)_{\nu\in\N}\) of real-analytic functions \(f^\nu\colon [-\rho,\rho]\to (0,\rho/2]\)
\textbf{obediently shrinks to zero},
\(\mathbf{f^\nu \Rightarrow 0}\),
if
\(\max_{s\in[-\rho,\rho]} f^\nu(s) \underset{\nu\to\infty}{\longrightarrow} 0\)
and
\begin{align*}
\sup_{\nu\in\N}\; \frac{\max_{s\in[-\rho,\rho]} \bigl| \tfrac{\rd^k}{\rd s^k}f^\nu(s) \bigr|}{\min_{s\in[-\rho,\rho]} f^\nu(s)} =: C_k <\infty  \qquad\forall \: k\in\N,
\end{align*}
and in addition there are holomorphic extensions \(F^\nu\colon [-\rho,\rho]^2 \to \C\) of \(f^\nu(s) = F^\nu(s,0)\) such that \((F^\nu)\) converges \(\cC^\infty\) to zero.
\end{definition}

The key to the following special case of the Gromov Compactness Theorem from \cite{bw:compactness} is a collection of width-independent elliptic estimates proven in \S\ref{sec:convergence} for the linearized Cauchy--Riemann operator.
Those elliptic estimates allow for a nonstandard domain complex structure, which is necessary in order to allow widths \(f^\nu\) that are not constant in \(s\).

\begin{corollary}[consequence of Gromov Compactness Theorem, \cite{bw:compactness}] \label{cor:rescale}
Fix \(\rho>0\), a sequence \((f^\nu\colon [-\rho,\rho] \to (0,\tfrac \rho 2])\) of real-analytic functions shrinking obediently to zero, and a sequence \((\ul{v}^\nu)_{\nu\in\N}\) of \((J_0,J_1,J_2,i)\)-holomorphic size-\((f^\nu,\rho)\) squiggly strip quilts for \((L_{01},L_{12})\) of bounded energy \(E := \sup_{\nu \in \N} E(\ul v^\nu) < \infty\).

If \((s^\nu,t^\nu) \to (s^\infty,t^\infty) \in (-\rho,\rho)^2\) is a sequence of points where the gradient blows up, i.e.\ \begin{align*}
\limsup_{\nu\to\infty} |\ul v^\nu|(s^\nu,t^\nu) = \infty,
\end{align*}
then there must be a concentration of energy \(\hbar>0\) at \((s^\infty,t^\infty)\), i.e.\ radii \(r^\nu\to 0\) such that:
\begin{align*}
\liminf_{\nu\to\infty} \tint_{B_{r^\nu}(s^\infty,t^\infty)} \tfrac 1 2|\d\ul{v}^\nu|^2 \;>\;  0.
\end{align*}
\end{corollary}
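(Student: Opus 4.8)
The plan is to run the standard hard-rescaling (``bubbling-off'') argument, the only novelty being that the object that bubbles off at the blow-up point may be a pseudoholomorphic sphere, a disk with boundary on \(L_{01}\) or \(L_{12}\), or --- when the blow-up point lies on the collapsing middle strip --- a figure eight bubble (or degenerate strip quilt) between \(L_{01}\) and \(L_{12}\). In each case the bubble is nonconstant and of finite energy, hence carries at least a fixed energy quantum \(\hbar>0\), and a portion of that energy is visible in arbitrarily small balls around \((s^\infty,t^\infty)\) in the original sequence. The \(\cC^\infty_\loc\)-convergence of the rescaled maps in the delicate strip-collapsing case is precisely the output of the Gromov Compactness Theorem of \cite{bw:compactness}, whose analytic core is the width-independent elliptic estimates of \S\ref{sec:convergence}; accordingly, most of the argument below is the bookkeeping that reduces Corollary~\ref{cor:rescale} to that theorem.

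First I would pass to a subsequence along which \(|\ul v^\nu|(s^\nu,t^\nu)\to\infty\) and apply Hofer's Lemma (cf.\ \cite{ms:jh}) to the functions \(|\ul v^\nu|\). This yields new points \((\tilde s^\nu,\tilde t^\nu)\to(s^\infty,t^\infty)\), radii \(\delta^\nu\to0\), and scales \(R^\nu:=|\ul v^\nu|(\tilde s^\nu,\tilde t^\nu)\to\infty\) with \(\delta^\nu R^\nu\to\infty\) and \(\sup_{B_{\delta^\nu}(\tilde s^\nu,\tilde t^\nu)}|\ul v^\nu|\le 2R^\nu\). I then set \(\ul{\hat v}^\nu(\sigma,\tau):=\ul v^\nu\bigl(\tilde s^\nu+\tfrac\sigma{R^\nu},\,\tilde t^\nu+\tfrac\tau{R^\nu}\bigr)\). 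Because a real dilation fixes the standard complex structure \(i\) and the \(J_\ell\) are domain-independent here, each \(\ul{\hat v}^\nu\) is again \((J_0,J_1,J_2,i)\)-holomorphic; it satisfies \(|\d\ul{\hat v}^\nu|(0,0)=1\) and \(|\d\ul{\hat v}^\nu|\le 2\) on the exhausting balls \(B_{\delta^\nu R^\nu}(0,0)\), and \(E(\ul{\hat v}^\nu)=E(\ul v^\nu)\le E\) by conformal invariance of the energy. The rescaled seams lie along \(\tau=R^\nu\bigl(\pm f^\nu(\tilde s^\nu+\tfrac\sigma{R^\nu})-\tilde t^\nu\bigr)\); a short computation shows that the derivative-to-minimum ratios of the rescaled width functions are \((R^\nu)^{-k}C_k\to 0\), so after recentering the widths again shrink obediently in the sense of Definition~\ref{def:obedience} (in the case \(R^\nu f^\nu(\tilde s^\nu)\to 0\)), converge to a positive constant, or escape to infinity --- in the last situation one or both seams disappear in the limit and the relevant sub-quilt degenerates to a quilt with a single seam, or to a single patch.

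Next I would feed the uniform \(\cC^0\)-bound \(|\d\ul{\hat v}^\nu|\le 2\) into the width-independent elliptic estimates of \S\ref{sec:convergence} (equivalently, invoke the Gromov Compactness Theorem of \cite{bw:compactness} directly) to obtain uniform \(\cC^k_\loc\)-bounds, and hence, after passing to a subsequence, \(\cC^\infty_\loc\)-convergence to a nonconstant finite-energy limit \(\ul{\hat v}^\infty\) --- a pseudoholomorphic sphere in some \(M_\ell\), a disk with boundary on \(L_{01}\) or \(L_{12}\), or a figure eight bubble/degenerate strip quilt between \(L_{01}\) and \(L_{12}\). Nonconstancy holds because \(|\d\ul{\hat v}^\infty|(0,0)=1\). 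Let \(\hbar>0\) be the minimum of the energy quanta of nonconstant objects of these three types --- positive for spheres and disks by the usual energy-quantization (\(\ep\)-regularity) argument on the closed \(M_\ell\) and compact \(L_{01},L_{12}\), and for figure eight bubbles by \cite{bw:compactness} --- so that \(E(\ul{\hat v}^\infty)\ge\hbar\). Choosing \(\rho_0\) so large that \(\tint_{B_{\rho_0}(0,0)}\tfrac12|\d\ul{\hat v}^\infty|^2>\tfrac\hbar2\), the \(\cC^\infty_\loc\)-convergence gives \(\tint_{B_{\rho_0}(0,0)}\tfrac12|\d\ul{\hat v}^\nu|^2>\tfrac\hbar2\) for all large \(\nu\); undoing the conformal rescaling, \(\tint_{B_{\rho_0/R^\nu}(\tilde s^\nu,\tilde t^\nu)}\tfrac12|\d\ul v^\nu|^2>\tfrac\hbar2\). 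With \(r^\nu:=\rho_0/R^\nu+\bigl|(\tilde s^\nu,\tilde t^\nu)-(s^\infty,t^\infty)\bigr|\to 0\) one has \(B_{\rho_0/R^\nu}(\tilde s^\nu,\tilde t^\nu)\subseteq B_{r^\nu}(s^\infty,t^\infty)\), whence \(\liminf_{\nu\to\infty}\tint_{B_{r^\nu}(s^\infty,t^\infty)}\tfrac12|\d\ul v^\nu|^2\ge\tfrac\hbar2>0\), which is the assertion.

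The main obstacle is the case in which the blow-up point sits on the collapsing middle strip: there the rescaled objects are genuine squiggly strip quilts with shrinking, \(s\)-dependent widths, and once one reparametrizes away the non-constant width the domain complex structure becomes nonstandard --- so neither the standard mean-value/\(\ep\)-regularity argument nor ordinary Gromov compactness for quilts applies, and one must use the full strength of \cite{bw:compactness}. This is precisely why the corollary is stated as a consequence of that theorem rather than proved in isolation; the remaining subtleties are the (routine but slightly delicate) verification that the obedience hypothesis of Definition~\ref{def:obedience} is preserved --- indeed improved --- under the rescaling, and the identification of which of the three bubble types actually occurs in terms of the limiting behaviour of \(R^\nu f^\nu(\tilde s^\nu)\) and of \(R^\nu\,\mathrm{dist}\bigl((\tilde s^\nu,\tilde t^\nu),\{t=\pm f^\nu(s)\}\bigr)\).
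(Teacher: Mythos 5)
The paper does not actually supply a proof of this corollary: it is stated as a special case of the Gromov Compactness Theorem from \cite{bw:compactness}, with the prose before the statement explaining that the elliptic estimates of \S\ref{sec:convergence} are the analytic core of that theorem. Your sketch is a faithful outline of the standard hard-rescaling argument that produces that theorem, so you are in effect reproducing the intended source rather than deviating from it.

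A couple of small remarks on the details. Your computation that the derivative-to-minimum ratios of the rescaled widths decay like \((R^\nu)^{-k}\) is right, but the recentering deserves a little more care: the rescaled seams sit at \(\tau=R^\nu\bigl(\pm f^\nu(\tilde s^\nu+\sigma/R^\nu)-\tilde t^\nu\bigr)\), so whether one re-enters the obedient-shrinking regime, lands on a positive-width degenerate quilt, or loses a seam to infinity is governed not just by \(R^\nu f^\nu(\tilde s^\nu)\) but also by \(R^\nu\tilde t^\nu\); you do flag the latter via the distance \(R^\nu\,\mathrm{dist}\bigl((\tilde s^\nu,\tilde t^\nu),\{t=\pm f^\nu(s)\}\bigr)\), which is the right invariant. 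Also note that the obedience hypothesis requires holomorphic extensions \(F^\nu\) converging \(\cC^\infty\) to zero; this is again preserved under conformal rescaling for the same scaling reason, but it should be checked explicitly if one were to write this out in full. None of this is a gap in the plan --- it is exactly the bookkeeping you defer to \cite{bw:compactness}, and since the corollary is stated in the paper precisely as a consequence of that theorem, deferring is appropriate.
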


We are finally in a position to bound the gradients of the reparametrized maps \(v_\ell\) from \eqref{eq:defexpcoord}.

\begin{proof}[Proof of Lemma~\ref{lem:cylgradbound}]
We will prove the equivalent statement that the ``folded maps''
\begin{align*}
u_\ell\colon U_\ell \to M_\ell \times M_\ell^-, \qquad u_\ell(s, t) := (v_\ell(s, t), v_\ell(s, \tfrac 1 2 - t)) \quad \text{for} \quad \ell = 0,1,2
\end{align*}
have uniformly-bounded gradients, where the domains \(U_\ell\) are given by 
\begin{gather*}
U_0 := \{ (s,t) \: | \: s \leq 0, \: -\tfrac 1 4 \leq t \leq -\theta(s)\}, \qquad 
U_2 := \{ (s,t) \: | \: s \leq 0, \: \theta(s) \leq t \leq \tfrac 1 4\}, \\
U_1 := \{ (s,t) \: | \: s \leq 0, \: -\theta(s) \leq t \leq \theta(s)\}.
\end{gather*}
These maps are pseudoholomorphic with respect to the almost complex structures \(\wh J_\ell := J_\ell \oplus (-J_\ell)\) and satisfy the following boundary and seam conditions for \(s\leq 0\):
\begin{gather*}
u_0(s, -\tfrac1 4) \in \Delta_{M_0}, \qquad (u_0(s, -\theta(s) ), u_1(s,-\theta(s)) ) \in (L_{01} \times L_{01})^T, \\
\;\;\, u_2(s, \tfrac 1 4 ) \in \Delta_{M_2}, \qquad (u_1(s,\theta(s)), u_2(s,\theta(s))) \in (L_{12} \times L_{12})^T .
\end{gather*}
(Here \(\theta(s) = \tfrac 1 {2\pi}\arcsin(\tfrac 1 2e^{2\pi s})\) as in \eqref{eq:theta}, and \((L_{ij} \times L_{ij})^T\) is the image of \(L_{ij} \times L_{ij}\) under the permutation \((x_i,x_j,y_i,y_j) \mapsto (x_i,y_i,x_j,y_j)\).)
Finiteness of the energy of the inverted figure eight \(\ul w\) translates into convergence of the integral \(\lim_{S\to-\infty}\tint_{(S, 0] \times [-1/4, 1/4]} \tfrac 1 2|\d \ul u|^2 <\infty\) of the energy density
\begin{align*}
|\d \ul u|\colon (-\infty, 0] \times \left[-\tfrac 1 4, \tfrac 1 4\right] \to [0, \infty) , \qquad
|\d \ul u(s, t)|^2 := |\d u_0(s, t)|^2 + |\d u_1(s, t)|^2 + |\d u_2(s, t)|^2,
\end{align*}
where the functions \(|\d u_\ell(s,t)|\) are set to zero where they are not already defined (so \(|\d \ul u|\) is upper semi-continuous).
This convergence in particular implies
\begin{equation}\label{eq:Eto0}
\tint_{(-\infty, S] \times [-1/4, 1/4]} \tfrac 1 2|\d \ul u|^2 \;\underset{S\to-\infty}{\longrightarrow} \; 0 .
\end{equation}

Now assume for a contradiction that there exists a sequence \((s^\nu, t^\nu) \in (-\infty, 0] \times \left[-1/4, 1/4\right]\) with \(|\d \ul u(s^\nu, t^\nu)| \to \infty\).
Since the \(u_\ell\) are smooth, this is possible only for \(s^\nu\to-\infty\); passing to a further subsequence, we may in fact assume \(s^{\nu+1} \leq s^\nu - 1\) and \(s^1 \leq 1/4\).
Depending on whether \(t^\infty\) is \(\pm 1/4\) or is contained in \((-1/4,1/4)\), we derive a contradiction to \eqref{eq:Eto0}:

\medskip

\noindent \(\mathbf{t^\infty = \pm 1/4.}\) Assume \(t^\infty = -1/4\); the \(t^\infty = 1/4\) case can be treated in analogous fashion.
Define a sequence \((u_0^\nu)\) by: \begin{align*}
u_0^\nu\colon B_{1/8}(0) \cap \H \to M_0 \times M_0^-, \qquad u_0^\nu(s,t) := u_0(s + s^\nu, t - 1/4).
\end{align*}
The map \(u_0^\nu\) is \(\wh J_0\)-holomorphic and satisfies the Lagrangian boundary condition \(u_0(s,0) \in \Delta_{M_0}\) for \(s \in (-1/8,1/8)\).
Furthermore, \(|\d u_0^\nu(0, t^\nu + 1/4)| \to \infty\), \(t^\nu + 1/4 \to 0\) by assumption, and the energy of \(u_0^\nu\) is bounded by the energy of \(\ul v\), so \cite[Lemma~4.6.5]{ms:jh} implies the inequality \(\liminf_{\nu \to \infty} \tint_{B_{1/8}(0)} \tfrac 1 2|\d u_0^\nu|^2 > 0\), which contradicts \eqref{eq:Eto0}.

\medskip

\noindent \(\mathbf{t^\infty \in (-1/4,1/4).}\) Define a sequence \((u_0^\nu,u_1^\nu,u_2^\nu)\) of \((\wh J_0, \wh J_1, \wh J_2, i)\)-holomorphic size-\((\theta^\nu,\tfrac 1 4)\) squiggly strip quilts, with \begin{align*}
\theta^\nu\colon [-\tfrac 1 4,\tfrac 1 4] \to (0, \tfrac 1 8], \qquad \theta^\nu(s) := \tfrac 1 {2\pi}\arcsin(\tfrac 1 2e^{2\pi (s + s^\nu)}),
\end{align*}
by: \begin{align*}
u_\ell^\nu(s,t) := u_\ell(s + s^\nu, t).
\end{align*}
The energy \(\tint_{B_{1/8}(0)} \tfrac 1 2|\d \ul u^\nu|^2\) is bounded by the energy of \(\ul v\), and by assumption, the gradient \(|\d \ul u^\nu(0, t^\nu)|\) tends to \(\infty\).
In the following sublemma we establish the last hypothesis needed to apply Corollary~\ref{cor:rescale}.

\begin{sublemma} \label{sublem:obedient}
The functions \(\theta^\nu(s) = \tfrac 1 {2\pi}\arcsin(\tfrac 1 2e^{2\pi(s + s^\nu)})\) obediently shrink to zero as \(\nu \to \infty\).
\end{sublemma}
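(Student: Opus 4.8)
The plan is to verify, one at a time, the three conditions in Definition~\ref{def:obedience} of ``obediently shrinks to zero'' (the relevant $\rho$ here being $\tfrac14$, so that the domain of the $\theta^\nu$ is $[-\tfrac14,\tfrac14]$). All three become elementary once one notices that the only non-polynomial ingredient, $\arcsin$, is evaluated at an argument whose size tends to $0$ with $\nu$. Set $\delta^\nu := e^{2\pi s^\nu}$, so $\delta^\nu\to 0^+$ since $s^\nu\to-\infty$, and write $\theta^\nu(s) = \tfrac1{2\pi}\arcsin\bigl(h^\nu(s)\bigr)$ with $h^\nu(s) := \tfrac12\delta^\nu e^{2\pi s}$. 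For $s\in[-\tfrac14,\tfrac14]$ we have $e^{2\pi s}\in[e^{-\pi/2},e^{\pi/2}]$, so $h^\nu(s)\in[\tfrac12\delta^\nu e^{-\pi/2},\,\tfrac12\delta^\nu e^{\pi/2}]$; in particular for all large $\nu$ this argument lies in $(0,\tfrac12)$, so $\theta^\nu$ is real-analytic, increasing, and valued in $(0,\tfrac18]$. Since obedient shrinking depends only on the tail of the sequence, I would simply discard the finitely many smaller indices.

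The first condition is immediate: $\theta^\nu$ being increasing, $\max_s\theta^\nu(s) = \theta^\nu(\tfrac14) = \tfrac1{2\pi}\arcsin(\tfrac12\delta^\nu e^{\pi/2})\to 0$ as $\delta^\nu\to 0$ (and $\min_s\theta^\nu(s)=\theta^\nu(-\tfrac14)$). The second condition, the uniform derivative ratios, is the only step with content. The structural point is that $h^\nu$ solves the linear ODE $(h^\nu)'=2\pi h^\nu$, so $s$-differentiation never leaves $h^\nu$ behind: by induction on $k$, $\partial_s^k\theta^\nu(s) = \sum_{j=1}^{k} a_{k,j}\,\arcsin^{(j)}\!\bigl(h^\nu(s)\bigr)\,h^\nu(s)^j$ for constants $a_{k,j}$ depending only on $k,j$. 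On $[0,\tfrac12]$ each $\arcsin^{(j)}$ is bounded by some $M_j<\infty$, and $h^\nu(s)^j\le h^\nu(s)$ since $0<h^\nu(s)<1$; hence $|\partial_s^k\theta^\nu(s)|\le B_k\,h^\nu(s)$ with $B_k:=\sum_j|a_{k,j}|M_j$ depending only on $k$. Combined with $\theta^\nu(s)\ge\tfrac1{2\pi}h^\nu(s)$ (from $\arcsin w\ge w$ on $[0,1]$), this gives $\max_s|\partial_s^k\theta^\nu|\,/\,\min_s\theta^\nu \le 2\pi B_k\cdot\bigl(\max_s h^\nu / \min_s h^\nu\bigr) = 2\pi B_k e^{\pi} =: C_k<\infty$, uniformly in $\nu$, precisely because the $\nu$-dependent factor $\tfrac12\delta^\nu$ cancels out of the ratio $\max_s h^\nu/\min_s h^\nu=e^{\pi}$.

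For the third condition I would take the evident holomorphic extension $F^\nu(z):=\tfrac1{2\pi}\arcsin\bigl(\tfrac12\delta^\nu e^{2\pi z}\bigr)$, which restricts to $\theta^\nu$ on $\R$ and, for $\nu$ large, is holomorphic on a neighborhood of the square $[-\tfrac14,\tfrac14]^2\subset\C$, since there $\bigl|\tfrac12\delta^\nu e^{2\pi z}\bigr|\le\tfrac12\delta^\nu e^{\pi/2}<1$ stays inside the disk of holomorphy of $\arcsin$. Rerunning the computation above with complex argument (the identity $w'(z)=2\pi w(z)$ for $w(z)=\tfrac12\delta^\nu e^{2\pi z}$ still holds) gives $|\partial_z^k F^\nu(z)|\le B_k\bigl|\tfrac12\delta^\nu e^{2\pi z}\bigr|\le B_k e^{\pi/2}\delta^\nu\to 0$ uniformly on the square for each $k$; since $F^\nu$ is holomorphic, $\partial_s^a\partial_t^b F^\nu = i^{\,b}\partial_z^{a+b}F^\nu$ tends to $0$ uniformly as well, i.e.\ $F^\nu\to 0$ in $\cC^\infty$. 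This verifies all three conditions, hence $\theta^\nu\Rightarrow 0$.

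The ``hard part'' is essentially bookkeeping, concentrated in the second condition: one must pin down that every $s$-derivative of $\theta^\nu$ carries exactly one surviving factor of $h^\nu$ (because $h^\nu$ satisfies a linear ODE), so that the numerator of the ratio is controlled by $\max_s h^\nu$ up to a constant depending only on the order, while the denominator $\min_s\theta^\nu$ is $\ge\tfrac1{2\pi}\min_s h^\nu$; then the two copies of the scale $\tfrac12\delta^\nu$ cancel and leave a $\nu$-independent constant. Everything else follows at once from $\delta^\nu\to 0$.
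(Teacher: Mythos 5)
Your proof is correct and follows essentially the same three-step plan as the paper's, verifying each condition in Definition~\ref{def:obedience} in turn. The only stylistic difference is in the second condition: the paper works out an explicit decomposition of $\tfrac{\d^k\theta}{\d s^k}$ into functions $f_\ell(s) = (4-e^{4\pi s})^{-(\ell-1/2)}e^{4\pi(\ell-1/2)s}$ and bounds each ratio $\max|f_\ell|/\min\theta^\nu$ by hand, whereas you observe that because $h^\nu$ satisfies the linear ODE $(h^\nu)'=2\pi h^\nu$, every $s$-derivative of $\theta^\nu$ has the form $\sum_j a_{k,j}\arcsin^{(j)}(h^\nu)(h^\nu)^j$ with exactly one surviving factor of $h^\nu$; this is a slightly cleaner way of organizing the same cancellation (the $\nu$-dependent scale $\tfrac12 e^{2\pi s^\nu}$ drops out of the ratio), but the underlying mechanism and the resulting estimate are the same.
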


\begin{proof}[Proof of Sublemma~\ref{sublem:obedient}]
The convergence \(s^\nu \to -\infty\) implies \(\tfrac 1 2e^{2\pi(s + s^\nu)} \to 0\) in \(\cC^0\), so the equality \(\arcsin(0) = 0\) implies the \(\cC^0\)-convergence of \(\theta^\nu\) to zero.

To check the second condition for obedient shrinking, fix \(k \geq 1\) and note that \(\tfrac {\d^k\theta^\nu}{\d s^k}(s) = \tfrac {\d^k\theta}{\d s^k}(s + s^\nu)\), with \(\theta(s) = \tfrac 1 {2\pi}\arcsin(\tfrac 1 2e^{2\pi s})\) as above.
The derivative \(\tfrac {\d^k\theta}{\d s^k}(s)\) is a linear combination of the functions \(f_\ell(s) := (4 - e^{4\pi s})^{-(\ell-1/2)}e^{4\pi(\ell - 1/2)s}\) for \(\ell = 1, \ldots, m\).
(This can be seen by induction starting from \(\tfrac {\d \theta(s)}{\d s} = (4 - e^{4\pi s})^{-1/2}e^{2\pi s}\).)
This decomposition, the inequality \(\arcsin(x) \geq x\) for \(x \in [0,1]\), and the convergence \(s^k \to -\infty\) allows us to establish the second condition: \begin{align*}
\sup_{\nu \in \N} \frac {\max_{s \in [-1/4,1/4]} |f_\ell(s)|} {\min_{s \in [-1/4,1/4]} \theta^\nu(s)} &\leq \sup_{\nu \in \N} \frac {\exp(4\pi(\ell-\tfrac 1 2)(s^\nu + 1/4))} {\tfrac 1 {4\pi}\exp(2\pi(s^\nu-1/4))} \\
&= \sup_{\nu \in \N} 4\pi\exp(4\pi((\ell-1)s^\nu + \tfrac 1 4)) \\
&\leq 4\pi\exp(\pi).
\end{align*}

The arcsine function extends to a holomorphic function \(\arcsin\colon \ol B_1(0) \to \C\) by the power series \(\arcsin(z) := \sum_{k=0}^\infty \tfrac {{2k \choose k}z^{2k+1}} {4^k(2k+1)}\), so \(f^\nu\) extends to a holomorphic function \(F^\nu\) from \([-1/4,1/4]^2\) to \(\C\).
Since the functions \(\tfrac 1 2e^{2\pi(z + s^\nu)}\) tend \(\cC^\infty\) to zero and since \(\arcsin(0) = 0\), the extensions \(F^\nu\) also tend \(\cC^\infty\) to zero.
\end{proof}

\noindent Part (2) of Corollary~\ref{cor:rescale} now implies the inequality \(\liminf_{\nu\to\infty} \tint_{B_{1/8}(0)} \tfrac 1 2|\d \ul u^\nu|^2 > 0\), which contradicts \eqref{eq:Eto0}.
\end{proof}

\begin{proof}[Proof of Lemma~\ref{lem:lengthsgotozero}]
First, note that the domain \([1/2 - \theta(s), 1/2 + \theta(s)] \cup [1 - \theta(s), 1 + \theta(s)]\) of \(\gamma_s^1\) has total length \(4\theta(s) = \tfrac 2 {\pi} \arcsin\bigl( \tfrac 1 2 e^{2\pi s} \bigr)\), which converges to \(0\) as \(s \to -\infty\).
Hence the gradient bounds of Lemma~\ref{lem:cylgradbound} immediately imply that the \(L^2\)-length of \(\gamma_s^1\) converges to zero as \(s \to -\infty\).
Moreover, these gradient bounds imply that to show the \(L^2\)-lengths of \(\gamma_s^0, \gamma_s^2\) converge to zero, it suffices to fix an arbitrary \(\eps > 0\) and show that the \(L^2\)-lengths of \(\gamma_s^0|_{[\eps,1/2-\eps]}, \gamma_s^2|_{[1/2+\eps, 1-\eps]}\) converge to zero as \(s \to -\infty\).

Fix \(\eps>0\).
We will show that the \(L^2\)-length of \(\gamma_s^0|_{[1/2+\eps,1-\eps]}\) converges to zero as \(s \to -\infty\); the proof for \(\gamma_2\) is similar.
Choose \(s_0\) so that the domain of \(\gamma_s^0\) contains \([1/2+\eps/2, 1-\eps/2]\) for all \(s\leq s_0\).
Now the \(\cC^0\)-bound on \(|\d v_0|\) from Lemma~\ref{lem:cylgradbound} induces a \(\cC^m\)-bound on \(v_0|_{(-\infty, s_0 - 1] \times [1/2+\eps, 1-\eps]}\) for any \(m \geq 0\).
Indeed, we can apply the interior elliptic estimates (e.g.\ \cite[\S6.3]{ah}) on each of the precompactly-nested domains \begin{align*}
[s_0 - k - 1, s_0 - k] \times [1/2+\eps,1-\eps] \;\subset\; [s_0 - k - 2, s_0 - k + 1] \times [1/2+\eps/2,1-\eps/2]
\end{align*}
for \(k\in\N\).
Since for different \(k\) these domains are translations of one other, the constants in the elliptic estimates are independent of \(k\), and thus yield the desired \(\cC^m\)-bounds.

For \(s \leq s_0\), define 
\begin{align*}
\Phi(s) := \tfrac 1 2\tint_{-\infty}^s \tint_{1/2+\eps}^{1-\eps} \left(|\partial_s v_0|^2 + |\partial_t v_0|^2\right).
\end{align*}
Then \(\Phi\colon (-\infty, s_0]\to[0, \infty)\) is nondecreasing with \(\lim_{s \to -\infty} \Phi(s) = 0\) and 
\begin{align*}
\Phi'(s) &= \tfrac 1 2\int_{1/2+\eps}^{1-\eps} (|\partial_s v_0(s, \tau)|^2 + |\partial_t v_0(s, \tau)|^2) \,\d \tau = \int_{1/2+\eps}^{1-\eps} |\partial_s v_0(s, \tau)|^2 \,\d \tau, \\
\Phi''(s) &= 2\int_{1/2+\eps}^{1-\eps} \langle \partial_s v_0(s, \tau), \nabla_{\text{LC}, s}^2v_0(s, \tau)\rangle \,\d \tau,
\end{align*}
where in the last quantity we are using the Levi-Civita connection with respect to the metric \(g_0\) defined in \eqref{eq:metrics}.
By the previous paragraph, there exists a constant \(c > 0\) so that \(\Phi''(s) \leq c\) for all \(s \leq s_0 - 1\).
Now for any fixed \(\delta > 0\) we can choose \(s_1 \leq s_0 - 1\) such that \(\Phi(s_1) \leq \delta^2/4c\).
For \(s \leq s_1\), we obtain:
\begin{align*}
\frac {\delta^2} {4c} 
\;\geq\; 
\Phi(s_1)
\;\geq\; 
\Phi(s) - \Phi(s- \tfrac \delta {2c})
\;=\; 
\int_{s - \delta/2c}^s \Phi'(\sigma) \,\d \sigma 
\;\geq\; 
 \frac \delta {2c}(\Phi'(s) - \tfrac \delta 2),
\end{align*}
where the last step uses the bound on  \(\Phi''\) to deduce \(\Phi'(\sigma) \geq \Phi'(s) - c |s-\sigma|\).
This inequality can be rearranged to yield \(\Phi'(s) \leq \delta\) for all \(s \leq s_1\), and thus proves \(\lim_{s \to -\infty} \Phi'(s) = 0\).
Since \(\Phi'(s)\) is equal to \(\|\tfrac \d {\d t} \gamma_s^0\|^2_{L^2([1/2+\eps,1-\eps])}\) and since \(|\tfrac \d {\d t}\gamma_s^0|\) is uniformly bounded, we have now shown that the \(L^2\)-norm of \(\gamma^0_s\) converges to zero as \(s \to -\infty\).

The Cauchy--Schwarz inequality implies that the \(L^1\)-norm of \(\tfrac \d {\d t}\gamma_0^s\) --- i.e.\ the length \(\ell(\gamma_1^s)\) --- also tends to zero as \(s \to -\infty\).
\end{proof}

\subsection{An isoperimetric inequality and the proof of removal of singularity}\label{ss:isoineq}

In this subsection, we prove Theorem~\ref{thm:remsing}.
The crucial inputs will be Lemma~\ref{lem:lengthsgotozero} from \S\ref{ss:lengths} together with the following isoperimetric inequality for the energy on \((-\infty,s_0] \times \R/\Z\), \begin{align*}
E(\ul v; s_0) := \tint_{(-\infty,s_0] \times \R/\Z} \tfrac 1 2|\d \ul v|^2 \,\d s\d t.
\end{align*}

\begin{lemma} \label{lem:isoineq}
There exists \(C > 0\) depending only on \(M_\ell,L_{\ell(\ell+1)},\omega_\ell,J_\ell\) such that the following inequality holds for all \(s \leq 0\): 
\begin{align*}
E(\ul v; s) \leq C\!\! \sum_{i \in \{0,1,2\}} \ell(\gamma_s^i)^2.
\end{align*}
\end{lemma}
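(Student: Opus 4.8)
The plan is to adapt the isoperimetric-inequality argument of \cite[\S4.5]{ms:jh} (and \cite{ah}) to the quilted setting. Since each $v_\ell$ is $J_\ell$-holomorphic and $J_\ell$ is compatible, $v_\ell^*\om_\ell = \tfrac12|\d v_\ell|^2\,\d s\,\d t \geq 0$, so $E(\ul v;s) = \sum_\ell\int_{(-\infty,s]\cap V_\ell}v_\ell^*\om_\ell$; I would rewrite this by Stokes' theorem as a sum of integrals over the circle $\{s\}\times\R/\Z$ and over the four seam arcs, and then estimate each term by a constant times $\ell(\gamma_s)^2$ using primitives that vanish to high order at a base point adapted to where the slices $\gamma_s$ accumulate. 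A compact range of $s$ is harmless: if $\ul w$ is constant then $E(\ul v;s)=0$ and there is nothing to prove, and otherwise unique continuation for pseudoholomorphic curves forces at least one of $\gamma_s^0,\gamma_s^1,\gamma_s^2$ to be nonconstant for every $s$, so $s\mapsto\sum_i\ell(\gamma_s^i)^2$ is continuous and strictly positive, hence bounded below on any compact interval, and there the inequality holds with a suitable $C$. So it suffices to treat $s\leq s_0$ for $s_0$ sufficiently negative.

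For $s \leq s_0$: finiteness of energy gives $E(\ul v;s)\to 0$ as $s\to-\infty$; combining this with the uniform gradient bound of Lemma~\ref{lem:cylgradbound} and a quilted mean-value / $\varepsilon$-regularity inequality — which holds uniformly even where the middle strip $V_1$ is thin, by the width-independent estimates of \S\ref{sec:convergence} — shows that $\sup_t|\d\ul v(s,t)|\to 0$ as $s\to-\infty$. Hence, taking $s_0$ negative enough, the image of $\ul v|_{(-\infty,s_0]\times\R/\Z}$, and a fortiori of every slice and every short sub-collar of it, is contained in arbitrarily small neighborhoods: Darboux balls in the $M_\ell$, and Weinstein tubular neighborhoods of the embedded Lagrangians $L_{01}$ and $L_{12}$ along the respective seam arcs.

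On such a small neighborhood I would choose primitives $\lambda_\ell$ of $\om_\ell$, together with primitives $h_{01}$ of $\bigl((-\lambda_0)\oplus\lambda_1\bigr)|_{L_{01}}$ and $h_{12}$ of $\bigl((-\lambda_1)\oplus\lambda_2\bigr)|_{L_{12}}$ — these restrictions are closed because $L_{01}$ and $L_{12}$ are Lagrangian — arranged so that $\lambda_0,\lambda_1,\lambda_2,h_{01},h_{12}$ all vanish at a base point $\bar p$ lying on (the relevant branch of) $L_{01}\times_{M_1}L_{12}$ near which the slices accumulate. Because each $\lambda_\ell$ vanishes at $\bar p$, the one-forms $(-\lambda_0)\oplus\lambda_1$ and $(-\lambda_1)\oplus\lambda_2$ vanish there, so $h_{01}$ and $h_{12}$ vanish to second order at the corresponding points of $L_{01}$ and $L_{12}$. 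Now Stokes' theorem on $(-\infty,s]\cap V_\ell$ (carried out on finite collars $[S,s]\times\R/\Z$ and passed to the limit $S\to-\infty$) rewrites $E(\ul v;s)$ as the sum of $\sum_\ell\int_{\{s\}\times(\mathrm{domain}_\ell)}v_\ell^*\lambda_\ell$; the four seam contributions, which after grouping adjacent $v_\ell^*\lambda_\ell$ pairs into pullbacks of the closed forms on $L_{01}$ resp.\ $L_{12}$ reduce to differences of $h_{01},h_{12}$ between the seam endpoints on $\{s\}\times\R/\Z$ and those at the puncture; and a contribution from the puncture end that vanishes because $\ell(\gamma_\sigma)\to 0$. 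Since the whole slice $\gamma_s$ — and with it every seam endpoint on it — lies within $C\,\ell(\gamma_s)$ of $\bar p$, and since $\lambda_\ell$ vanishes at $\bar p$ while $h_{01},h_{12}$ vanish there to second order, each of these boundary terms is $O\bigl(\ell(\gamma_s)^2\bigr)$, with constants depending only on the fixed geometric data; this is the claimed inequality.

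The step I expect to be the main obstacle is making the Stokes computation legitimate with a single choice of primitives: a priori the image is only known to have small diameter on each slice, not to converge to a single point (which is part of the conclusion of Theorem~\ref{thm:remsing}), so the cluster set at the puncture could be a nontrivial connected subset of $L_{01}\times_{M_1}L_{12}$. The resolution should be that the gradient decay $\sup_t|\d\ul v(s,t)|\to 0$ forces the portion of the domain over which one runs the argument — a collar $[S,s]\times\R/\Z$ with $S=S(s)\to-\infty$ chosen slowly enough — to have image of controlled, shrinking size, so that the slice at $S$ and the four seam arcs beyond $S$ contribute negligibly in the limit; carrying this out uniformly in $s$, while simultaneously bookkeeping the four seams pinching together and the two components of $V_1$ near the puncture, is the heart of the proof, and is exactly the quilted refinement over \cite{ms:jh} and \cite{ah}.
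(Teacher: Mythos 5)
Your proposal takes a genuinely different route from the paper. The paper follows the filling argument of \cite[Lemma~4.5.1]{ms:jh}: it reparametrizes $\ul v|_{[s_2,s_1]\times\R/\Z}$ as a quilted map on the curved boundary of a solid cylinder, extends it piecewise-smoothly to the interior (respecting the seam conditions) by means of Lemma~\ref{lem:cleanconsequences}, applies Stokes' theorem to the extended map so the lateral seam faces cancel, and bounds the symplectic areas of the two disk-quilt caps by the isoperimetric inequality \cite[Theorem~4.4.1]{ms:jh}; sending $s_2\to-\infty$ then kills the lower cap via Lemma~\ref{lem:lengthsgotozero}. You instead propose to work with local primitives $\lambda_\ell$ of $\om_\ell$ and primitives $h_{01},h_{12}$ of $((-\lambda_0)\oplus\lambda_1)|_{L_{01}}$, $((-\lambda_1)\oplus\lambda_2)|_{L_{12}}$, vanishing to second order at a base point $\bar p$, and apply Stokes to the hollow tube $[S,s]\times\R/\Z$ directly. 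This is a legitimate alternative idea, but as executed it has a gap.

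The gap is exactly the one you flag, and your proposed resolution does not close it. Your boundary contribution at $\{S\}$ is the slice term $\sum_\ell\int v_\ell^*\lambda_\ell$ (which tends to zero since $\ell(\gamma_S)\to 0$ and $\lambda_\ell$ is bounded) plus the telescoped seam values $h_{01},h_{12}$ at the seam endpoints on $\{S\}\times\R/\Z$. These $h$-values are $O(d(\cdot,\bar p)^2)$, and nothing you have said forces $d(\cdot,\bar p)$ to tend to $0$ as $S\to-\infty$: the a priori information is only that slice \emph{lengths} decay, not that the slices accumulate at $\bar p$. Taking $S=S(s)\to-\infty$ slowly does not help: $E(\ul v;s)=E(\ul v;[S(s),s]\times\R/\Z)+E(\ul v;S(s))$, and your Stokes computation controls only the first summand, while the second is the unknown you are trying to bound. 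In effect your argument quietly assumes that the cluster set at the puncture is a point, which is (part of) the conclusion of Theorem~\ref{thm:remsing}, so it is circular. The paper's filling avoids this because all its estimates are local to the compact cylinder $[s_2,s_1]\times\R/\Z$ and the cap at $s_2$ is killed by $\ell(\gamma_{s_2})\to 0$ alone, with no constraint on where the image drifts.

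Two further points. First, your argument never engages with the cleanly-immersed hypothesis, which the paper shows is used in \S\ref{sec:remsing} \emph{only} here. It is essential: your $\bar p\in L_{01}\times_{M_1}L_{12}$ must be simultaneously close to the two lifted seam corners (one from the pair of seams near $t\approx 1/2$, one from the pair near $t\approx 1$), and these two lifts can lie on different local branches over nearly the same point of $M_{02}$; producing a common nearby intersection point with uniform constants is precisely Lemma~\ref{lem:cleanconsequences}(ii), and the corresponding interpolation across nearby slices is Lemma~\ref{lem:cleanconsequences}(i), both of which require clean intersection. Second, the decay $\sup_t|\d\ul v(s,t)|\to 0$ you invoke via a ``quilted mean-value inequality'' is a nontrivial claim requiring its own proof, not contained in Lemma~\ref{lem:cylgradbound} or \S\ref{sec:convergence} as stated; the paper's proof needs only the gradient \emph{bound} and the length decay of Lemma~\ref{lem:lengthsgotozero}. (Your unique-continuation remark for the compact range $s\in[s_0,0]$ is a nice observation and does address a point the paper passes over silently.)
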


\noindent We defer the proof to later in \S\ref{ss:isoineq}; now, we turn to the proof of removal of singularity.
Throughout this subsection we denote \begin{align*}
M_{0112} := M_0^- \times M_1 \times M_1^- \times M_2, \qquad M_{02} := M_0^- \times M_2.
\end{align*}

\begin{proof}[Proof of Theorem~\ref{thm:remsing}]
\begin{step1}
There exist \(C_1, C_2 > 0\) such that the inequality \(E(\ul v; s) \leq C_1\exp(C_2s)\) holds for all \(s \leq 0\).
\end{step1}

\medskip

%\comment{should the second ``\(\leq\)'' be an ``\(=\)''?}

\noindent Fix \(s \leq 0\).
The following inequality follows from Lemma~\ref{lem:isoineq}: \begin{align*}
E(\ul v; s) \stackrel{\text{Lem.~\ref{lem:isoineq}}}{\leq} C\!\!\sum_{\ell \in \{0,1,2\}} \ell(v_\ell(s,-))^2 \leq \frac C 2 \left(\tint_0^1 |\d \ul v(s, t)| \,\d t\right)^2 \leq \frac C 2 \tint_0^1 |\d \ul v(s,t)|^2 \,\d t = C\frac {\d} {\d s}(E(\ul v; s)).
\end{align*}
Manipulating this inequality and integrating from \(s\) to \(0\), we obtain \(E(\ul v; s) \leq E(\ul v; 0)\exp(s/C)\).

\begin{step2}
The limit \(\lim_{s \to -\infty} v_0(s, -)\) exists in \(\cC^0([5/8, 7/8], M_0)\).
\end{step2}

\noindent Fix a \(\cC^1\) embedding \(i\colon M_0 \to \R^N\); we will show that \(\Lambda := \lim_{s \to -\infty} (i \circ v_0|_{[5/8,7/8]})\) exists in \(\cC^0\).

We will do so by showing that \(\Lambda\) exists in \(W^{1,2}\), where \(W^{1,2}([5/8,7/8],\R^N)\) is defined using the Euclidean metric on \(\R^N\).
Fix \(s_2 \leq s_1 < 0\).
Cauchy--Schwarz implies the following inequality: \begin{align} \label{eq:L2viaMinkowski}
\|(i \circ v_0)(s_1, -) - (i \circ v_0)(s_2, -) \|_{L^2([5/8, 7/8])} &= \left(\int_{5/8}^{7/8} \left| \int_{s_2}^{s_1} \partial_s (i \circ v_0) \,\d s \right|^2 \,\d t \right)^{1/2} \\
&\leq (s_1 - s_2)^{1/2}\left( \int_{5/8}^{7/8} \int_{s_2}^{s_1} \left|\partial_s(i \circ v_0)\right|_{g_{\euc}}^2 \,\d s \d t\right)^{1/2}. \nonumber
\end{align}
Since \(M_0\) is compact, there exists a constant of equivalence \(\mu > 0\) for the norms induced by \(g_{M_0}\) and \(i^*g_\euc\), so \eqref{eq:L2viaMinkowski} yields the following: \begin{align}
\|(i \circ v_0)(s_1, -) - (i \circ v_0)(s_2, -) \|_{L^2([5/8, 7/8])} &\stackrel{\mathclap{\eqref{eq:L2viaMinkowski}}}{\leq} \mu(s_1 - s_2)^{1/2}\left( \int_{5/8}^{7/8} \int_{s_2}^{s_1} |\partial_sv_0|_{g_{M_0}}^2 \,\d s \d t\right)^{1/2} \nonumber \\
&\stackrel{\mathclap{\text{Step 1}}}{\leq} \mu \: C_1^{1/2}(s_1 - s_2)^{1/2} \exp(C_2s_1/2) \label{eq:anotherremsingcalc} \\
&=: C_3(s_1 - s_2)^{1/2}\exp(C_2s_1/2). \nonumber
\end{align}
Write \(s_2 = (m+\epsilon)s_1\) for \(m \in \N\) and \(\epsilon \in [0, 1)\).
We have: \begin{align}
& \|(i \circ v_0)(s_1, -) - (i \circ v_0)((m + \epsilon)s_1, -)\|_{L^2([5/8, 7/8])} \nonumber \\
&\hspace{1.25in}\leq \|(i \circ v_0)(ms_1, -) - (i \circ v_0)((m+\epsilon) s_1, -)\|_{L^2([5/8, 7/8])} \nonumber \\
& \hspace{2.25in} + \sum_{j=1}^{m-1} \|(i \circ v_0)(js_1, -) - (i \circ v_0)((j+1)s_1, -)\|_{L^2([5/8, 7/8])} \label{eq:yetanotherremsingcalc} \\
&\hspace{1.25in}\stackrel{\mathclap{\eqref{eq:anotherremsingcalc}}}{\leq} C_3|s_1|^{1/2}\sum_{j=1}^m \exp(j C_2 s_1/2) \nonumber \\
&\hspace{1.25in}\leq \frac {C_3|s_1|^{1/2}\exp(C_2 s_1/2)} {1 - \exp(C_2 s_1/2)}. \nonumber
\end{align}
This estimate would be enough to show that $\Lambda$ exists in $L^2$; we now make a further estimate in order to upgrade this convergence to $W^{1,2}$.
Define \(f(s) := | \tfrac {\d} {\d t}(i \circ v_0)(s,-) |_{L^2([5/8,7/8])}\).
This quantity tends to zero as \(s \to -\infty\): \begin{align*}
\limsup_{s \to -\infty} f(s) \leq \limsup_{s \to -\infty} \mu |\tfrac {\d} {\d t}v_0(s, -) |_{L^2([5/8,7/8])} \stackrel{\text{Lem.~\ref{lem:lengthsgotozero}}}{=} 0.
\end{align*}
We can now show that \(\Lambda\) exists in \(W^{1,2}\): We have \begin{align*}
|(i \circ v_0)(s_1, -) - (i \circ v_0)(s_2, -)|_{W^{1,2}([5/8,7/8])} &\leq |(i \circ v_0)(s_1, -) - (i \circ v_0)(s_2, -)|_{L^2([5/8,7/8])} \\
&\hspace{2.5in} + f(s_1) + f(s_2) \\
&\stackrel{\mathclap{\eqref{eq:yetanotherremsingcalc}}}{\leq} \frac {C_3|s_1|^{1/2}\exp(C_2 s_1/2)} {1 - \exp(C_2 s_1/2)} + f(s_1) + f(s_2),
\end{align*}
which implies the equality \begin{align*}
\limsup_{s_1 \to -\infty} \sup_{s_2 \in (-\infty, s_1]} |(i \circ v_0)(s_1, -) - (i \circ v_0)(s_2, -)|_{W^{1,2}([5/8,7/8])} = 0.
\end{align*}
Since \(W^{1,2}([5/8,7/8], \R^N)\) is complete, \(\Lambda\) exists in \(W^{1,2}\).
The Sobolev embedding \(W^{1,2} \hookrightarrow \cC^0\) for one-dimensional domains now implies that \(\Lambda\) exists in \(\cC^0\).

\begin{step3}
We prove Theorem~\ref{thm:remsing}.
\end{step3}

\noindent By Lemma~\ref{lem:lengthsgotozero}, the first claim of Theorem~\ref{thm:remsing} would follow from the existence of the limits \begin{align*}
\Lambda_0 := \lim_{s \to -\infty} v_0(s, \tfrac 3 4), \qquad \Lambda_1 := \lim_{s \to -\infty} v_1(s, \tfrac 1 2), \qquad \Lambda_1' := \lim_{s \to -\infty} v_1(s, 1), \qquad \Lambda_2 := \lim_{s \to -\infty} v_2(s, \tfrac 1 4).
\end{align*}
It follows from Step 2 that \(\Lambda_0\) exists, and an analogous argument shows that \(\Lambda_2\) exists.
It remains to show that \(\Lambda_1, \Lambda_1'\) exist.

To show that \(\Lambda_1\) exists, we will show convergence of the path \begin{align*}
\gamma\colon s \mapsto (v_0(s, \tfrac 1 2+\theta(s)), v_1(s, \tfrac 1 2), v_1(s, \tfrac 1 2), v_2(s, \tfrac 1 2 -\theta(s))
\end{align*}
as \(s \to -\infty\).
This path takes values in \(M_0 \times \Delta_{M_1} \times M_2\) and \(\lim_{s \to -\infty} d_{M_{0112}}(\gamma(s), L_{01} \times L_{12}) = 0\) (by Lemma~\ref{lem:cylgradbound}), so the distances \( d_{M_{0112}}(\gamma(s), L_{01} \times_{M_1} L_{12})\) converge to zero.
Hence there exists a path \(\beta\colon (-\infty,0] \to L_{01} \times_{M_1} L_{12}\) satisfying the equality \begin{align} \label{eq:approxofgamma}
\lim_{s\to-\infty} d_{M_{0112}}(\gamma(s),\beta(s)) = 0.
\end{align}
(Indeed, define \(\beta\) by choosing a tubular neighborhood \(U\) of \(L_{01}\times_{M_1} L_{12}\), and compose \(\gamma\) with the projection \(U \to L_{01} \times_{M_1} L_{12}\).)  We will show that \(\lim_{s \to -\infty} \gamma(s)\) exists by showing that \(\lim_{s \to -\infty} \beta(s)\) exists.

Lemma~\ref{lem:lengthsgotozero}, the existence of \(\Lambda_0\) and \(\Lambda_2\), and \eqref{eq:approxofgamma} imply that \(x_{02} := \lim_{s \to -\infty} \pi_{02}(\beta(s))\) exists.
Since \(\pi_{02}\) restricts to an immersion of \(L_{01} \times_{M_1} L_{12}\) into \(M_{02}\), there exist finitely many preimages \(x_{0112}^1, \ldots, x_{0112}^k\) of \(x_{02}\) in \(L_{01} \times_{M_1} L_{12}\).
Choose \(\eps > 0\) small enough that the preimage of \(B_\eps(x_{02})\) under \(\pi_{02}|_{L_{01} \times_{M_1} L_{12}}\) consists of \(k\) connected components \(U^1, \ldots, U^k\), with \(x_{0112}^j\) contained in \(U^j\).
Now choose \(s_0 \in (-\infty, 0]\) such that \(\pi_{02}(\beta((-\infty, s_0]))\) is contained in \(B_\eps(x_{02})\).
The image \(\beta((-\infty, s_2])\) must then be contained in a single \(U_j\).
If \((s_\nu), (s_\nu')\) are sequences with limit \(-\infty\) such that \(x_{0112}^{j_1} := \lim_{\nu \to \infty} \beta(s_\nu)\) and \(x_{0112}^{j_2} := \lim_{\nu \to \infty} \beta(s_\nu')\) exist, then \(j_1\) and \(j_2\) must be equal; since \(L_{01} \times_{M_1} L_{12}\) is compact, this is enough to conclude that \(\lim_{s \to -\infty} \beta(s)\) exists.
As noted above, this is enough to conclude the first statement of Theorem~\ref{thm:remsing}.

The points \((\Lambda_0, \Lambda_1, \Lambda_1, \Lambda_2)\) and \((\Lambda_0, \Lambda_1', \Lambda_1', \Lambda_2)\) are lifts in \(L_{01} \times_{M_1} L_{12}\) of \((\Lambda_0, \Lambda_2)\), so if the projection from \(L_{01}\times_{M_1} L_{12}\) to \(M_{02}\) is injective, then \(\Lambda_1, \Lambda_1'\) are the same point.
\end{proof}

\begin{figure}
\centering
\def\svgwidth{0.9\columnwidth}
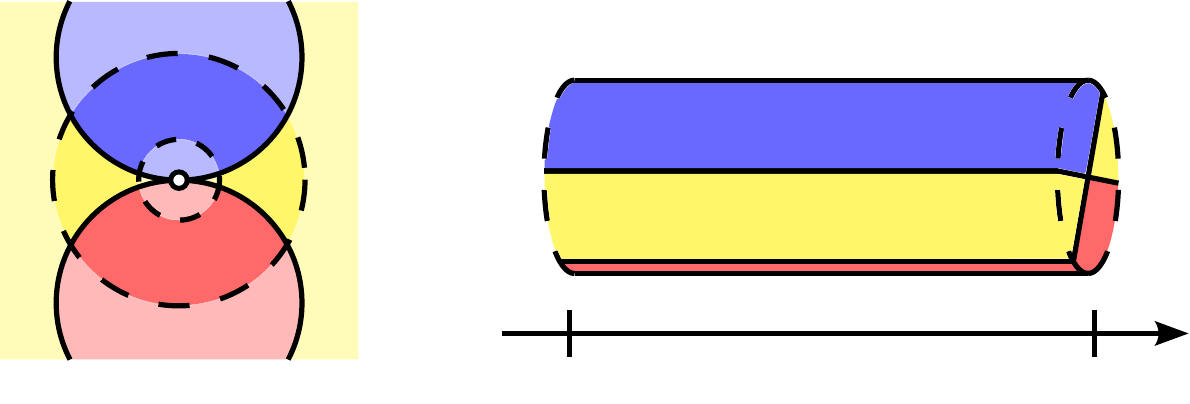
\caption{The start of our argument for Lemma~\ref{lem:isoineq} is to restrict an inverted figure eight to an annulus centered at the singular point (the portion in the left figure between the dotted circles), then reparametrize to a quilted tube with straight seams (the tubular part of the boundary of the cylinder on the right).
Next, we piecewise-smoothly extend to the interior of the cylinder.}
\label{fig:isoineq}
\end{figure}

Our proof of Lemma~\ref{lem:isoineq} is an adaptation to the quilted setting of \cite[Lemma~4.5.1]{ms:jh}, which is an isoperimetric inequality for the energy near an interior point of a \(J\)-holomorphic curve.
Their argument went like this: restricting the map to an annulus, then reparametrizing, yields a map defined on the curved part of the boundary of a cylinder.
By a lengths-go-to-zero result analogous to our Lemma~\ref{lem:lengthsgotozero}, they extend this map to the entire cylinder.
Their result now follows from Stokes' theorem, along with the isoperimetric inequality for the symplectic area applied to the top and bottom caps of the cylinder.
The difficulty in adapting this result to the quilted setting is in the extension to the cylinder (see Figure~\ref{fig:isoineq} for an illustration of the setup); the key will be the consequences of cleanly-immersed composition recorded in the following lemma.

\begin{lemma} \label{lem:cleanconsequences}
There exist \(C>0,\epsilon > 0\) such that:
\begin{itemize}
\item[(i)] If \(x_{02},y_{02}\in L_{01} \circ L_{12}\) have lifts \begin{align*}
x,x'\in\pi_{02}^{-1}\{x_{02}\} \cap
(L_{01}\times_{M_1}L_{12}), \qquad y,y'\in \pi_{02}^{-1}\{y_{02}\}\cap (L_{01}\times_{M_1}L_{12})
\end{align*}
with small distances \begin{align*}
\max \{ d_{M_{0112}}(x, y), d_{M_{0112}}(x', y') \} \leq \eps,
\end{align*}
then there exists a smooth path \(\gamma_{02}\colon [0, 1] \to M_{02}\) with image in \(L_{01}\circ L_{12}\) and smooth lifts \(\gamma, \gamma'\colon [0,1] \to L_{01}\times_{M_1}L_{12}\) that have bounded lengths \begin{align*}
\ell(\gamma_{02}) + \ell(\gamma) + \ell(\gamma') \leq C\, d_{M_{02}}(x_{02}, y_{02})
\end{align*}
and satisfy \(\gamma(0) = x\), \(\gamma(1) = y\), \(\gamma'(0) = x'\), and \(\gamma'(1) = y'\).

\item[(ii)] For \(x, x'\in L_{01}\times_{M_1}L_{12}\) with \(d_{M_{02}}(\pi_{02}(x), \pi_{02}(x')) \leq \epsilon\), there exists a point \(y_{02} \in L_{01} \circ L_{12}\) and preimages \(y, y' \in \pi_{02}^{-1}(y_{02}) \cap L_{01}\times_{M_1}L_{12}\) such that the following inequality holds: \begin{align*}
 d_{M_{02}}(\pi_{02}(x'), y_{02}) + d_{M_{02}}(\pi_{02}(x) , y_{02}) + d_{M_{0112}}(x, y) + d_{M_{0112}}(x', y') \leq C\,d_{M_{02}}(\pi_{02}(x), \pi_{02}(x')).
 \end{align*}
\end{itemize}
\end{lemma}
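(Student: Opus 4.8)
The plan is to reduce both parts to a single local model: near a point of \(L_{01}\circ L_{12}\), after a change of chart of \(M_{02}\), two of its local branches become open subsets of linear subspaces meeting at the origin, and the desired paths and points are written down by hand in this model. To set this up, write \(P:=L_{01}\times_{M_1}L_{12}\), which is compact, so \(\pi_{02}\colon P\to M_{02}\) is a proper immersion with finite fibers. I would first extract, by standard compactness arguments, uniform data: a radius \(\epsilon_0>0\) such that for every \(p\in P\) the map \(\pi_{02}\) restricted to the \(\epsilon_0\)-ball \(V_p\subset P\) around \(p\) is an embedding onto a submanifold \(\Sigma_p\subset M_{02}\) (a \emph{local branch}); a constant \(C_0\) with \(C_0^{-1}|w|\le|\d\pi_{02}(w)|\le C_0|w|\) for all \(w\in TP\), so that each \(\pi_{02}|_{V_p}\) and its inverse distort lengths and small distances by factors in \([C_0^{-1},C_0]\); and, from the cleanly-immersed hypothesis plus a finite cover of \(P\), for every \(q\in L_{01}\circ L_{12}\) and any two local branches \(\Sigma,\Sigma'\) through \(q\) a \(C_0\)-bi-Lipschitz chart \(\phi\) of \(M_{02}\) near \(q\) carrying \(\Sigma,\Sigma'\) simultaneously into linear subspaces \(E,E'\), with the angle between \(E\) and \(E'\), and between \((E\cap E')^{\perp}\cap E\) and \(E'\), bounded below by some \(\alpha>0\). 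One more compactness step — the one that genuinely uses properness of \(\pi_{02}\) — produces a threshold \(\epsilon\le\epsilon_0\) such that whenever \(d_{M_{02}}(\pi_{02}(p),\pi_{02}(p'))\le\epsilon\), the branches \(\Sigma_p\) and \(\Sigma_{p'}\) intersect at a point close to both \(\pi_{02}(p)\) and \(\pi_{02}(p')\) and lying, with those two points, inside a single linearizing chart for \(\Sigma_p,\Sigma_{p'}\). In what follows all constants are absorbed into a single \(C\).

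For (i): \(d_{M_{0112}}(x,y)\le\epsilon\) places \(x\) and \(y\) in one patch, so the branch \(\Sigma:=\Sigma_x\) contains both \(x_{02}\) and \(y_{02}\); likewise \(x',y'\) share a patch and \(\Sigma':=\Sigma_{x'}\) contains \(x_{02}\) and \(y_{02}\). Thus \(\Sigma,\Sigma'\) are local branches through \(x_{02}\); in the linearizing chart \(\phi\) at \(x_{02}\), with \(\phi(x_{02})=0\), \(\phi(\Sigma)\subset E\), \(\phi(\Sigma')\subset E'\), one has \(\phi(y_{02})\in E\cap E'\) (it lies in both branches and near \(x_{02}\)). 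The segment \(t\mapsto t\,\phi(y_{02})\) stays in the subspace \(E\cap E'\), so its \(\phi\)-preimage \(\gamma_{02}\) is a smooth path in \(\Sigma\cap\Sigma'\subset L_{01}\circ L_{12}\) from \(x_{02}\) to \(y_{02}\) with \(\ell(\gamma_{02})\le C\,d_{M_{02}}(x_{02},y_{02})\). Pulling \(\gamma_{02}\) back through \((\pi_{02}|_{V_x})^{-1}\) and \((\pi_{02}|_{V_{x'}})^{-1}\) yields the lifts \(\gamma\) from \(x\) to \(y\) and \(\gamma'\) from \(x'\) to \(y'\), each of length \(\le C\,\ell(\gamma_{02})\); summing the three lengths gives the bound in (i).

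For (ii): write \(x_{02}=\pi_{02}(x)\), \(x_{02}'=\pi_{02}(x')\), \(\Sigma=\Sigma_x\), \(\Sigma'=\Sigma_{x'}\). Since \(d_{M_{02}}(x_{02},x_{02}')\le\epsilon\), the reduction above gives \(q_0\in\Sigma\cap\Sigma'\) close to both \(x_{02}\) and \(x_{02}'\), with \(x_{02},x_{02}',q_0\) in the domain of a linearizing chart \(\phi\) for \(\Sigma,\Sigma'\); normalize \(\phi(q_0)=0\), \(\phi(\Sigma)\subset E\), \(\phi(\Sigma')\subset E'\). Then \(\phi(x_{02})\in E\), \(\phi(x_{02}')\in E'\), and \(|\phi(x_{02})-\phi(x_{02}')|\le C\,d_{M_{02}}(x_{02},x_{02}')\). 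Put \(b:=\Pi_{E\cap E'}(\phi(x_{02}))\) (orthogonal projection) and \(y_{02}:=\phi^{-1}(b)\in\Sigma\cap\Sigma'\). Decomposing \(\phi(x_{02})\) along \(E\cap E'\) and \((E\cap E')^{\perp}\cap E\) and using the lower angle bound against \(E'\) together with \(\operatorname{dist}(\phi(x_{02}),E')\le|\phi(x_{02})-\phi(x_{02}')|\) gives \(|b-\phi(x_{02})|\le C\,d_{M_{02}}(x_{02},x_{02}')\), hence also \(|b-\phi(x_{02}')|\le C\,d_{M_{02}}(x_{02},x_{02}')\); so \(d_{M_{02}}(y_{02},x_{02})\) and \(d_{M_{02}}(y_{02},x_{02}')\) are each \(\le C\,d_{M_{02}}(x_{02},x_{02}')\). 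Finally \(y:=(\pi_{02}|_{V_x})^{-1}(y_{02})\) and \(y':=(\pi_{02}|_{V_{x'}})^{-1}(y_{02})\) lie in \(P\) with \(d_{M_{0112}}(x,y)\le C\,d_{M_{02}}(x_{02},y_{02})\) and \(d_{M_{0112}}(x',y')\le C\,d_{M_{02}}(x_{02}',y_{02})\) by the distortion bound; adding the four terms proves (ii).

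I expect the main obstacle to be the uniform reduction above, not the two constructions, which are just linear algebra and the distortion estimates. In particular the last point — that branches over \(\epsilon\)-close base points genuinely intersect inside the patches — fails for general immersions, and its proof is exactly where compactness of \(L_{01}\) and \(L_{12}\) (equivalently, properness of \(\pi_{02}\)) enters. This is the ingredient with no counterpart in the pointwise \(J\)-holomorphic-curve estimates of \cite[\S4.5]{ms:jh} being adapted here, and it is what makes the quilted isoperimetric inequality of Lemma~\ref{lem:isoineq} go through.
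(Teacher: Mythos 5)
Your proposal is correct and takes essentially the same approach as the paper's brief sketch: use the cleanly-immersed hypothesis to linearize the two branches through the relevant point, construct the path (resp.\ the nearest point) inside $V\cap V'$ in that linear model, and lift back to $L_{01}\times_{M_1}L_{12}$ through the local inverses of $\pi_{02}$. The uniformity and compactness ingredients you spell out --- the uniform bi-Lipschitz constants, the lower angle bound between $(E\cap E')^\perp\cap E$ and $E'$, and the argument that branches with $\eps$-close base points must intersect near a double point of $\pi_{02}$ --- are exactly the details the paper declares ``no more enlightening'' and leaves implicit in its sketch.
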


\noindent We will give only a brief sketch, since a formal proof is no more enlightening.
The key is that the cleanly-immersed hypothesis implies that any two branches of \(L_{01} \circ L_{12}\) meet like two vector subspaces.
\begin{itemize}
\item[(i)] If \(x,x',y,y'\) lie in the same local branch of \(L_{01}\circ L_{12}\), then the conclusion is immediate.
Otherwise, \(x\) and \(y\) lie in one branch, and \(x'\) and \(y'\) lie in another.
Represent these branches as open subsets of vector subspaces \(V, V' \subset \R^N\).
Then \(x_{02}, y_{02}\) lie in \(V \cap V'\), and we may define \(\gamma_{02}\) to be a path in \(V \cap V'\) from \(x_{02}\) to \(y_{02}\) and \(\gamma\) (resp.\ \(\gamma'\)) to be the lift to the portion of \(L_{01} \times_{M_1} L_{12}\) corresponding to \(V\) (resp.\ to \(V'\)).

\item[(ii)] If \(x,x'\) lie in the same local branch of \(L_{01}\circ L_{12}\), the conclusion is again immediate.
Otherwise, represent the branches containing \(x,x'\) as open subsets of \(V, V' \subset \R^N\).
Set \(y_{02}\) to be the nearest point in \(V \cap V'\) to \(x\), and let \(y\) (resp.\ \(y'\)) be the lift to the portion of \(L_{01} \times_{M_1} L_{12}\) corresponding to \(V\) (resp.\ to \(V'\)).
\end{itemize}

\begin{figure}
\centering
\def\svgwidth{0.3\columnwidth}
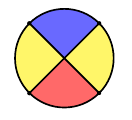
\caption{The domains used in the proof of Lemma~\ref{lem:isoineq}.}
\label{fig:isoineqdomains}
\end{figure}

\begin{proof}[Proof of Lemma~\ref{lem:isoineq}]
\begin{step1}
We prove Lemma~\ref{lem:isoineq} up to an extension result, which we defer to Steps 2 and 3.
\end{step1}

\noindent It suffices to prove the lemma for \(s \leq s_0 \leq 0\), where \(s_0\) is chosen so that \(\sup_{s \leq s_0} \ell(\gamma_s^i), \: i \in \{0,1,2\}\) is bounded by a constant \(\delta > 0\) to be determined later.
As illustrated in Figure~\ref{fig:isoineqdomains}, partition the unit circle \(S_1(0)\) into four segments by \begin{gather*}
A_0 := \{(x, y) \in S_1(0) \: | \: y \leq x, \: y \leq -x\}, \qquad A_1 := \{(x, y) \in S_1(0) \: | \: x \geq y, \: x \geq -y\}, \\
A_2 := \{(x, y) \in S_1(0) \: | \: y \geq x, \: y \geq -x\}, \qquad A_3 := \{(x, y) \in S_1(0) \: | \: x \leq y, \: x \leq -y \}
\end{gather*}
and set \(p_{i(i+1)} := A_i \cap A_{i+1}\) for \(i \in \Z/4\Z\).
Given \(s_1,s_2\) with \(s_2 < s_1 \leq s_0\), define maps\footnote{The maps $\sigma_i$ are simply the reparametrizations of $v_0,v_1,v_2$ from the intersections of $V_0,V_1,V_2$ with $\{(s,t)\:|\:s_2\leq s\leq s_1\}$ to the domains $A_i \times [s_2,s_1]$.
We are doing nothing in the $s$ factor and rescaling in the $t$ factor.
See Fig.~\ref{fig:isoineq} for an illustration of this reparametrization.} \(\sigma_i\colon A_i \times [s_2,s_1] \to M_i\), \(i \in \{0,1,2,3\}\) (where we set \(M_3 := M_1\)) like so: \begin{gather*}
\sigma_0(\exp(2\pi i t), s) := v_0\bigl(s, \tfrac 1 2 + \theta(s) + 4(\tfrac 1 2 - 2\theta(s))(t - \tfrac 5 8)\bigr), \quad \sigma_1(\exp(2\pi i t), s) := v_1\bigl(s, 8\theta(s)t \bigr), \\
\sigma_2(\exp(2\pi i t), s) := v_2\bigl(s, \theta(s) + 4(\tfrac 1 2 - 2\theta(s))(t - \tfrac 1 8)\bigr), \quad \sigma_3(\exp(2\pi i t), s) := v_1\bigl(s, \tfrac 1 2 + 8\theta(s)(t - \tfrac 1 2 )\bigr),
\end{gather*}
where we take \(t \in [-1/8,7/8]\).
These maps satisfy the seam condition \begin{align*}
(\sigma_i(p_{i(i+1)},s), \sigma_{i+1}(p_{i(i+1)},s)) \in L_{i(i+1)}, \qquad \forall \: i \in \Z/4\Z, \: s \in [s_2,s_1],
\end{align*}
where we set \(L_{23} := L_{12}^T\), \(L_{30} := L_{01}^T\).

In order to apply Stokes' theorem, we will extend the maps \(\sigma_i\) to the domains $U_i \times [s_2,s_1]$, where $U_i$ are the following four quadrants of the closed unit disk (refer again to Fig.~\ref{fig:isoineqdomains}): \begin{gather*}
U_0 := \{(x, y) \in \ol B(0,1) \: | \: y \leq x, \: y \leq -x\}, \qquad U_1 := \{(x, y) \in \ol B(0,1) \: | \: x \geq y, \: x \geq -y\}, \\
U_2 := \{(x, y) \in \ol B(0,1) \: | \: y \geq x, \: y \geq -x\}, \qquad U_3 := \{(x, y) \in \ol B(0,1) \: | \: x \leq y, \: x \leq -y \}.
\end{gather*}
Choose \(s_2 = t_0 < t_1 < \cdots < t_k = s_1\) such that for every \(j\), the diameters of the images \(\sigma_i( A_i \times [t_j, t_{j+1}] )\) are bounded by
\(\delta\).
As long as \(\delta\) is small enough, Steps 2 and 3 below allow us to extend \(\sigma_i\) to a continuous map \(\wt \sigma_i\colon U_i \times [s_2,s_1] \to M_i\) that is smooth on \(U_i \times [t_j,t_{j+1}]\), such that the extended maps satisfy the Lagrangian seam conditions \begin{align} \label{eq:isoperseamconds}
(\wt\sigma_i(p,s), \wt\sigma_{i+1}(p,s)) \in L_{i(i+1)} \qquad \forall \: p \in U_i \cap U_{i+1}, \: s \in [s_2,s_1].
\end{align}
Indeed, use Step 2 to define the maps \(\wt\sigma_i\) on the slices \(U_i \times \{t_j\}\), then use Step 3 to extend \(\wt\sigma_i\) to all of \(U_i \times [s_2,s_1]\).

Since \(\om_0, \om_1, \om_2\) are closed, Stokes' theorem yields the following: \begin{align*}
E(\ul v; [s_2, s_1] \times \R/\Z) &\leq \sum_{i \in \{1, 2\}} \Bigl| \sum_{j \in \{0,1,2,3\}} \tint_{U_j \times \{s_i\}} \wt\sigma_j^*\om_j \Bigr| \leq C \!\! \sum_{i \in \{1, 2\}} \sum_{j \in \{1,2,3\}} \ell(\gamma_{s_i}^j)^2,
\end{align*}
where in the first inequality we have used the seam conditions \eqref{eq:isoperseamconds}, and in the second inequality we have used the isoperimetric inequality for the symplectic area \cite[Theorem 4.4.1]{ms:jh}.
Taking the limit as \(s_2\) goes to \(-\infty\) and applying Lemma~\ref{lem:lengthsgotozero} yields the conclusion of the lemma.

Throughout the final two steps, the constants \(C_i\) may depend on the geometry of $L_{01}$, $L_{12}$, $\omega_\ell$, and $J_\ell$, but are independent of $\kappa$.

\begin{step2}
There exist \(C > 0\), \(\kappa_0 > 0\) so that if \(\sigma_0, \sigma_1, \sigma_2, \sigma_3\) are smooth maps with \begin{gather*}
\sigma_i\colon A_i \to M_i, \qquad (\sigma_i(p_{i(i+1)}), \sigma_{i+1}(p_{i(i+1)})) \in L_{i(i+1)}, \qquad \kappa := \max_{i \in \{0,1,2,3\}} \diam \sigma_i(A_i) \leq \kappa_0,
\end{gather*}
then there exist extensions \(\wt\sigma_i\colon U_i \to M_i\) of \(\sigma_i\) such that: \begin{gather*}
(\wt\sigma_i(p), \wt\sigma_{i+1}(p)) \in L_{i(i+1)} \quad \forall \: p \in U_i \cap U_{i+1}, \qquad \max_{i \in \{0,1,2,3\}} \ell(\wt\sigma_i|_{\partial U_i}) + \max_{i \in \{0,1,2,3\}} \diam \wt\sigma_i(U_i) \leq C\kappa.
\end{gather*}
\end{step2}

\noindent 
The points
\begin{align*}
z := (\sigma_0(p_{01}), \sigma_1(p_{01}), \sigma_1(p_{12}), \sigma_2(p_{12})), \qquad z' := (\sigma_0(p_{30}), \sigma_3(p_{30}), \sigma_3(p_{23}), \sigma_2(p_{23}))
\end{align*}
lie in \(L_{01} \times L_{12}\).
Since the intersection \((L_{01} \times L_{12}) \cap (M_0 \times \Delta_{M_1} \times M_2)\) defining \(L_{01} \times_{M_1} L_{12}\) is transverse, there are points \(x, x' \in L_{01} \times_{M_1} L_{12}\) that are close to \(z\) resp.\ \(z'\), \begin{align} \label{eq:xtoz}
d_{M_{0112}}(x, z) \leq C_1\kappa, \qquad d_{M_{0112}}(x', z') \leq C_1\kappa,
\end{align}
for a uniform constant \(C_1 >0\).
The triangle inequality bounds the distance between the projections of \(z, z'\): \begin{align*}
d_{M_{02}}(\pi_{02}(x), \pi_{02}(x')) &\leq d_{M_{02}}(\pi_{02}(x), \pi_{02}(z)) + d_{M_{02}}(\pi_{02}(z), \pi_{02}(z')) + d_{M_{02}}(\pi_{02}(z'), \pi_{02}(x')) \\
&\leq 2(C_1 + 1)\kappa.
\end{align*}
As long as \(\kappa_0\) is chosen to be small enough, it follows from Lemma~\ref{lem:cleanconsequences}(ii) that there exist lifts \(y, y' \in L_{01}\times_{M_1} L_{12}\) of a single point \(y_{02} \in L_{01} \circ L_{12}\) with small distances to \(x\) resp.\ \(x'\): \begin{align} \label{eq:xtoy}
d_{M_{0112}}(x, y) \leq C_2\kappa, \qquad d_{M_{0112}}(x', y') \leq C_2\kappa,
\end{align}
where \(C_2>0\) is another constant.
We can now define the extensions \(\wt\sigma_i\) at the origin: \begin{align*}
(\wt\sigma_0(0), \wt\sigma_1(0), \wt\sigma_1(0), \wt\sigma_2(0)) := y, \qquad (\wt\sigma_0(0), \wt\sigma_3(0), \wt\sigma_3(0), \wt\sigma_2(0)) := y'.
\end{align*}
Inequalities \eqref{eq:xtoz} and \eqref{eq:xtoy} and the triangle inequality yield: \begin{align*}
d_{M_{0112}}(y, z) \leq (C_1 + C_2)\kappa, \qquad d_{M_{0112}}(y', z') \leq (C_1 + C_2)\kappa.
\end{align*}
The local triviality of smooth submanifolds implies that there exists a constant \(C_3>0\) such that after redefining \(\kappa_0\) if necessary, we may extend the maps \(\wt\sigma_i\) to the set \(\{(a,b) \in \ol B(0,1) \: | \: b = \pm a\}\) such that the seam conditions \eqref{eq:isoperseamconds} hold and the length of the loop \(\wt\sigma_i|_{\partial U_i}\) is bounded by \(C_3\kappa\).
Once more redefining \(\kappa_0\) if necessary, we may extend each map \(\wt\sigma_i\) to \(U_i\) in such a way that the diameter of \(\wt\sigma_i(U_i)\) is bounded by \(C_4\kappa\) for \(C_4>0\) another constant.

\begin{step3}
There exists \(\lambda>0\) such that the following holds.
Assume that \(\sigma_0, \sigma_1, \sigma_2, \sigma_3\) are smooth maps and \(a < b\) are real numbers with: \begin{gather*}
\sigma_i\colon A_i \times [a,b] \cup U_i \times \{a, b\} \to M_i, \qquad \max_{i \in \{0,1,2,3\}} \diam \im \sigma_i \leq \lambda, \\
(\sigma_i(q),\sigma_{i+1}(q)) \in L_{i(i+1)} \qquad \forall \: q \in \bigl(p_{i(i+1)} \times [a,b]\bigr) \cup  \bigl((U_i \cap U_{i+1}) \times \{a,b\}\bigr) .
\end{gather*}
Then each \(\sigma_i\) can be extended to a smooth map \(\wt\sigma_i\colon U_i \times[a,b] \to M_i\) such that the following seam conditions hold: \begin{align*}
(\wt\sigma_i(q), \wt\sigma_{i+1}(q)) \in L_{i(i+1)} \qquad \forall \: q \in (U_0 \cap U_1) \times [a,b].
\end{align*}
\end{step3}

\noindent Define \(x, x', y, y' \in L_{01} \times_{M_1} L_{12}\) like so: \begin{align*}
x &:= (\sigma_0, \sigma_1, \sigma_1, \sigma_2)( 0 ,a), \qquad x' := (\sigma_0, \sigma_3, \sigma_3, \sigma_2)( 0 ,a), \\
y &:= (\sigma_0, \sigma_1, \sigma_1, \sigma_2)( 0 ,b), \qquad y' := (\sigma_0, \sigma_3, \sigma_3, \sigma_2)( 0 ,b).
\end{align*}
Then \(\pi_{02}(x) = \pi_{02}(x')\) and \(\pi_{02}(y) = \pi_{02}(y')\), and \(x\) resp.\ \(x'\) are close to \(y\) resp.\ \(y'\): \begin{align*}
d_{M_{0112}}(x, y) \leq 4\lambda, \qquad d_{M_{0112}}(x',y') \leq 4\lambda.
\end{align*}
It follows from Lemma~\ref{lem:cleanconsequences}(i) that as long as \(\lambda\) is chosen to be small enough, there exists a path \(\gamma_{02}\colon [a,b] \to L_{01} \circ L_{12}\) and lifts \(\gamma, \gamma'\colon [a,b] \to L_{01} \times_{M_1} L_{12}\) from \(x\) to \(y\) resp.\ from \(x'\) to \(y'\) of small lengths: \begin{align*}
\ell(\gamma) + \ell(\gamma') \leq C_5\lambda
\end{align*}
for \(C_5>0\) a constant.
Define \(\wt\sigma_0,\wt\sigma_1,\wt\sigma_2,\wt\sigma_3\) on \(\{0\} \times [a,b]\) like so: \begin{align*}
(\wt\sigma_0,\wt\sigma_1,\wt\sigma_1, \wt\sigma_2)(0, t) := \gamma(t), \qquad (\wt\sigma_0,\wt\sigma_3,\wt\sigma_3,\wt\sigma_2)(0,t) := \gamma'(t).
\end{align*}
The diameter of the loop \((\wt\sigma_0, \wt\sigma_1)|_{\partial((U_0 \cap U_1) \times [a,b])}\) is bounded by \(2(C_5+1)\lambda\), so by redefining \(\lambda\) if necessary, we may extend \((\wt\sigma_0, \wt\sigma_1)\) to a map \((U_0 \cap U_1) \times [a,b] \to M_0^- \times M_1\) with small diameter: \begin{align*}
\diam\, ((\wt\sigma_0,\wt\sigma_1)((U_0 \cap U_1) \times[a,b])) \leq C_6\lambda
\end{align*}
for \(C_6 > 0\) a constant.
Extend \((\wt\sigma_1, \wt\sigma_2)\), \((\wt\sigma_2,\wt\sigma_3)\), \((\wt\sigma_3,\wt\sigma_0)\) to \((U_1\cap U_2)\times[a,b]\), \((U_2\cap U_3) \times[a,b]\), \((U_3\cap U_0)\times[a,b]\) in the same fashion.
Finally, \(\wt\sigma_i|_{\partial(U_i \times [a,b])}\) is a map to \(M_i\) from a domain homeomorphic to \(S^2\), and its diameter is small: \begin{align*}
\diam\, (\wt\sigma_i(\partial(U_i \times [a,b]))) \leq (2C_6 + 1)\lambda.
\end{align*}
Redefining \(\lambda\) if necessary, we may extend \(\wt\sigma_i\) to all of \(U_i \times [a,b]\).
\end{proof}

\section{Convergence modulo bubbling for strip-shrinking} \label{sec:convergence}

The purpose of this section is to prove a convergence-mod-bubbling result, which we state as Thm.~\ref{thm:nonfoldedstripshrink} below.
It is a strengthening of the strip-shrinking analysis of \cite{isom} from \(H^2\cap W^{1,4}\)-convergence to \(\cC^k\)-convergence; we also allow the domain to be equipped with nonstandard complex structures and the geometric composition \(L_{01} \circ L_{12}\) to be immersed, rather than embedded.
Thm.~\ref{thm:nonfoldedstripshrink} is used to prove the Gromov Compactness Theorem in \cite{bw:compactness}, which we in turn rely on in \S\ref{sec:remsing} of the current paper to prove the Removal of Singularity Theorem \ref{thm:remsing}.
The proof of Thm.~\ref{thm:nonfoldedstripshrink} (which we will give in \S\ref{ss:compactness_proof}) relies on a collection of $\delta$-independent elliptic estimates, which we will formulate and prove in \S\ref{ss:estimate}.

\begin{theorem} \label{thm:nonfoldedstripshrink}
There exists \(\eps > 0\) such that the following holds: Fix \(k \in \N_{\geq 1}\), positive reals \(\delta^\nu \to 0\) and \(\rho > 0\), symmetric complex structures\footnote{See \S\ref{ss:CsandACs} for the definition of a symmetric complex structure.} \(j^\nu\) on \([-\rho,\rho]^2\) that converge \(\cC^\infty\) to \(j^\infty\) with \(\| j^\infty - i \|_{\cC^0} \leq \eps\), and \(\cC^{k+2}_\loc\)-bounded sequences of domain-dependent compatible almost complex structures \(J_\ell^\nu: [-\rho,\rho]^2 \to \J_\ell(M_\ell,\om_\ell)\), \(\ell \in \{0,1,2\}\) such that the \(\cC^{k+1}\)-limit of each \((J_\ell^\nu)\) is a compatible \(\cC^\infty\) almost complex structure \(J_\ell^\infty\colon [-\rho,\rho]^2 \to \J(M_\ell,\omega_\ell)\).

Then if \((v_0^\nu,v_1^\nu,v_2^\nu)\) is a sequence of size-\((\delta^\nu,\rho)\) \((J_0^\nu,J_1^\nu,J_2^\nu, j^\nu)\)-holomorphic squiggly strip quilts for \((L_{01}, L_{12})\) with uniformly bounded gradients, \begin{align*}
\sup_{\nu \in \N, \: (s,t) \in [-\rho,\rho]^2}  |\d v^\nu|(s,t) < \infty,
\end{align*}
then there is a subsequence in which \((v_0^\nu(t - \delta^\nu))\), \((v_1^\nu|_{t=0})\), \((v_2^\nu(t + \delta^\nu))\) converge \(\cC^k_\loc\) to a \((J_0^\infty, J_2^\infty,j^\infty)\)-holomorphic size-\(\rho\) degenerate strip quilt \((v_0^\infty, v_1^\infty, v_2^\infty)\) for \(L_{01} \times_{M_1} L_{12}\).

If the inequality \(\liminf_{\nu\to\infty, (s,t) \in [-\rho,\rho]^2} | \d v^\nu |(s,t) > 0\) holds, then \(v_0^\infty, v_2^\infty\) are not both constant.
\end{theorem}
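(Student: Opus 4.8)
\emph{Overall approach.} The plan is to derive higher elliptic estimates whose constants are independent of the vanishing width $\delta^\nu$, extract a $\cC^k$-convergent subsequence by compactness, and then identify the limit and check nontriviality. First I would fix $0<\rho'<\rho$ and work on $[-\rho',\rho']^2$, recovering the $\cC^k_{\loc}$ statement by letting $\rho'\nearrow\rho$. As in the strip-shrinking setup of \cite{isom} (and as in the folding used in the proof of Lemma~\ref{lem:cylgradbound}) one may either keep the three maps $v_\ell^\nu$ coupled through \eqref{eq:squigglyseams}, with the middle domain rescaled to $[-1,1]$, or fold into a single map to $M_{0211}$ with boundary values in $(L_{01}\times L_{12})^T$; in either picture the uniform gradient bound pins the images in fixed compact sets, so after composing with finitely many charts the linearized Cauchy--Riemann operators become compactly supported perturbations of the standard $\bar\partial$ with coefficients that are $\cC^{k+1}$-bounded and convergent (using $j^\nu\to j^\infty$ in $\cC^\infty$ and $J_\ell^\nu\to J_\ell^\infty$ in $\cC^{k+1}$). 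The key step is to invoke Lemma~\ref{lem:masterestimate}: the uniform gradient bound supplies the needed $L^\infty$ input, and the lemma upgrades it to a $\delta^\nu$-independent $H^3$, hence $\cC^1$, bound on the triple that is valid uniformly up to and across both seams even as $\delta^\nu\to0$ --- exactly the control interior estimates cannot provide when the two seams collide. With uniform $\cC^1$ control in hand I would bootstrap: differentiate \eqref{eq:squigglyCR}, and feed the resulting equations (whose inhomogeneities are controlled by the bounds already obtained together with the $\cC^{k+1}$-bounds on the $J_\ell^\nu$) back into Lemma~\ref{lem:masterestimate} near the thin strip and into standard boundary elliptic estimates away from it, obtaining $\delta^\nu$-independent $\cC^{k+1}_{\loc}$ bounds on $v_0^\nu(\cdot,\cdot-\delta^\nu)$, on $v_2^\nu(\cdot,\cdot+\delta^\nu)$, and on the rescaled middle map $(s,t)\mapsto v_1^\nu(s,\delta^\nu t)$ on $[-\rho',\rho']\times[-1,1]$.

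\emph{Extraction and identification of the limit.} Arzel\`a--Ascoli then yields a subsequence along which $v_0^\nu(\cdot,\cdot-\delta^\nu)\to v_0^\infty$ and $v_2^\nu(\cdot,\cdot+\delta^\nu)\to v_2^\infty$ in $\cC^k_{\loc}$ on the two half-planes and $v_1^\nu(s,\delta^\nu t)\to\wt v_1^\infty(s,t)$ in $\cC^k_{\loc}$; since the rescaled middle Cauchy--Riemann equation together with the gradient bound forces $\partial_t$ of that map to be $O(\delta^\nu)$, the limit $\wt v_1^\infty$ is $t$-independent, so $v_1^\infty(s):=\lim_\nu v_1^\nu(s,0)$ exists in $\cC^k_{\loc}$. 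Passing to the limit in \eqref{eq:squigglyCR} shows $v_0^\infty,v_2^\infty$ are holomorphic for the limiting structures. For the lifted seam condition \eqref{eq:degenseams}, note that the almost-lift $\gamma^\nu(s):=(v_0^\nu(s,-\delta^\nu),v_1^\nu(s,0),v_1^\nu(s,0),v_2^\nu(s,\delta^\nu))$ satisfies $d_{M_{0112}}(\gamma^\nu(s),L_{01}\times_{M_1}L_{12})\le C\delta^\nu$ by the gradient bound, while $\gamma^\nu\to(v_0^\infty(s,0),v_1^\infty(s),v_1^\infty(s),v_2^\infty(s,0))$; since $L_{01}\times_{M_1}L_{12}$ is closed, the limit lies in it for every $s$, so \eqref{eq:degenseams} holds with empty singular set. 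Hence $(v_0^\infty,v_1^\infty,v_2^\infty)$ is a degenerate strip quilt for $L_{01}\times_{M_1}L_{12}$.

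\emph{Nontriviality.} Suppose $v_0^\infty\equiv c_0$ and $v_2^\infty\equiv c_2$ are both constant. Then $(c_0,v_1^\infty(s),v_1^\infty(s),c_2)\in L_{01}\times_{M_1}L_{12}$ for all $s$, so $v_1^\infty(s)$ lies in the fiber of the immersion $\pi_{02}$ of the compact manifold $L_{01}\times_{M_1}L_{12}$ over $(c_0,c_2)$, which is finite; as $v_1^\infty$ is continuous on a connected domain it is therefore constant. But then for any fixed $(s_0,t_0)$ with $t_0<0$ the $\cC^k_{\loc}$-convergence to a constant gives $|\d v_0^\nu|(s_0,t_0-\delta^\nu)\to0$, and $(s_0,t_0-\delta^\nu)\in[-\rho,\rho]^2$ for $\nu$ large, so $\inf_{(s,t)\in[-\rho,\rho]^2}|\d v^\nu|(s,t)\to0$, contradicting $\liminf_{\nu\to\infty,(s,t)\in[-\rho,\rho]^2}|\d v^\nu|(s,t)>0$. (In fact the uniform bounds above give more: $|\d v^\nu|\to0$ uniformly on compacta, including across the collapsing middle strip.)

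\emph{Main obstacle.} The crux of the argument is the $\delta^\nu$-independence of the elliptic estimates in the presence of the nonstandard domain complex structures $j^\nu$ --- this is precisely the content of Lemma~\ref{lem:masterestimate}, which we are entitled to assume here. Granting it, the remaining work is bookkeeping: organizing the bootstrap so that every iteration stays width-independent while simultaneously handling the two nearly coincident seams and the collapsing middle strip, and tracking that the limiting Cauchy--Riemann data and seam condition assemble into a genuine degenerate strip quilt.
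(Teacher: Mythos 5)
Your overall strategy matches the paper's: invoke Lemma~\ref{lem:masterestimate} to get $\delta^\nu$-independent elliptic estimates, extract a subsequence via Arzel\`a--Ascoli, identify the limit as a degenerate strip quilt (passing to the limit in the CR equation, deducing the lifted seam condition from the gradient bound and transversality of $L_{01}\times L_{12}\pitchfork M_0\times\Delta_{M_1}\times M_2$, and regularity via the local embeddedness of $L_{01}\times_{M_1}L_{12}$), and then establish nontriviality. Your bootstrap is described loosely — the paper organizes it through the folded setup $(w_{02}^\nu,\wh w^\nu)$ expressed via the corrected exponential map as $e_{u_{\delta^\nu}}(\zeta^\nu)$, iterating in the modified norms $\wt H^l$ with the key identity \eqref{eq:Dzetaisalmostzero} and Lemma~\ref{lem:improvedsobolev} — but the plan is recognizably the same.

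The gap is in the nontriviality argument. You show $\inf_{(s,t)\in[-\rho,\rho]^2}|\d v^\nu|(s,t)\to 0$ by looking at a single fixed interior point $(s_0,t_0-\delta^\nu)$ where the $\cC^k_\loc$-convergence to a constant applies. That only contradicts the very strong reading $\liminf_\nu\inf_{(s,t)}|\d v^\nu|>0$. In the application to Gromov compactness (Corollary~\ref{cor:rescale}), and as the paper's own Step~3 makes explicit, what must be ruled out is that $|\d v_\ell^\nu(0,\tau^\nu)|\ge\kappa>0$ for some sequence $\tau^\nu$; in particular $\tau^\nu$ may converge to $0$ from \emph{inside} the shrinking middle strip, $\tau^\nu\in[-\delta^\nu,\delta^\nu]$. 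There the $\cC^k_\loc$-convergence of $v_1^\nu|_{t=0}$ controls the map along $t=0$ but gives no a priori control on $|\partial_t v_1^\nu(0,\tau^\nu)|$, and interior elliptic estimates cannot reach across the collapsing strip. Your parenthetical remark ``(In fact the uniform bounds above give more: $|\d v^\nu|\to 0$ uniformly on compacta, including across the collapsing middle strip.)'' is precisely the claim that needs a proof, and you assert it without one. The paper closes exactly this gap: writing $\wh w^\nu=e_{\wh u_{\delta^\nu}}(\wh\zeta^\nu)$ with $\ol u$ constant, it reduces to showing $\sup_{t\in[0,\delta^\nu]}|\wh\nabla\wh\zeta^\nu(0,t)|\to 0$, which is done via
\begin{align*}
\sup_{t\in[0,\delta^\nu]}\bigl|\wh\nabla\wh\zeta^\nu(0,t)-\wh\nabla\wh\zeta^\nu(0,0)\bigr|
\;\le\; C\,(\delta^\nu)^{1/2}\,\|\wh\zeta^\nu\|_{H^3(\wh Q_{\delta^\nu,\rho})}\;\longrightarrow\;0,
\end{align*}
using the $H^3$-bound that Lemma~\ref{lem:masterestimate} provides, together with a one-dimensional Sobolev estimate along the short $t$-interval. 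You should supply this (or an equivalent) argument rather than leaving the uniform decay of the middle-strip gradient as an unproven aside.
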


We now fix some data and explain the basic setup we will use for the proof of Thm.~\ref{thm:nonfoldedstripshrink}.

\medskip

\begin{center}
\parbox{0.9\columnwidth}{\it Fix for \S\ref{sec:convergence} closed symplectic manifolds \(M_0, M_1, M_2\) and compact Lagrangians \(L_{01} \subset M_0^- \times M_1\), \(L_{12} \subset M_1^- \times M_2\) with immersed composition as defined in the beginning of \S\ref{sec:remsing}.}
\end{center}

\medskip

\noindent For convenience, we will denote by \((M_{02}, \om_{02})\), \((M_{0211},\om_{0211})\) the symplectic manifolds \begin{gather*}
(M_{0211},\om_{0211}) := M_0 \times M_2^- \times M_1^- \times M_1 = (M_0\times M_2 \times M_1 \times M_1, \om_0 \oplus (-\om_2) \oplus (-\om_1) \oplus \om_1), \\
(M_{02}, \om_{02}) := M_0^- \times M_2 = (M_0 \times M_2, (-\om_0) \oplus \om_2)
\end{gather*}
and by \((L_{01}\times L_{12})^T\subset M_{0211}\) the transposed Lagrangian gotten by permuting the factors of \(M_{0211}\) by \((x_0,x_1,y_1,x_2) \mapsto (x_0,x_2,x_1,y_1)\).

The analysis in our proof of Theorem~\ref{thm:nonfoldedstripshrink} will be phrased in terms of pairs of smooth maps\footnote{This ``folded'' setup was first used in \cite{isom}.
It is more convenient to work with maps of this form, e.g.\ when we construct the compatible connection in Lem.~\ref{lem:conn} and prove the first estimate in Lem.~\ref{lem:k=0ellipticineq}.} \((w_{02},\wh w) = ((w_0,w_2), (w_0',w_2',w_1',w_1))\):
\begin{gather} \label{eq:foldedmaps}
w_{02}\colon (-\rho,\rho) \times [0,\rho-2\delta) \to M_{02}, \qquad \wh w\colon (-\rho,\rho) \times [0,\delta] \to M_{0211}, \\
(w_{02},\wh w)(s, 0) \in \Delta_{M_{02}} \times \Delta_{M_1}, \qquad \wh w(s, \delta) \in (L_{01}\times L_{12})^T \qquad \forall \: s \in (-\rho,\rho), \nonumber
\end{gather}
where \(\delta\) is nonnegative.
From now on we denote the domains of \(w_{02}\) and \(\wh w\) by \begin{align*}
Q_{02,\delta,\rho} := (-\rho,\rho) \times [0,\rho-2\delta), \qquad \wh Q_{\delta,\rho} := (-\rho,\rho) \times [0,\delta],
\end{align*}
and combine them into the notation \(\bQ_{\delta,\rho} := (Q_{02,\delta,\rho}, \wh Q_{\delta,\rho})\).
We denote the closures in \(\R^2\) by \begin{align*}
\ol Q_{02,\delta,\rho} := [-\rho,\rho] \times [0,\rho-2\delta], \qquad \ol{\wh Q}_{\delta,\rho} := [-\rho,\rho] \times [0,\delta].
\end{align*}

For \(\delta > 0,\rho>0\) (resp.\ \(\delta = 0,\rho>0\)), the setup\footnote{Here we use the notation \eqref{eq:foldedmaps}\(_{\delta,\rho}\) to explicitly indicate the dependence of \eqref{eq:foldedmaps} on $\delta$ and $\rho$.
We will use similar notation elsewhere; it will be a succinct way to refer to equations with the parameters specialized in various ways.}
\eqref{eq:foldedmaps}\(_{\delta,\rho}\) is equivalent to a triple of smooth maps \((v_0,v_1,v_2)\) with the same domain and targets as a size-\((\delta,\rho)\) squiggly strip quilt for \((L_{01}, L_{12})\) \eqref{eq:squigglymaps}\(_{f = \delta}\) (resp.\ as a size-\(\rho\) degenerate strip quilt for \(L_{01} \times_{M_1} L_{12}\) \eqref{eq:degenmaps}) and that fulfill the seam conditions \eqref{eq:squigglyseams}\(_{f=\delta}\) (resp.\ \eqref{eq:degenseams}) but are not necessarily pseudoholomorphic or of finite energy.
Indeed, given such \((v_0,v_1,v_2)\), define \((w_{02}, \wh w)\) like so: \begin{gather} \label{eq:fromtripletofolded}
w_{02}(s,t) := ( v_0(s, -t - 2\delta), v_2(s, t + 2\delta)), \\
\wh w(s,t) := (v_0(t - 2\delta), v_2(s, -t + 2\delta), v_1(s, -t), v_1(s, t)). \nonumber
\end{gather}
Conversely, for \(\delta \geq 0\) and \((w_{02}, \wh w)\) satisfying \eqref{eq:foldedmaps}\(_{\delta,\rho}\), define \((v_0,v_1,v_2)\) satisfying \eqref{eq:squigglymaps}\(_{f=\delta}\), \eqref{eq:squigglyseams}\(_{f=\delta}\) (for \(\delta > 0\)) or \eqref{eq:degenmaps}, \eqref{eq:degenseams} (for \(\delta = 0\)) like so: \begin{gather}
v_0(s,t) := \begin{cases}
w_0'(s, t + 2\delta), & -2\delta \leq t \leq -\delta, \\
w_0(s, -t - 2\delta), & t \leq -2\delta,
\end{cases} \qquad v_2(s,t) := \begin{cases}
w_2'(s, -t + 2\delta), & \delta \leq t \leq 2\delta, \\
w_2(s, t - 2\delta), & 2\delta \leq t,
\end{cases} \nonumber \\
v_1(s,t) := \begin{cases}
w_1'(s, -t), & -\delta \leq t \leq 0, \\
w_1(s, t), & 0 \leq t \leq \delta. \label{eq:fromfoldedtotriple}
\end{cases}
\end{gather}
The transformations \eqref{eq:fromtripletofolded}, \eqref{eq:fromfoldedtotriple} are inverse to one another.

\subsection{Complex and almost complex structures in the folded and unfolded setups} \label{ss:CsandACs}

The Gromov Compactness Theorem in \cite{bw:compactness} is proved by ``straightening'' the seams of a squiggly strip quilt.
Pushing forward the standard complex structure from the squiggly strip quilt to the new quilt with horizontal seams produces a nonstandard complex structure, which is symmetric under conjugation.
We axiomatize this property in the following definition.

\begin{definition}
Fix \(\rho > 0\).
A {\bf symmetric complex structure on \(\mathbf{[-\rho,\rho]^2}\)} is a complex structure \(j\) such that the equality \begin{align*}
j(s,t) = -\sigma \circ j(s,-t) \circ \sigma
\end{align*}
holds for any \((s,t) \in [-\rho,\rho]^2\), where \(\sigma\) is the conjugation \(\alpha\partial_s + \beta\partial_t \mapsto \alpha\partial_s - \beta\partial_t\).
\end{definition}

When a symmetric complex structure, almost complex structures, and a pseudoholomorphic squiggly strip quilt are ``pushed forward'' by the folding operation \eqref{eq:fromtripletofolded}, the result is a ``coherent system of complex structures'', a ``coherent pair of almost complex structures'', and a ``pseudoholomorphic folded strip quilt'', defined as follows.

%In fact, we can ``push forward'' the complex structure on \(\C\) and the almost complex structures on \(M_0, M_1, M_2\) by \eqref{eq:fromtripletofolded}, so that a \((J_0, J_1, J_2, j)\)-holomorphic size-\((\delta,\rho)\) squiggly strip quilt of width \(\delta\) is equivalent to a {\bf \(\mathbf{(\bJ, \bj)}\)-holomorphic size-\(\mathbf{(\delta,\rho)}\) folded strip quilt} for \(\bj\) a {\bf coherent collection of complex structures} and \(\bJ\) a {\bf coherent pair of almost complex structures}.  After defining these notions, we will explain how these setups are equivalent.

\begin{definition} \label{def:coherent}
Fix \(\delta > 0\) and \(\rho > 0\).
\begin{itemize}
\item A {\bf coherent collection of complex structures \(\bj\) on \(\mathbf{\ol \bQ_{\delta,\rho}}\)
} is a pair \(\bj = (j_{02}, \wh j) = ((j_0, j_2), (j_0',j_2',j_1',j_1))\), where \(j_0, j_2\) (resp.\ \(j_0', j_2', j_1', j_1\)) are complex structures on \(\ol Q_{02,\delta,\rho}\) (resp.\ on \(\ol{\wh Q}_{\delta,\rho}\)) such that the following equalities hold for all \(s \in (-\rho,\rho)\): \begin{gather}
j_\ell(s,0) = -\sigma \circ j_\ell'(s,0) \circ \sigma, \label{eq:coherentCs0} \\
j_0'(s,\delta) = j_2'(s,\delta), \qquad j_1'(s,\delta) = j_1(s,\delta), \qquad j_1'(s,\delta) = -\sigma \circ j_0'(s,\delta) \circ \sigma. \label{eq:coherentCsdelta}
\end{gather}

\item A {\bf coherent pair of almost complex structures \(\bJ\) on \(\mathbf{\ol\bQ_{\delta,\rho}}\)} is a pair \(\bJ = (J_{02}, \wh J)\), where \(J_{02}, \wh J\) are almost complex structures \begin{align*}
J_{02}: \ol Q_{02,\delta,\rho} \to \J(M_{02},\om_{02}), \qquad \wh J: \ol{\wh Q}_{\delta,\rho} \to \J(M_{0211},\om_{0211})
\end{align*}
satisfying the following compatibility condition: For \(s \in (-\rho,\rho)\), $\wh J(s,0)$ decomposes as
\begin{align*}
\wh J(s,0) = (-J_{02}(s,0)) \oplus \wh J_{11}(s),
\end{align*}
where $\wh J_{11}(s), s \in (-\rho,\rho)$ is some almost-complex structure on $M_{11}$.

\item Fix a coherent collection \(\bj\) of complex structures and a coherent pair \(\bJ\) of almost complex structures on \(\ol\bQ_{\delta,\rho}\).
A {\bf \(\mathbf{(\bJ,\bj)}\)-holomorphic size-\(\mathbf{(\delta,\rho)}\) folded strip quilt} is a collection of smooth maps \(w = (w_{02}, \wh w) = ((w_0,w_2), (w_0',w_2',w_1',w_1))\) satisfying \eqref{eq:foldedmaps} that have finite energy, \begin{align*}
\tint_{Q_{02,\delta,\rho}} u_{02}^*\om_{02} < \infty, \qquad \tint_{\wh Q_{\delta,\rho}} \wh u^*\om_{0211} < \infty,
\end{align*}
and satisfy the Cauchy--Riemann equations \begin{align*}
\dbar_{\bJ,\bj}w &= (\dbar_{02,J_{02},j_{02}}w_{02}, \wh \dbar_{\wh J,\wh j}\wh w) = 0,
\end{align*}
where \(\dbar_{\bJ,\bj} = (\dbar_{02,J_{02},j_{02}}, \wh \dbar_{\wh J,\wh j})\) is the pair of operators defined by: \begin{align}
\dbar_{02,J_{02},j_{02}}w_{02} &:= (\d w_0,\d w_2) \circ (j_0,j_2)(\partial_s) - J_{02}(-, w_{02})\circ (\partial_s w_0,\partial_s w_2), \nonumber \\
\wh \dbar_{\wh J, \wh j}\wh w &:= (\d w_0',\d w_2',\d w_1',\d w_1) \circ (j_0',j_2',j_1',j_1)(\partial_s) - \wh J(-,\wh w) \circ (\partial_s w_0',\partial_s w_2',\partial_s w_1',\partial_s w_1). \label{eq:delbar}
\end{align}
\end{itemize}
\end{definition}

\noindent Given a \((J_0, J_1, J_2, j)\)-holomorphic squiggly strip quilt \((v_0,v_1,v_2)\) with \(j\) symmetric, we can produce a folded strip quilt like this: Define a coherent collection \(\bj\) of complex structures by \begin{gather} \label{eq:tripletofoldedCs}
j_{02}(s,t) = (j_0, j_2)(s,t) := (-\sigma \circ j(s,-t-2\delta) \circ \sigma, j(s,t+2\delta)), \\
\wh j(s,t) = (j_0', j_2', j_1', j_1)(s,t) := (j(s,t-2\delta), -\sigma\circ j(s,-t+2\delta) \circ \sigma, -\sigma \circ j(s,-t) \circ\sigma, j(s,t)) \nonumber
\end{gather}
and a coherent pair \(\bJ\) of almost complex structures by \begin{gather} \label{eq:tripletofoldedACs}
J_{02}(s,t) := (-J_0(s,-t-2\delta)) \oplus J_2(s, t+2\delta), \\
\wh J := J_0(t-2\delta) \oplus (-J_2(-t+2\delta)) \oplus (-J_1(s,-t)) \oplus J_1(s,t). \nonumber
\end{gather}
If \((w_{02}, \wh w)\) is defined by applying \eqref{eq:fromtripletofolded} to \((v_0,v_1,v_2)\), then \((w_{02},\wh w)\) is a \((\bJ, \bj)\)-holomorphic size-\((\delta,\rho)\) folded strip quilt.
Indeed, \((w_{02},\wh w)\) have the correct domains and codomains and satisfy the seam conditions, as discussed earlier, and the finite-energy hypothesis on \((v_0,v_1,v_2)\) implies that \((w_{02},\wh w)\) has finite energy.
The Cauchy--Riemann equation \eqref{eq:squigglyCR} for \(v_0\) on \((-\rho,\rho)\times(-\rho,-2\delta]\) can be rewritten as \begin{align*}
\d w_0(s,t) \circ (-\sigma \circ j(s,-t-2\delta) \circ \sigma) - (-J_0(s,-t-2\delta,w_0(s,t))) \circ \d w_0(s,t) = 0
\end{align*}
for \(w_0(s,t) := v_0(s,-t-2\delta)\) as in \eqref{eq:fromtripletofolded}, so \(w_0\) is \((-J_0(s,-t-2\delta), j_0(s,t))\)-holomorphic on \(Q_{02,\delta,\rho}\).
Five similar calculations complete the check that \((w_{02},\wh w)\) is \((\bJ, \bj)\)-holomorphic.

Finally, we consider the coordinate representation of a coherent collection of complex structures.  Fix a coherent collection \(\bj = ((j_0, j_2), (j_0', j_2', j_1', j_1))\) of complex structures on \(\ol\bQ_{\delta,\rho}\).
Define \(a_0(s,t), c_0(s,t) \in \R\) by \begin{align} \label{eq:Cscoordsdef}
j_0(s,t)(\partial_s) =: a_0(s,t)\partial_s + c_0(s,t)\partial_t,
\end{align}
and define \(a_j(s,t), c_j(s,t)\) for \(j \in \{1,2\}\) and \(a_k'(s,t), c_k'(s,t)\) for \(k \in \{0,1,2\}\) in the same way.
Then \eqref{eq:coherentCs0} and \eqref{eq:coherentCsdelta} translate into the following conditions on these coefficients: \begin{gather}
a_j(s,0) = -a_j'(s,0), \quad c_j(s,0) = c_j'(s,0) \qquad \forall \: j \in \{0,1,2\}, \label{eq:Cscoordconds} \\
a_0(s,\delta) = a_2(s,\delta), \qquad a_1'(s,\delta) = a_1(s,\delta), \qquad a_0(s,\delta) = -a_1'(s,\delta), \nonumber \\
c_0(s,\delta) = c_2(s,\delta), \qquad \: c_1'(s,\delta) = c_1(s,\delta), \qquad c_0(s,\delta) = c_1'(s,\delta). \nonumber
\end{gather}
We will use this coordinate representation in \S\ref{ss:estimate}.

\subsection{A collection of \(\delta\)-independent elliptic estimates} \label{ss:estimate}

This subsection is devoted to proving Lemma~\ref{lem:masterestimate}, which is the crucial \(\delta\)-independent elliptic estimate needed for the proof of Theorem~\ref{thm:nonfoldedstripshrink}.

\medskip

\begin{center}
\parbox{0.9\columnwidth}{\it In addition to the data fixed at the beginning of \S\ref{sec:convergence}, fix for \S\ref{ss:estimate} \(\rho>0\) and a pair of maps \(u = (u_{02}, \ol u)\) satisfying \eqref{eq:foldedmaps}\(_{\delta=0,\rho}\).}
\end{center}

\medskip

\noindent Furthermore, we continue to denote by \(\bi\) the standard coherent collection of complex structures defined in \eqref{eq:standardj}, and for any \(\delta \in (0,\rho/4]\) we define a pair \(u_\delta = (u_{02,\delta}, \wh u_\delta)\) of smooth maps satisfying \eqref{eq:foldedmaps}\(_{\delta,\rho}\) by: \begin{align} \label{eq:uextend}
u_{02,\delta} := u_{02}|_{Q_{02,\delta,\rho}}, \qquad \wh u_\delta(s,t) := \ol u(s).
\end{align}

Our approach is inspired by \cite{isom}, but we deviate from that approach by working with a special connection which allows us to drop boundary terms from the \(H^2\)-estimate \cite[Lemma~3.2.1(b)]{isom}.
This special connection is constructed in the following lemma, which is a generalization to the immersed case of a connection constructed in \cite{w:ss}.

\begin{lemma}\label{lem:conn}
There is an assignment \(\delta \mapsto \nabla_\delta = (\nabla_{02,\delta}, \wh \nabla_\delta)\) that sends \(\delta \in (0,\rho/4]\) to a pair of connections \(\nabla_{02,\delta}\) resp.\ \(\wh \nabla_\delta\) on \(u_{02,\delta}^*\rT M_{02} \to Q_{02,\delta,\rho}\) resp.\ \(\wh u_\delta^*\rT M_{0211} \to \wh Q_{\delta,\rho}\) such that the following hold:
\begin{itemize}
\item Parallel transport under \(\wh\nabla_\delta\) preserves \(\wh u_\delta^*\rT(L_{01}\times L_{12})^T\) and \(\wh u_\delta^*\rT(M_{02}\times\Delta_{M_1})\);

\item For a section \(\wh\zeta \in \Gamma(\wh u_\delta^*\rT(M_{02}\times\Delta_{M_1}))\)
we have
\(\nabla_{02,\delta,s} (p \circ \wh\zeta) = p \circ \wh\nabla_{\delta,s}\wh\zeta\),
where
\(p\colon \wh u_\delta^*\rT(M_{02}\times\Delta_{M_1})\to u_{02,\delta}^*\rT M_{02}|_{t=0}\)
is the projection;

\item For \(\delta_1 < \delta_2\), the restrictions of \(\nabla_{\delta_1}, \nabla_{\delta_2}\) agree: \begin{align*}
\nabla_{02,\delta_1}|_{Q_{02,\delta_2,\rho}} = \nabla_{02,\delta_2}, \qquad \wh\nabla_{\delta_2}|_{\wh Q_{\delta_1,\rho}} = \wh\nabla_{\delta_1}.
\end{align*}
\end{itemize}
\end{lemma}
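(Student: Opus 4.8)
The plan is to exploit the one structural feature that makes the lemma go through: the reference map $\wh u_\delta(s,t)=\ol u(s)$ is independent of $t$, and $\ol u$ takes values in $(L_{01}\times L_{12})^T\cap(M_{02}\times\Delta_{M_1}) = L_{01}\times_{M_1}L_{12}$. Write $\mathcal{V}:=\wh u_\delta^*\rT(L_{01}\times L_{12})^T$, $\mathcal{W}:=\wh u_\delta^*\rT(M_{02}\times\Delta_{M_1})$, and let $\mathcal{K}:=\ker p\subset\mathcal{W}$ be the bundle of ``$\Delta_{M_1}$-directions''; the product structure of $M_{02}\times\Delta_{M_1}$ gives a canonical splitting $\mathcal{W}=\mathcal{K}\oplus\mathcal{W}_{02}$ with $p|_{\mathcal{W}_{02}}\colon\mathcal{W}_{02}\xrightarrow{\ \sim\ }u_{02,\delta}^*\rT M_{02}|_{t=0}$. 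The immersed-composition hypothesis fixed at the start of \S\ref{sec:convergence} is exactly transversality of the intersection $L_{01}\times_{M_1}L_{12}$, so $\mathcal{V}\cap\mathcal{W}$ has locally constant rank along $\ol u$ and $\mathcal{V}+\mathcal{W}$ is everything; and all of $\mathcal{V},\mathcal{W},\mathcal{K}$ are pulled back along $\pr_1\colon\wh Q_{\delta,\rho}\to(-\rho,\rho)$ from bundles over the contractible interval $(-\rho,\rho)$, hence are trivial and admit global complements.

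The core of the construction is a connection $\ol\nabla$ on $\ol u^*\rT M_{0211}\to(-\rho,\rho)$ that is block-diagonal for a decomposition $\ol u^*\rT M_{0211}=\mathcal{K}\oplus\mathcal{W}_{02}\oplus\mathcal{V}'$, where $\mathcal{V}'$ is any complement of $\mathcal{W}=\mathcal{K}\oplus\mathcal{W}_{02}$. I would then set $\wh\nabla_\delta:=\pr_1^*\ol\nabla$, the connection on $\wh u_\delta^*\rT M_{0211}$ whose $\partial_s$-component is $\ol\nabla$ and whose $\partial_t$-component is trivial in the $t$-constant frame. Parallel transport of $\wh\nabla_\delta$ in the $\partial_t$-direction is then the identity, hence preserves every $t$-independent subbundle --- in particular $\mathcal{V}$ and $\mathcal{W}$ --- which is what the second-fundamental-form terms at the two seams in \S\ref{ss:estimate} see, so this is the property behind bullet~(i) (and behind the disappearance of the boundary terms of \cite[Lemma~3.2.1]{isom}; compare \cite{w:ss}). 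Define $\nabla_{02,\delta}$ on $u_{02,\delta}^*\rT M_{02}$ by taking, on $\{t=0\}$ and in the $\partial_s$-direction, the connection induced from $\ol\nabla|_{\mathcal{W}_{02}}$ through $p|_{\mathcal{W}_{02}}$, and extending arbitrarily (by a metric connection) into $\{t>0\}$; since $\ol\nabla$ preserves $\mathcal{K}$ and $\mathcal{W}_{02}$ separately, for $\wh\zeta\in\Gamma(\mathcal{W})$ we get $\wh\nabla_{\delta,s}\wh\zeta=\ol\nabla_s\wh\zeta\in\Gamma(\mathcal{W})$ and $p\circ\wh\nabla_{\delta,s}\wh\zeta=\nabla_{02,\delta,s}(p\circ\wh\zeta)$, which is bullet~(ii) --- and this is precisely the compatibility needed so that the $t=0$ boundary terms of the $w_{02}$- and $\wh w$-estimates cancel through the coupling $\zeta_{02}|_{t=0}=p\circ\wh\zeta|_{t=0}$. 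Bullet~(iii) is then automatic: $\ol\nabla$, and hence $\wh\nabla_\delta=\pr_1^*\ol\nabla$, does not depend on $\delta$; and building the interior extension of $\nabla_{02,\delta}$ once on $Q_{02,0,\rho}$ and taking $\nabla_{02,\delta}$ to be its restriction gives the displayed identities.

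The step I expect to cause the most trouble is not the construction but its interface with \S\ref{ss:estimate}: one has to confirm that a $t$-independent connection genuinely suffices, i.e.\ that the $H^2$-estimate for the folded maps invokes only $\partial_t$-parallel transport at the seams (so $t$-independence kills the boundary terms), and not also $\partial_s$-parallel transport along $\{t=\delta\}$ preserving $\mathcal{V}$. The latter would be incompatible with bullet~(ii): because $T(L_{01}\times_{M_1}L_{12})\cap\ker p$ can jump in rank (already when $L_{01}$ is the graph of a symplectomorphism), no connection on the interval $(-\rho,\rho)$ can preserve all three of $\mathcal{V}$, $\mathcal{W}$, $\mathcal{K}$, so if the stronger property is really needed one must instead --- still following \cite{w:ss} --- let the connection transition, over a fixed-width \emph{interior} band $[\eps_0',\eps_0'']\subset(0,\rho/4]$ rather than a $\delta$-proportional collar, from the $(\mathcal{W},\mathcal{K})$-adapted connection near $t=0$ to a $\mathcal{V}$-adapted one near $t=\rho/4$; placing the band in the interior keeps the connection and all its derivatives bounded independently of $\delta$, keeps $\wh\nabla_\delta$ the restriction of a single connection on $(-\rho,\rho)\times[0,\rho/4]$ (so bullet~(iii) still holds), and is harmless because every $\delta$ that matters in Corollary~\ref{cor:rescale} and Theorem~\ref{thm:nonfoldedstripshrink} eventually lies below $\eps_0'$, where $\wh\nabla_\delta$ is exactly the clean $t$-independent connection above. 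Finally, it is worth isolating that the constant-rank statements really use transversality of the fiber product, not merely that $L_{01}\circ L_{12}$ is an abstract immersed Lagrangian.
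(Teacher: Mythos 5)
Your main construction does not work, and the concern you raise in your last paragraph is realized: the Green's-formula argument behind Lemma~\ref{lem:k=0ellipticineq} cancels the boundary term on $\{t=\delta\}$ precisely because $\wh\nabla_s\wh\zeta$ stays tangent to $(L_{01}\times L_{12})^T$ whenever $\wh\zeta$ is, so the $\partial_s$-component of $\wh\nabla_\delta$ --- not merely the $\partial_t$-component --- must preserve what you call $\mathcal{V}$. A connection block-diagonal for $\mathcal{K}\oplus\mathcal{W}_{02}\oplus\mathcal{V}'$ does not do that.

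Where you then go wrong is in concluding that this requirement is irreconcilable with bullet~(ii). You claim that $T(L_{01}\times_{M_1}L_{12})\cap\ker p$ can jump in rank; in fact it is identically zero. Immersed composition makes $p$ injective on $\wh\Lambda_{02}:=\mathcal{V}\cap\mathcal{W}$ (this is \cite[Lemma~2.0.5]{quiltfloer}), while $\mathcal{K}$ is by definition the kernel of $p$, so $\wh\Lambda_{02}\cap\mathcal{K}=\{0\}$ for every $s$. The paper's proof exploits precisely this triviality to build a \emph{four}-term splitting $\ol u^*\rT M_{0211} = C_2\oplus\wh\Lambda_{02}\oplus C_1\oplus\mathcal{K}$, with $C_2$ a complement of $\wh\Lambda_{02}$ in $\mathcal{V}$ and $C_1$ a complement of $\wh\Lambda_{02}\oplus\mathcal{K}$ in $\mathcal{W}$; the product connection for this decomposition simultaneously preserves $\mathcal{V}=C_2\oplus\wh\Lambda_{02}$, $\mathcal{W}=\wh\Lambda_{02}\oplus C_1\oplus\mathcal{K}$, and $\mathcal{K}$, which gives bullet~(i), and bullet~(ii) then follows from a short computation using only $\wh\nabla_{\delta,s}(\Gamma(\mathcal{K}))\subset\Gamma(\mathcal{K})$. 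Your transition-band fallback also fails for this same structural reason: the domain of $\wh\nabla_\delta$ is $(-\rho,\rho)\times[0,\delta]$ and the seam sits at $t=\delta\to 0$, so for the small $\delta$ relevant to Theorem~\ref{thm:nonfoldedstripshrink} the whole $t$-interval lies below your $\eps_0'$, where your fallback is still only $(\mathcal{W},\mathcal{K})$-adapted and the boundary term survives.
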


\begin{proof}
Fix metrics on \(u_{02}^*\rT M_{02}\) and \(\ol u^*\rT M_{0211}\) so that given a smooth subbundle, we may form its orthogonal complement.
For any fixed \(s \in (-\rho,\rho)\) we denote:
\begin{gather*}
\Lambda_{0211} := \rT_{\ol u(s)} (L_{01} \times L_{12})^T, \quad\! \Delta := \rT_{\ol u(s)} (M_{02} \times \Delta_{M_1}), \quad\! \wh \Lambda_{02} := \Lambda_{0211} \cap \Delta, \quad\! \Lambda_{02} := \rT\pi_{02, \ol u(s)}(\wh \Lambda_{02}).
\end{gather*}
The transversality of \(L_{01} \times L_{12} \pitchfork M_0 \times \Delta_{M_1} \times M_2\) implies
\(\wh \Lambda_{02} = \rT_{\ol u(s)} \wh L_{02}\),
so the projection from \(\wh \Lambda_{02}\) to \(\Lambda_{02}\) is injective (see e.g.\ \cite[Lemma~2.0.5]{quiltfloer}).
Hence the intersection of \(\wh \Lambda_{02}\) and \(\{0\} \times \rT_{(\ol u_1(s), \ol u_1(s))}\Delta_{M_1}\) is trivial.
It follows that if we let \(C_1\) denote the complement of \(\wh \Lambda_{02} + (\{0\} \times \rT_{(\ol u_1(s), \ol u_1(s))}\Delta_{M_1})\) in \(\Delta\), the diagonal decomposes as \(\Delta = \wh \Lambda_{02} \oplus C_1 \oplus (\{0\} \times \rT_{(\ol u_1(s), \ol u_1(s))}\Delta_{M_1})\).
Let \(C_2\) be the complement of \(\wh \Lambda_{02}\) in \(\Lambda_{0211}\).
Transversality implies
\(\rT_{\ol u(s)} M_{0211} = \Lambda_{0211} + \Delta\), so we have deduced the following decomposition: \begin{align*}
\rT_{\ol u(s)} M_{0211} = C_2 \oplus \wh \Lambda_{02} \oplus C_1 \oplus (\{0\} \times \rT_{(\ol u_1(s), \ol u_1(s))}\Delta_{M_1}).
\end{align*}
The subspace \(\Lambda_{0211}\) (resp.\ \(\Delta\)) is given by the sum of the first two factors (resp.\ the sum of the last three factors) in this decomposition.
Therefore, if we choose connections on each of these four subbundles and set \(\ol\nabla\) to be the product connection, then extend \(\ol\nabla\) to a connection \(\wh \nabla_\delta\) on \(\wh u_\delta^*\rT M_{0211} \to \wh Q_{\delta,\rho}\) by defining \(\wh\nabla_{\delta,s}((s,t) \mapsto \wh\zeta(s,t)) := \ol\nabla_s(s \mapsto \wh\zeta(s,t))\) and defining \(\wh\nabla_{\delta,t}((s,t) \mapsto \wh\zeta(s,t)) := \nabla_{\wh g,t}(t \mapsto \wh\zeta(s,t))\) in terms of the Levi-Civita connection \(\nabla_{\wh g}\), \(\wh\nabla_\delta\) satisfies the first bullet.

Denote by \(p\colon \ol u^* \rT(M_{02} \times \Delta_{M_1}) \to u_{02}^* \rT M_{02}|_{t = 0}\) projection
and by \(i\colon u_{02}^* \rT M_{02}|_{t = 0} \to u_{02}^* \rT(M_{02} \times \Delta_{M_1})\) the inclusion defined by sending \(v \in T_{u_{02}(s,0)}M_{02}\) to \((v, 0) \in T_{\ol u(s)}(M_{02} \times \Delta_{M_1})\).
Define a connection \(p_*\ol \nabla\) on \(u_{02}^* \rT M_{02}|_{t = 0}\) by \((p_* \ol \nabla)(\zeta_{02}) := p \circ \ol \nabla (i\circ \zeta_{02})\).
Extend \(p_*\ol \nabla\) in any way to a connection \(\nabla_{02}\) on \(u_{02}^*\rT M_{02}\); for \(\delta \in (0,\rho/4]\), define \(\nabla_{02,\delta} := \nabla_{02}|_{Q_{02,\delta,\rho}}\).
The second bullet now follows from a computation, in which \((\zeta_{02}, \wh \zeta_1, \wh\zeta_1)\) is an arbitrary section of \(\wh u_\delta^*\rT(M_{02}\times\Delta_{M_1})\):
\begin{align*}
p \circ \wh \nabla_{\delta,s} \wh\zeta \;=\; p \circ \wh \nabla_{\delta,s} (\wh\zeta_{02}, \wh\zeta_1, \wh\zeta_1) \;=\; p \circ \wh \nabla_{\delta,s} (i \circ p \circ \wh\zeta) + p \circ \wh \nabla_{\delta,s} (0, \wh\zeta_1, \wh\zeta_1) \;= \nabla_{02,\delta,s}(p \circ \wh\zeta).
\end{align*}
The term \(p \circ \wh \nabla_{\delta,s}(0,\wh\zeta_1,\wh\zeta_1)\) in the third quantity vanishes since the subbundle \(\{0\} \times \rT_{(\wh w_{\delta,1}, \wh w_{\delta,1})} \Delta_{M_1}\) is preserved under parallel transport by \(\wh \nabla_{\delta,s}\).
\end{proof}

\noindent We will use the connections \(\nabla_\delta\) just constructed throughout the rest of \S\ref{ss:estimate}.
Due to the third property in Lemma~\ref{lem:conn}, it is unambiguous to drop the subscript and refer to \(\nabla_\delta\) simply as \(\nabla\).
Note that this pair of connections induce connections on the pullbacks by \(u_{02,\delta}\) or \(\wh u_\delta\) of any tensor bundle of \(\rT M_{02}\) or \(\rT M_{0211}\) in a canonical way.

Before we state the elliptic estimate Lemma~\ref{lem:masterestimate}, we need to define our function spaces and delbar operators.

\begin{definition} \label{def:functionspaces}
Fix \(r \in (0,\rho)\), \(\delta > 0\), and \(k \geq 2\).
Define the space of sections \(\mathbf{\Gamma_{u_\delta}^k}(\bQ_{\delta,r})\) and the norms \(\mathbf{ \| - \|_{H^k(\bQ_{\delta,r})}, \| - \|_{\wt H^k(\bQ_{\delta,r})} }\) as follows.
\begin{itemize}
\item Define \(\mathbf{\Gamma_{u_\delta}^k}(\bQ_{\delta,r})\) by: \begin{align*}
\Gamma_{u_\delta}^k(\bQ_{\delta,r})
:= \left\{ \left.\left(
\begin{aligned}
\xi_{02}\in H^k(Q_{02,\delta,r},u_{02,\delta}^* \rT M_{02}), \\
\wh\xi\in H^k(\wh Q_{\delta,r}, \wh u_\delta^* \rT M_{0211})
\end{aligned} 
\right) \right|
\eqref{eq:linearizedBCs}
\right\} ,
\end{align*}
where \eqref{eq:linearizedBCs} denotes the following linearized boundary conditions: \begin{align} \label{eq:linearizedBCs}
(\xi_{02}(s,0), \wh \xi(s,0)) \in \rT \Delta_{M_{02}} \times \rT \Delta_{M_1}, \qquad \wh\xi(s,\delta) \in \rT(L_{01} \times L_{12})^T \qquad \forall \: s \in (-r,r).
\end{align}

\item Define two norms \( \mathbf{ \| - \|_{H^k(\bQ_{\delta,r})}, \| - \|_{\wt H^k(\bQ_{\delta,r}) } } \) on \(\Gamma^k_{u_\delta}\) by: \begin{align*}
\| (\xi_{02}, \wh \xi) \|_{H^k(\bQ_{\delta,r})}^2 &:= \| \xi_{02} \|_{H^k(Q_{02,r},u_{02,\delta}^*\rT M_{02})}^2 + \|\wh \xi\|_{H^k(\wh Q_{\delta,r},\wh u_\delta^*\rT M_{0211})}^2, \\
\| (\xi_{02}, \wh \xi) \|_{\wt H^k(\bQ_{\delta,r})}^2 &:= \|(\xi_{02},\wh\xi)\|_{H^k(\bQ_{\delta,r})}^2 + \sum_{l=0}^{k-2} \|(\nabla^l \xi_{02}, \nabla^l \wh \xi)\|_{\cC^0H^1(\bQ_{\delta,r})}^2 \\
&:= \|(\xi_{02},\wh\xi)\|_{H^k(\bQ_{\delta,r})}^2 + \sum_{l=0}^{k-2} \Bigl( \sup_{t \in [0,r-2\delta) } \| \nabla_{02}^l\xi_{02}( -, t )\|_{H^1((-r, r),u_{02,\delta}(-,t)^*\rT M_{02})}^2 \\
&\hspace{2.5in} + \sup_{t \in [0, \delta]} \| \wh \nabla^l\wh \xi( -, t ) \|_{H^1((-r,r),\wh u_\delta(-,t)^*\rT M_{0211})}^2 \Bigr) .
\end{align*}
\end{itemize}
\end{definition}
\noindent Note that \(\|-\|_{\wt H^k(\bQ_{\delta,r})}\) is a well-defined norm on \(\Gamma^k_{u_\delta}(\bQ_{\delta,r})\) due to the embedding \(H^1 \hookrightarrow \cC^0\) for one-dimensional domains.
However, the constant in the bound \(\|-\|_{\wt H^k(\bQ_{\delta,r})} \leq C(\delta,r)\|-\|_{H^k(\bQ_{\delta,r})}\) is \(\delta\)-dependent.

In \cite{isom}, Wehrheim--Woodward introduced an exponential map with quadratic corrections, which allowed them to treat the Lagrangian boundary conditions as totally geodesic.
Wehrheim--Woodward assumed the composition \(L_{01} \circ L_{12}\) to be embedded, but their construction of the corrected exponential map only used the immersedness of that composition.
We may therefore import their corrected exponential map into our setting:

\begin{definition} \label{def:exp}
Given \(r > 0\) and \(\delta > 0\), define the \textbf{corrected exponential map \(\mathbf{e_{u_\delta}}\)} and its {\bf linearization \(\mathbf{\rd e_{u_\delta}}\)} and {\bf\(\mathbf{s}\)- and \(\mathbf{t}\)-derivatives as follows}.
\begin{itemize}
\item Let \(e_{u_\delta} = (e_{u_{02,\delta}}, e_{\wh u_\delta})\) be the pair of maps defined in \cite[Lemma 3.1.2]{isom}; \(e_{u_\delta}\) sends \(\zeta \in \Gamma^2_{u_\delta}(\bQ_{\delta,r})\) with \(\|\zeta\|_{\cC^0(\bQ_{\delta,r})}\) sufficiently small to a pair of maps \(e_{u_\delta}(\zeta) = (e_{u_{02,\delta}}(\zeta_{02}), e_{\wh u_\delta}(\wh \zeta)) \) satisfying \eqref{eq:foldedmaps}.

\item For \(p_{02} \in u_{02,\delta}^*\rT M_{02}|_{(s,t)}\), \(\rd e_{u_{02,\delta}}(p_{02})\colon u_{02,\delta}^*\rT M_{02}|_{(s,t)} \to \rT_{e_{u_{02,\delta}}(p_{02})}M_{02}\) is defined by including the fiber \(u_{02,\delta}^*\rT M_{02}|_{(s,t)}\) into \(\rT_{p_{02}}u_{02,\delta}^*\rT M_{02}\) as the vertical vectors, then postcomposing with the tangent map \(\rT (e_{u_{02,\delta}})_{p_{02}}\colon \rT_{p_{02}}u_{02,\delta}^*\rT M_{02} \to \rT_{e_{u_{02,\delta}}(p_{02})} M_{02}\).
The linearization \(\rd e_{\wh u_\delta}(\wh p)\) is defined analogously.

\item For \(p_{02} \in u_{02,\delta}^*\rT M_{02}|_{(s,t)}\), define \(\mathbf{\rD_s e_{w_{02}}(p_{02}) \in \rT_{e_{w_{02}}(p_{02})}M_{02} }\) to be the vector gotten by choosing a flat section \(\sigma\) of \(w_{02}^*\rT M_{02}|_{(s-\eps,s+\eps)\times\{t\}}\) for \(\eps\) small, then setting \(\rD_s e_{w_{02}}(p_{02}) := \rT_s (e_w(\sigma)) (\partial_s)\).
The derivatives \(\rD_t e_{w_{02}}(p_{02}), \rD_s e_{\wh w}(\wh p), \rD_t e_{\wh w}(\wh p)\) are defined analogously, and each of these derivatives depends smoothly on the argument \(p_{02}\) or \(\wh p\).
\end{itemize}
\end{definition}

\noindent This exponential map will allow us to define fiberwise complex structures in the following, which are parametrized by vector fields rather than by maps.

In the following definition of the linear delbar operator, we must go into coordinates.
Fix \(\delta > 0\) and a coherent collection \(\bj = ((j_0, j_2), (j_0', j_2', j_1', j_1))\) of complex structures on \(\ol\bQ_{\delta,\rho}\).
Then \(\bj\) induces via \eqref{eq:Cscoordsdef} two pairs of endomorphisms \(A = (A_{02}, \wh A)\), \(C = (C_{02}, \wh C)\) of \(u_{02,\delta}^*\rT M_{02}, \wh u_\delta^*\rT M_{0211}\), with \(C_{02}, \wh C\) defined as follows and \(A_{02}, \wh A\) defined in analogous fashion: \begin{gather}
C_{02}(s,t)\colon \rT_{u_{02,\delta}(s,t)}M_{02} \to \rT_{u_{02,\delta}(s,t)}M_{02}, \qquad (v_0, v_2) \mapsto (c_0(s,t)v_0, c_2(s,t)v_2), \label{eq:endos} \\
\wh C(s,t)\colon \rT_{\wh u_\delta(s,t)}M_{0211} \to \rT_{\wh u_\delta(s,t)}M_{0211}, \hspace{0.1in} (v_0', v_2', v_1', v_1) \mapsto (c_0'(s,t)v_0', c_2'(s,t)v_2', c_1'(s,t)v_1', c_1(s,t)v_1). \nonumber
\end{gather}
Note that the conditions \eqref{eq:Cscoordconds} (which are equivalent to the coherence conditions \eqref{eq:coherentCs0}, \eqref{eq:coherentCsdelta}) imply that for any \(s \in [-\rho,\rho]\), the endomorphisms \begin{align*}
\wh C(s,\delta), \qquad C_{02}(s,0) \times (\wh C|_{(u_0',u_2')^*\rT M_{02}})(s,0), \qquad (\wh C|_{(u_1',u_1)^*\rT M_{11}})(s,0)
\end{align*}
are scalar multiples of the identity; we will use this fact later in \S\ref{ss:estimate}.
In addition, the reader may find it helpful to note that in the case that $\bj$ is the standard collection $\bi$, $A_{02}$ and $\wh A$ are zero and $C_{02}$ and $\wh C$ are identity operators.

\begin{definition} \label{def:operators}
For \(\delta > 0\), \(r>0\), \(k \geq 2\), a coherent collection \(\bj\) of complex structures and a coherent pair of almost complex structures \(\bJ\) on \(\bQ_{\delta,r}\), and \(\xi \in \Gamma^2_{u_\delta}(\bQ_{\delta,r})\), define the \textbf{linear delbar operator \(\mathbf{\D_\xi}\)} to be the following map from \(H^1(Q_{02,\delta,r},u_{02,\delta}^*\rT M_{02}) \times H^1(\wh Q_{\delta,r},\wh u_\delta^*\rT M_{0211})\) to \(H^0(u_{02,\delta}^*\rT M_{02}) \times H^0(\wh u_\delta^*\rT M_{0211})\):
\begin{align*}
\D_\xi\zeta &:= A\nabla_s\zeta + C\nabla_t\zeta - \bJ(\xi)\nabla_s\zeta \\
&:= \bigl(A_{02}\nabla_{02,s}\zeta_{02} + C_{02}\nabla_{02,t}\zeta_{02} - J_{02}(\xi_{02})\nabla_{02,s}\zeta_{02}, \wh A\wh\nabla_s\wh \zeta + \wh C\wh\nabla_t\wh \zeta - \wh J(\wh \xi)\wh \nabla_s\wh \zeta\bigr),
\end{align*}
where \(\mathbf{J(\xi)}\) is the {\bf pulled-back complex structure} \begin{align*}
\bJ(\xi)(s,t) &:= \rd e_{u_\delta}(\xi(s,t))^{-1} \bJ(s,t,e_{u_\delta}(\xi(s,t))) \rd e_{u_\delta}(\xi(s,t)) \\
&:= \bigl(\rd e_{u_{02,\delta}}(\xi_{02}(s,t))^{-1} J_{02}(s,t,e_{u_{02,\delta}}(\xi_{02}(s,t))) \rd e_{u_{02,\delta}}(\xi_{02}(s,t)), \\
&\hspace{2.85in} \rd e_{\wh u_\delta}(\wh \xi(s,t))^{-1} \wh J(s,t,e_{\wh u_\delta}(\wh\xi(s,t))) \rd e_{\wh u_\delta}(\wh \xi(s,t))\bigr).
\end{align*}
\end{definition}

\noindent If \(\zeta = (\zeta_{02}, \wh \zeta)\) is a pair of sections in \(\Gamma^2_{u_\delta}(\bQ_{\delta,r})\), we can write \(\partial_s(e_w(\zeta))\) and \(\partial_t(e_w(\zeta)\) in terms of \(\rd e_{u_\delta}, \rD_s e_{u_\delta}, \rD_t e_{u_\delta}\): \begin{align}
\partial_s(e_{u_\delta}(\zeta)) &:= (\partial_s(e_{u_{02,\delta}}(\zeta_{02})), \partial_s(e_{\wh u_\delta}(\wh\zeta))) := ( \rd e_{u_{02,\delta}}(\zeta_{02})(\nabla_{02,s}\zeta_{02}) \nonumber \\
&\hspace{2.75in} + \rD_s e_{u_{02,\delta}}(\zeta_{02}), \rd e_{\wh u_\delta}(\wh \zeta)(\wh\nabla_s\wh \zeta) + \rD_s e_{\wh u_\delta}(\wh \zeta) ), \label{eq:expdecomp} \\
\partial_t(e_{u_\delta}(\zeta)) &:= (\partial_t(e_{u_{02,\delta}}(\zeta_{02})), \partial_t(e_{\wh u_\delta}(\wh\zeta))) := ( \rd e_{u_{02,\delta}}(\zeta_{02})(\nabla_{02,t}\zeta_{02}) \nonumber \\
&\hspace{2.75in} + \rD_t e_{u_{02,\delta}}(\zeta_{02}), \rd e_{\wh u_\delta}(\wh\zeta)(\wh \nabla_t\wh \zeta) + \rD_t e_{\wh u_\delta}(\wh \zeta) ). \nonumber
\end{align}
This decomposition allows us to relate the delbar operator \(\dbar_{\bJ,\bj}\) from \eqref{eq:delbar} with the linear delbar operator \(\D_\xi\) just defined: \begin{align} \label{eq:dbarvsD}
\dbar_{\bJ,\bj}(e_{u_\delta}(\zeta)) &= A\partial_s(e_{u_\delta}(\zeta)) + C\partial_t(e_{u_\delta}(\zeta)) - \bJ(s,t,e_{u_\delta}(\zeta))\partial_s(e_{u_\delta}(\zeta)) \nonumber \\
&= \rd e_{u_\delta}(\zeta)\bigl ( A\nabla_s\zeta + C\nabla_t\zeta - \rd e_{u_\delta}(\zeta)^{-1} \bJ(s,t,e_w(\zeta)) \rd e_{u_\delta}(\zeta) \nabla_s\zeta \bigr ) \\
&\hspace{1.8in} + \bigl ( A \, \rD_s e_{u_\delta}(\zeta) + C \, \rD_t e_{u_\delta}(\zeta) - \bJ(s,t,e_{u_\delta}(\zeta)) \rD_se_{u_\delta}(\zeta) \bigr ) \nonumber \\
&=: \rd e_{u_\delta}(\zeta) \D_\zeta \zeta + F(\zeta). \nonumber
\end{align}
The inhomogeneous term \(F\) depends smoothly on \(\zeta\), which is crucial for the proof of Theorem~\ref{thm:nonfoldedstripshrink}.

The following is the main result of \S\ref{ss:estimate}.
It generalizes \cite[Lemma 3.2.1]{isom}, which bounds the \(H^1\)-norm of \(\zeta\) when the domain complex structure is standard.

\begin{lemma} \label{lem:masterestimate}
There is a constant \(\eps > 0\) and for every \(C_0 > 0\), \(k \geq 0\), and \(r_1,r_2\) with \(0<r_1<r_2<\rho\) there is a constant \(C_1\) such that the inequality \begin{align} \label{eq:masterestimate}
\| \zeta\|_{\wt H^{k+1}(\bQ_{\delta,r_1})} \leq C_1 \bigl( \|\D_\zeta\zeta\|_{\wt H^k(\bQ_{\delta,r_2})} + \|\zeta\|_{H^0(\bQ_{\delta,r_2})} \bigr)
\end{align}
holds for any choice of \(\delta \in (0, r_1/4]\), a coherent collection \(\bj\) of complex structures on \(\ol \bQ_{\delta,\rho}\) with \(\|\bj - \bi\|_{\cC^0} \leq \eps\) and \(\|\bj - \bi\|_{\cC^{\max\{k,1\}}} \leq C_0\), a coherent pair \(\bJ\) of almost complex structures on \(\ol \bQ_{\delta,\rho}\) which are contained in a \(\cC^{\max\{k,1\}}\)-ball of radius \(C_0\) and which induce by \eqref{eq:metrics} metrics whose pairwise constants of equivalence are bounded above by \(C_0\), and a pair of sections \(\zeta \in \Gamma_{u_\delta}^{k+2}(\bQ_{\delta,r_2})\) with \(\|\zeta\|_{\cC^0} \leq \eps\), \(\|\zeta\|_{\cC^1} \leq C_0\), and \(\|\zeta\|_{\wt H^k(\bQ_{\delta,r_2})} \leq C_0\).
\end{lemma}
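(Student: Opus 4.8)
The plan is to deduce the $\wt H^{k+1}$-estimate \eqref{eq:masterestimate} by bootstrapping from the a priori $\wt H^k$-control (which is among the hypotheses), running an inner induction on the order of differentiation, with the base case a $\delta$-uniform $H^1$-estimate. The crucial device throughout is the special pair of connections $\nabla = (\nabla_{02}, \wh\nabla)$ produced by Lemma~\ref{lem:conn}: its parallel transport preserves the subbundles cut out by the linearized boundary conditions \eqref{eq:linearizedBCs}, and — thanks to the quadratic corrections built into the exponential map $e_{u_\delta}$ of Definition~\ref{def:exp} — those conditions become totally geodesic. Consequently, integrating by parts against sections obeying \eqref{eq:linearizedBCs} leaves no boundary terms, only curvature terms that are $\delta$-uniformly controlled; this is what makes the estimates width-independent. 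Since $\D_\zeta\zeta$ depends on $\zeta$ only through the coefficients $A, C$ (coming from $\bj$ via \eqref{eq:Cscoordsdef}) and the pulled-back complex structure $\bJ(\zeta)$, and since the latter is a smooth function of $\zeta$, I will at every stage treat $\D_\zeta$ as a linear operator whose coefficients are controlled in the relevant norms by the hypotheses $\|\bj - \bi\|_{\cC^{\max\{k,1\}}} \le C_0$, $\|\zeta\|_{\cC^1} \le C_0$, $\|\zeta\|_{\wt H^k(\bQ_{\delta,r_2})} \le C_0$ (together with $\wt H^k \hookrightarrow \cC^{k-2}$ from Lemma~\ref{lem:improvedsobolev}).

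\textbf{Base case: the $\delta$-uniform $H^1$-estimate.} Since $\wt H^1 = H^1$, the base case asserts $\|\zeta\|_{H^1(\bQ_{\delta,r_1})} \le C_1(\|\D_\zeta\zeta\|_{H^0(\bQ_{\delta,r_2})} + \|\zeta\|_{H^0(\bQ_{\delta,r_2})})$. After multiplying by a cutoff in the $s$-variable interpolating between $r_1$ and $r_2$ (whose derivative generates the $\|\zeta\|_{H^0}$-term), one expands $\int |\D_\zeta\zeta|^2$; the smallness of $A$ and $C - \id$ (from $\|\bj - \bi\|_{\cC^0} \le \eps$) and of $\|\zeta\|_{\cC^0} \le \eps$ lets us absorb the lower-order-in-$\nabla\zeta$ contributions into the principal part $\nabla_t\zeta - \bJ(\zeta)\nabla_s\zeta$. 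Expanding $|\nabla_t\zeta - \bJ(\zeta)\nabla_s\zeta|^2$ produces a cross term which, written as a total derivative and integrated by Stokes, gives boundary contributions on $\{t = 0\}$ and $\{t = \delta\}$ plus a curvature commutator; the former vanish because $\zeta$ lies in the parallel, totally geodesic subbundles of \eqref{eq:linearizedBCs}, and the latter is bounded by $\|\zeta\|_{H^0}\|\nabla\zeta\|_{H^0}$ with a constant depending only on the $\cC^1$-data. The identity $\nabla_{02,s}(p\circ\wh\zeta) = p\circ\wh\nabla_s\wh\zeta$ of Lemma~\ref{lem:conn} is what lets the wide piece $\zeta_{02}$ on $Q_{02,\delta,\rho}$ and the thin piece $\wh\zeta$ on $\wh Q_{\delta,\rho}$ be handled coherently across the seam $t = 0$. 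Because shrinking $\delta$ only shrinks the domain, no constant here degenerates as $\delta \to 0$.

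\textbf{Bootstrapping to $\wt H^{k+1}$.} Given the base case and the a priori $\wt H^k$-control, I would differentiate $\D_\zeta\zeta = \eta$ by $\nabla_s^m$ for $m \le k+1$. Since $s$ runs along the seam and boundary and $\nabla$'s $s$-parallel transport preserves the boundary subbundles (first bullet of Lemma~\ref{lem:conn}), $\nabla_s^m\zeta$ again satisfies \eqref{eq:linearizedBCs}, so the base-case estimate applies to it with right-hand side $\nabla_s^m\eta + [\D_\zeta, \nabla_s^m]\zeta$; the commutator is a sum of products of at most $m$ covariant derivatives of $\zeta$ with covariant derivatives of $A, C, \bJ(\zeta)$ of order $\le k$, controlled by the $\cC^{\max\{k,1\}}$-bounds on $\bj, \bJ$ and by $\|\zeta\|_{\wt H^k} \le C_0$ — here the $\cC^0 H^1$-summands of the $\wt H^k$-norm are exactly what make these products estimable in $H^0$ on the thin strip, where ordinary Sobolev multiplication carries $\delta$-dependent constants. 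Solving the equation for the transverse derivative, $\nabla_t\zeta = C^{-1}(\bJ(\zeta)\nabla_s\zeta - A\nabla_s\zeta + \eta)$ ($C$ is invertible since $\cC^0$-close to $\id$), and iterating, converts these $s$-heavy bounds into bounds on all mixed derivatives up to order $k+1$, yielding the $H^{k+1}$-part of \eqref{eq:masterestimate} with the $\nabla_s^m\eta$-terms supplied by $\|\D_\zeta\zeta\|_{\wt H^k(\bQ_{\delta,r_2})}$. For the $\cC^0 H^1$-summands $\sum_{l=0}^{k-1}\|\nabla^l\zeta\|_{\cC^0 H^1(\bQ_{\delta,r_1})}$ of $\wt H^{k+1}$: on the non-shrinking wide part this follows from the $H^{k+1}$-bound via $H^1_t H^1_s \hookrightarrow \cC^0_t H^1_s$ (since $l + 2 \le k+1$); on the thin strip one integrates in $t$ from the seam $t = 0$, using that $\wh u_\delta$ is $t$-independent so $\wh\nabla_t$ is essentially $\partial_t$, that the $\int_0^\delta$ tails carry a favorable power of $\delta$, and that the seam value $\|\wh\nabla^l\wh\zeta(-,0)\|_{H^1_s}$ is controlled — its $M_{02}$-component equals $\nabla_{02}^l\zeta_{02}(-,0)$ (via the seam and the projection-compatibility of $\nabla$), bounded by the wide-part trace, while for $l \le k-2$ the full seam value already belongs to the a priori $\wt H^k$-norm.

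\textbf{The main obstacle.} The serious difficulty is the $\delta$-uniformity on the thin strip $\wh Q_{\delta,\rho}$ — both the boundary-term-free integration by parts there and the control of the $\cC^0 H^1$-summands as $\delta \to 0$. What makes this possible is the combination of (i) the special connection of Lemma~\ref{lem:conn}, engineered so that its parallel transport simultaneously preserves $\wh u_\delta^*\rT(L_{01}\times L_{12})^T$, $\wh u_\delta^*\rT(M_{02}\times\Delta_{M_1})$ and the relevant diagonal subbundles and so that the corresponding Lagrangian conditions are totally geodesic, and (ii) the transversality underlying immersed composition, which forces those two Lagrangian conditions to meet at a $\delta$-uniform angle. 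The bookkeeping is to track which error terms genuinely gain a power of $\delta$ (those living on the thin strip) versus which are merely $\delta$-bounded, and to verify that the resulting self-referential estimates on the thin strip close — the mixed norm $\wt H^k$ is designed precisely so that they do.
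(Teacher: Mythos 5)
Your proposal follows essentially the same route as the paper: the special pair of connections from Lemma~\ref{lem:conn}, together with the corrected exponential map of Definition~\ref{def:exp}, kill the boundary terms in a $\delta$-uniform $H^1$-estimate for sections satisfying \eqref{eq:linearizedBCs} (this is the paper's Lemma~\ref{lem:k=0ellipticineq}); one then bootstraps by differentiating with $\nabla_s^m$, which preserves the boundary conditions, and trades $t$-derivatives for $s$-derivatives via $\nabla_t\zeta = C^{-1}(\D_\zeta\zeta - (A-\bJ(\zeta))\nabla_s\zeta)$ (the paper's Lemma~\ref{lem:ellipticineq}, Steps 1a--1b); and the $\cC^0H^1$-summands of $\wt H^{k+1}$ are handled by a width-independent Sobolev inequality in the spirit of \cite[Lemma~3.1.4]{isom} (the paper's Lemma~\ref{lem:improvedsobolev}).

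However, there is one concrete obstruction that your sketch gestures at but does not actually resolve, and it is the single most delicate point of this estimate: the $k=3$ case. When you differentiate $\D_\zeta\zeta$ by $\nabla_s^3$, the commutator produces a term of the shape $\nabla^2\bigl(\bJ(\zeta)\bigr)\,\nabla^2\zeta$; since $\nabla^2\bJ(\zeta)$ is itself bilinear in $\nabla\zeta$ and linear in $\nabla^2\zeta$, one is left with a product of two second-order derivatives of $\zeta$. The $\wt H^3$-norm you are granted as a priori data only controls $\nabla^l\zeta$ in $\cC^0H^1$ for $l\le 1$, so neither factor is uniformly $\cC^0$; and on the thin strip $\wh Q_{\delta,\rho}$ the two-dimensional product estimate $H^1\cdot H^1 \to L^2$ carries a constant that blows up as $\delta\to 0$ because the domain fails a uniform cone condition. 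Your remark that ``the mixed norm $\wt H^k$ is designed precisely so that [the self-referential estimates on the thin strip] close'' is in the right spirit but is not automatic here. The paper resolves this (Steps 2b and 3 of Lemma~\ref{lem:ellipticineq}) by a specific two-part device: for small $\delta$, the $H^0$-norm on $\wh Q_{\delta,\rho}$ gains a factor $\delta^{1/2}$ over the $\cC^0H^1$-norm, which lets the bad term be written as $\delta^{1/2}\|\eta\nabla^3\zeta\|_{H^1}$ plus controlled quantities and then \emph{absorbed} into the left-hand side once $\delta$ drops below an explicit threshold; for $\delta$ bounded away from zero, the domains $\bQ_{\delta,r}$ do satisfy a uniform cone condition, so the standard $H^1\hookrightarrow L^4$ Sobolev multiplication applies with a constant depending only on the threshold. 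To make your proposal into a complete proof you would need to discover and carry out this absorption step; without it, the iteration does not close at $k=3$.

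Two smaller remarks. First, you should be a bit more careful about which connection preserves which subbundle: the first bullet of Lemma~\ref{lem:conn} gives parallel transport of $\wh u_\delta^*\rT(L_{01}\times L_{12})^T$ and $\wh u_\delta^*\rT(M_{02}\times\Delta_{M_1})$ under $\wh\nabla$, while the $02$-part is constrained only by the projection compatibility $\nabla_{02,s}(p\circ\wh\zeta)=p\circ\wh\nabla_s\wh\zeta$ at $t=0$; this is enough for the boundary terms at $t=0$ to cancel against each other (via $\om_{0211}|_{\rT M_{02}\times\rT\Delta_{M_1}}=-p^*\om_{02}$), but it is a cancellation between the wide and thin pieces rather than a vanishing of each separately. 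Second, the elimination of $\|\zeta\|_{\wt H^k}$ on the right-hand side in favor of $\|\zeta\|_{H^0}$, as the statement \eqref{eq:masterestimate} requires, needs a final induction in $k$ over a nested family of radii $r_1 < r_1' < \cdots < r_2$; this is implicit in your ``bootstrapping'' language but should be spelled out, since each application of the key estimate eats a little bit of the domain.
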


We begin by establishing \(\delta\)-independent Sobolev estimates for elements of \(\Gamma^k_{u_\delta}(\bQ_{\delta,r})\).

\begin{lemma} \label{lem:improvedsobolev}
Fix \(C_0 > 0\), \(k \geq 0\), and \(r_1,r_2\) with \(0<r_1<r_2<\rho\).
Then there is a constant \(C_1\) and a polynomial \(P\) such that the inequality \begin{align} \label{eq:improvedsobolev}
&\| \nabla^k \zeta \|_{\cC^0H^1(\bQ_{\delta,r}) } \leq C_1 \bigl( \|\zeta\|_{H^{k+2}(\bQ_{\delta,r}) } + \|\nabla^{k-1}\D_\xi\zeta\|_{\cC^0H^1(\bQ_{\delta,r})} \bigr) \nonumber \\
&\hspace{1.45in} + P \left(\sum_{l=1}^{k-1} \|\nabla^l\xi\|_{\cC^0H^1(\bQ_{\delta,r}) }\right) \left(\|\zeta\|_{H^{k+1}(\bQ_{\delta,r}) } + \sum_{l=0}^{k-2} \|\nabla^l\D_\xi\zeta\|_{\cC^0H^1(\bQ_{\delta,r}) }\right)
\end{align}
(where the term \(\|\nabla^{k-1}\D_\xi\zeta\|_{\cC^0H^1(\bQ_{\delta,r}) }\) is to be omitted when \(k=0\)) holds for any choice of \(\delta \in (0, r_1/4]\), \(r \in [r_1, r_2]\), a coherent collection \(\bj\) of complex structures on \(\ol \bQ_{\delta,\rho}\) with \(\|\bj - \bi\|_{\cC^k} \leq C_0\), a coherent pair \(\bJ\) of compatible almost complex structures on \(\bQ_{\delta,\rho}\) which are contained in a \(\cC^k\)-ball of radius \(C_0\) and which induce by \eqref{eq:metrics} metrics whose pairwise constants of equivalence are bounded above by \(C_0\), and pairs of sections \(\zeta, \xi \in \Gamma_{u_\delta}^{k+2}(\bQ_{\delta,r})\) with \(\|\xi\|_{\cC^1} \leq C_0\).
\end{lemma}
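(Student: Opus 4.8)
The plan is to bound the mixed norm $\|\nabla^k\zeta\|_{\cC^0 H^1(\bQ_{\delta,r})}$ by its Sobolev and $\D_\xi$-pieces, the only genuinely $\delta$-sensitive ingredient being the supremum over the \emph{thin} slab $t\in[0,\delta]$ of the $H^1_s$-norm of $\wh\nabla^k\wh\zeta$. On the wide domain $Q_{02,\delta,r}$ nothing is delicate: the $t$-interval $[0,r-2\delta)$ has length $\geq r_1/2>0$, so applying the fundamental theorem of calculus in $t$ to $\|\nabla_s^j\nabla_{02}^k\zeta_{02}(\cdot,t)\|_{L^2_s}$, $j=0,1$, directly yields $\|\nabla_{02}^k\zeta_{02}\|_{\cC^0 H^1(Q_{02,\delta,r})}\leq C_1\|\zeta_{02}\|_{H^{k+2}(Q_{02,\delta,r})}$ with $C_1$ independent of $\delta$. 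So the work is entirely on the thin part, where I combine two ideas: (1) use the linearized Cauchy--Riemann equation to trade $\wh\nabla_t$-derivatives for $\wh\nabla_s$-derivatives and derivatives of $\wh\D_{\wh\xi}\wh\zeta$; and (2) show that the trace of $\wh\zeta$ on $\{t=0\}$ is controlled, $\delta$-independently, by the trace of $\zeta_{02}$ on the wide domain together with immersedness.

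\textbf{Trading $\wh\nabla_t$ on the thin part.} Since $\bj$ consists of genuine, $\cC^k$-bounded complex structures, its scalar coefficients $c_\ell$ are bounded below in terms of $C_0$ (if $\|j_\ell\|_{\cC^0}$ is bounded then, using $j_\ell^2=-\Id$ and orientation-compatibility, the component $c_\ell$ of $j_\ell(\partial_s)$ satisfies $c_\ell\geq(1+a_\ell^2)/C(C_0)$), so the endomorphism $\wh C$ is invertible with $\|\wh C^{-1}\|\leq C(C_0)$; from Definition~\ref{def:operators} this gives $\wh\nabla_t\wh\zeta=\wh C^{-1}\wh\D_{\wh\xi}\wh\zeta-\wh C^{-1}(\wh A-\wh J(\wh\xi))\wh\nabla_s\wh\zeta$, with all coefficient blocks bounded in terms of $C_0$ (here $\|\wh\xi\|_{\cC^0}$ is small, so $\rd e_{\wh u_\delta}(\wh\xi)^{\pm1}$ is near the identity). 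Commuting $\wh\nabla_s$ past $\wh\nabla_t$ to bring every $\wh\nabla_t$ innermost (each commutation costs a curvature term of the fixed, bounded connection $\nabla$ from Lemma~\ref{lem:conn}, which is lower order) and substituting, one rewrites $\wh\nabla^k\wh\zeta$ as a sum of: (i) $P_0\,\wh\nabla_s^k\wh\zeta$ with $P_0$ a bounded endomorphism; (ii) terms $(\ast)\,\wh\nabla^{\leq k-1}(\wh\D_{\wh\xi}\wh\zeta)$; and (iii) terms whose coefficients are products of $\wh\nabla^{\geq1}\wh\xi$ (produced when $\wh\nabla_s$ falls on $\wh C^{-1}$, $\wh A$, or $\wh J(\wh\xi)$) times $\wh\nabla_s^{\leq k}\wh\zeta$ or $\wh\nabla^{\leq k-1}(\wh\D_{\wh\xi}\wh\zeta)$. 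Taking $\cC^0 H^1$-norms and using the multiplicative Sobolev inequalities on the one-dimensional $s$-slices, (ii) is absorbed into $\|\nabla^{k-1}\D_\xi\zeta\|_{\cC^0 H^1}$ and (iii) into $P\bigl(\sum_{l=1}^{k-1}\|\nabla^l\xi\|_{\cC^0 H^1}\bigr)\bigl(\|\zeta\|_{H^{k+1}}+\sum_{l=0}^{k-2}\|\nabla^l\D_\xi\zeta\|_{\cC^0 H^1}\bigr)$, so it only remains to bound $\|\wh\nabla_s^k\wh\zeta\|_{\cC^0 H^1(\wh Q_{\delta,r})}$.

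\textbf{The trace at $t=0$.} On the thin strip $\wh u_\delta(s,t)=\ol u(s)$ is $t$-independent, so $\wh\nabla_t$ is the ordinary $t$-derivative in the canonical trivialization of $\wh u_\delta^*\rT M_{0211}$ and $\wh\zeta(s,\cdot)$ is a path in the fixed vector space $\rT_{\ol u(s)}M_{0211}$. For $h(t):=\|\wh\nabla_s^k\wh\zeta(\cdot,t)\|^2_{H^1_s}$ one has $\sup_{[0,\delta]}h\leq h(0)+\int_0^\delta|h'|\leq h(0)+2\|\wh\zeta\|_{H^{k+1}(\wh Q_{\delta,r})}\|\wh\zeta\|_{H^{k+2}(\wh Q_{\delta,r})}$, so everything reduces to a $\delta$-independent bound on the trace $h(0)=\|\wh\nabla_s^k\wh\zeta(\cdot,0)\|^2_{H^1_s}$. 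Here I use that $\wh\zeta(s,0)\in V_s:=\rT_{\ol u(s)}(M_{02}\times\Delta_{M_1})$ and $\wh\zeta(s,\delta)\in W_s:=\rT_{\ol u(s)}(L_{01}\times L_{12})^T$ — these linearized seam conditions are stated at the \emph{fixed} point $\ol u(s)\in V_s\cap W_s$ and hence require no smallness — and that immersedness gives the $\nabla$-parallel, $s$-uniform splitting $\rT M_{0211}=C_2\oplus\wh\Lambda_{02}\oplus C_1\oplus(\{0\}\times\rT\Delta_{M_1})$ of Lemma~\ref{lem:conn}, with $W_s=C_2\oplus\wh\Lambda_{02}$, $V_s=\wh\Lambda_{02}\oplus C_1\oplus(\{0\}\times\rT\Delta_{M_1})$, and $\wh\nabla_s$ intertwining the projection $p$ to $M_{02}$ with $\nabla_{02,s}$. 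Write $q_1$ for the ($\nabla$-parallel) projection onto the last summand, so $q_1|_{W_s}=0$ and $p$ is an isomorphism on $\ker q_1\cap V_s$. Then $\wh\nabla_s^m\wh\zeta(\cdot,0)$ is recovered from its $p$-image $\nabla_{02,s}^m\zeta_{02}(\cdot,0)$ and its $q_1$-part, and the latter is small: $q_1(\wh\nabla_s^m\wh\zeta(\cdot,0))=q_1\bigl(\wh\nabla_s^m\wh\zeta(\cdot,0)-\wh\nabla_s^m\wh\zeta(\cdot,\delta)\bigr)=-\int_0^\delta q_1(\wh\nabla_s^m\wh\nabla_t\wh\zeta)\,\d t$, whence $\|q_1(\wh\nabla_s^m\wh\zeta(\cdot,0))\|_{L^2_s}\leq C\delta^{1/2}\|\wh\zeta\|_{H^{m+1}(\wh Q_{\delta,r})}$. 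Combining the cases $m=k,k+1$ with the trace theorem on the wide domain $Q_{02,\delta,r}$ (whose geometry near $\{t=0\}$ is $\delta$-independent), $h(0)\leq C\|\zeta_{02}\|^2_{H^{k+2}(Q_{02,\delta,r})}+C\delta\|\wh\zeta\|^2_{H^{k+2}(\wh Q_{\delta,r})}\leq C\|\zeta\|^2_{H^{k+2}(\bQ_{\delta,r})}$ since $\delta\leq r_1/4$. Together with the two reductions above this yields \eqref{eq:improvedsobolev}.

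\textbf{The main obstacle} is precisely the last step: obtaining $\delta$-independent control of the trace of $\wh\nabla_s^k\wh\zeta$ at $t=0$ even though the supremum defining the $\cC^0 H^1$-norm runs over an interval of vanishing length. The mechanism is that the $t=0$ seam ties the $M_{02}$-part of $\wh\zeta$ to $\zeta_{02}$ on the wide domain, while the $M_1$-part is forced to be $O(\delta^{1/2})$ by the $t=\delta$ seam together with immersedness (which is what makes $\{0\}\times\rT\Delta_{M_1}$ meet $W_s$ trivially); pushing this through all covariant $s$-derivatives is exactly what the special connection of Lemma~\ref{lem:conn} is engineered for. The care required is to verify that the splitting constants are uniform for $s$ in $\ol{(-r,r)}\subset(-\rho,\rho)$ and independent of $\bj,\bJ,\xi$ and of the size of $\zeta$ — the last of these being automatic, as the asserted inequality is homogeneous of degree one in $\zeta$.
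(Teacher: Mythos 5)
Your proposal is correct and takes a genuinely different route from the paper's. The paper's proof proceeds in two steps: first it establishes a weaker inequality~\eqref{eq:sobolev} whose right-hand side still carries the sum $\sum_{l=0}^{k-1}\|\nabla^l\zeta\|_{\cC^0H^1}$, by inducting on the number $n_t(\alpha)$ of $t$-indices in $\nabla_\alpha\zeta$ (trading $\nabla_t$ for $\nabla_s$ and $\D_\xi$); the base case $\alpha=(s,\dots,s)$ is reduced, via the $\nabla_s$-invariance of the boundary conditions from Lemma~\ref{lem:conn}, to the $k=0$ uniform Sobolev estimate of \cite[Lemma~3.1.4]{isom}, which the paper modifies from the embedded to the immersed setting using a pointwise estimate $|\wh\xi|\leq C(|\xi'_{02}|+|\xi'_1-\xi_1|+|\pi_{0211}^\perp\wh\xi|)$. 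The unwanted $\|\nabla^l\zeta\|_{\cC^0H^1}$ terms are then removed by a second induction on $k$. You instead split cleanly into the wide domain $Q_{02,\delta,r}$ (where a $\delta$-independent trace theorem handles everything with no Cauchy--Riemann input) and the thin strip $\wh Q_{\delta,r}$, and you replace the black-box \cite[Lemma~3.1.4]{isom} with a direct argument: control the trace $\|\wh\nabla_s^k\wh\zeta(\cdot,0)\|_{H^1_s}$ by decomposing along the $\nabla$-parallel splitting of Lemma~\ref{lem:conn} into the $p$-image (tied to the wide-domain trace of $\zeta_{02}$ by the second bullet of that lemma) and the $q_1$-part (which is $O(\delta^{1/2}\|\wh\zeta\|_{H^{\bullet+1}})$ since $q_1$ annihilates the $t=\delta$ boundary value --- this is exactly where immersedness enters, and is the same linear-algebra fact the paper uses in the pointwise estimate, differently packaged). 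Combined with the fundamental-theorem-of-calculus bound on $\sup_t h(t)$, this directly yields $\|\wh\nabla_s^l\wh\zeta\|_{\cC^0H^1(\wh Q_{\delta,r})}\leq C\|\zeta\|_{H^{l+2}}$ for all $l\leq k$, so the iterative removal step in the paper's Step~2 is unnecessary. Two things to tighten when writing this up: the bookkeeping in the ``trading $\wh\nabla_t$'' step needs care to verify that $\wh\nabla^{k-1}(\wh\D_{\wh\xi}\wh\zeta)$ really only appears with a coefficient bounded by $C_0$ (no $\wh\nabla^{\geq 1}\wh\xi$-factor), which a derivative count does give; and the differentiation $h'(t)$ picks up an extra term from the $t$-dependence of the metric $\wh g$ on the thin strip, which is harmless (it contributes $C_0\int_0^\delta h$, absorbed by Gr\"onwall with a $\delta$-uniform constant since $\delta\leq r_1/4$) but should not be silently dropped.
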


\noindent Here is the idea of the proof: \cite[Lemma~3.1.4]{isom} is a uniform Sobolev inequality for sections \(\zeta\) satisfying the linearized boundary conditions.
Since the special connection constructed in Lemma~\ref{lem:conn} preserves the linearized boundary conditions, \cite[Lemma~3.1.4]{isom} immediately gives a bound on \(\| \nabla_s^k\zeta \|_{\cC^0H^1(\bQ_{\delta,r}) }\).
To derive a bound on \(\| \nabla_\alpha\zeta \|_{\cC^0H^1(\bQ_{\delta,r}) }\) for \(\alpha \in \{s, t\}^k\), we trade indices using the operator \(\D_\xi\).

\begin{proof}
We prove this lemma in two steps: first, we prove a slightly different inequality, which has terms of the form \(\|\nabla^l \zeta\|_{\cC^0H^1}\) on the right-hand side.
Then, we prove the desired inequality by inductively removing these unwanted terms.

Throughout this proof, \(C_1\) and \(P\) will denote a \(\delta\)-independent constant and \(\delta\)-independent polynomial that may change from line to line.

\begin{step1}
We prove the following inequality:
\begin{align} \label{eq:sobolev}
\|\nabla^k\zeta\|_{\cC^0H^1} &\leq C_1\left( \|\zeta\|_{H^{k+2}} + \|\nabla^{k-1} \D_\xi \zeta \|_{\cC^0H^1} + P \left(\sum_{l = 1}^{k-1} \|\nabla^l \xi\|_{\cC^0H^1} \right) \cdot \sum_{l = 0}^{k - 1} \| \nabla^l\zeta \|_{\cC^0H^1}\right).
\end{align}
\end{step1}

\noindent We begin by proving the \(k = 0\) case of \eqref{eq:sobolev}, which is essentially a consequence of \cite[Lemma 3.1.4]{isom}.
One modification must be made to that lemma: we must relax the hypothesis that the composition \(L_{01} \circ L_{12}\) is embedded to the hypothesis that this composition is immersed.
To make this modification, change the proof of \cite[Lemma 3.1.4]{isom} like so: instead of using \cite[Lemma 3.1.3(c)]{isom}, use the fact that for \(\wh\xi = (\xi_{02}', \xi_1', \xi_1) \in \cC^\infty((-r,r), \ol u^*TM_{0211})\), \begin{align*}
\|\wh \xi\|_{H^1((-r,r))} \leq C_1\bigl(\|\xi'_{02}\|_{H^1((-r,r))} + \|\xi_1' - \xi_1\|_{H^1((-r,r))} + \|\pi_{0211}^\perp\wh\xi\|_{H^1((-r,r))}\bigr),
\end{align*}
where \(\pi_{0211}^\perp\) is the projection onto the orthogonal complement of the tangent space of \((L_{01} \times L_{12})^T\).
This inequality follows from the pointwise estimate \(|\wh \xi| \leq C(|\xi_{02}'| + |\xi_1' - \xi_1| + |\pi_{0211}^\perp\wh\xi|)\),
which can be proved like \cite[Lemma 3.1.3b]{isom}.

Next, fix \(k \geq 1\); let us prove \eqref{eq:sobolev} for this \(k\).
Let \(\zeta,\xi\) be sections in \(\Gamma^{k+2}_{u_\delta} \), and assume that the other hypotheses of the lemma are satisfied.
We will show that for every tuple \(\alpha = (\alpha_1, \ldots, \alpha_k) \in \{s, t\}^k\), there is a polynomial \(P_\alpha\) so that the following inequality holds:
\begin{align}
\label{eq:induct_bound}
\| \nabla_\alpha \zeta \|_{\cC^0H^1} &\leq C_1\left(\|\zeta\|_{H^{k+2}} + \|\nabla^{k-1} \D_\xi \zeta \|_{\cC^0H^1} + P_\alpha \left( \sum_{l = 1}^{k-1} \|\nabla^l \xi\|_{\cC^0H^1} \right) \cdot \sum_{l = 0}^{k - 1} \| \nabla^l\zeta \|_{\cC^0H^1}\right).
\end{align}
We prove this by induction on \(n_t(\alpha) := \#\{m \in [1, k] \: | \: \alpha_m = t \} \).
\begin{enumlist}
\item[ \( \boldsymbol{n_t(\alpha) = 0.} \) ] If \(\alpha = (s, \ldots, s)\), then since the special connection we have constructed preserves the boundary conditions of \(\Gamma^{k+2}_{u_\delta}\), the desired inequality follows immediately from the \(k=0\) case of the current lemma: \(\| \nabla_s^k \zeta \|_{\cC^0H^1} \leq C_1\| \nabla_s^k \zeta \|_{H^2}\).

\item[ \( \boldsymbol{n_t(\alpha) \in [1, k]. } \) ] Let us prove the inductive step (i.e.\ that there is a polynomial $P_\alpha$ for which \eqref{eq:induct_bound} holds) for some \(n_t(\alpha) \in [1, k]\).
Write \(\alpha = (\alpha', \alpha_m = t, s, \ldots, s)\).
Using the assumed bound on \(\bj\), we estimate: \begin{align*}
\|\nabla_\alpha \zeta \|_{\cC^0H^1} &= \|\nabla_{\alpha'}(C^{-1}( \D_\xi(\nabla_s^{k - m}\zeta) - (A - \bJ(\xi))\nabla_s^{k-m+1}\zeta) ) \|_{\cC^0H^1} \\
&\leq C_1\Bigl( \|\nabla_{\alpha'}\D_\xi(\nabla_s^{k-m}\zeta)\|_{\cC^0H^1} + \|\nabla_{\alpha'}\nabla_s^{k-m+1}\zeta\|_{\cC^0H^1} + \|\nabla_{\alpha'}(\bJ(\xi)\nabla_s^{k-m+1}\zeta)\|_{\cC^0H^1} \\
&\hspace{0.5in} + \sum_{l = 0}^{m-2} \|\nabla^{k-m + l +1}\zeta\|_{\cC^0H^1} + \sum_{l=0}^{m-2} \|\nabla^l(\bJ(\xi)\nabla^{k-m+1}\zeta)\|_{\cC^0H^1}\Bigr).
\end{align*}
Let us bound separately the five terms in the last expression.
\begin{enumlist}
\item[ \(\boxed{ \|\nabla_{\alpha'} \D_\xi (\nabla_s^{k-m}\zeta) \|_{\cC^0H^1}.} \) ] We estimate: \begin{align*}
&\|\nabla_{\alpha'}\D_\xi(\nabla_s^{k-m}\zeta) \|_{\cC^0H^1} \\
&\hspace{0.4in}\leq \|\nabla_{\alpha'}\nabla_s^{k-m}\D_\xi\zeta\|_{\cC^0H^1} + \sum_{l = 0}^{k-m-1} \|\nabla_{\alpha'}\nabla_s^l(\partial_sA\nabla_s^{k-m-l}\zeta + \partial_sC\nabla_s^{k-m-l-1}\nabla_t\zeta)\|_{\cC^0H^1} \\
&\hspace{0.7in} + \sum_{l = 1}^{k-m} \|\nabla_{\alpha'}\nabla_s^l(\bJ(\xi))\nabla_s^{k-m-l+1}\zeta\|_{\cC^0H^1} + \sum_{l = 0}^{k-m-1} \|\nabla_{\alpha'}(C\nabla_s^l[\nabla_s, \nabla_t]\nabla_s^{k-m-l-1}\zeta)\|_{\cC^0H^1}.
\end{align*}
Let us bound each of the four terms on the right-hand side.
The first term on the right-hand side, \(\|\nabla_{\alpha'}\nabla_s^{k-m}\D_\xi\zeta\|_{\cC^0H^1}\), is bounded by \(\|\nabla^{k-1}\D_\xi\zeta\|_{\cC^0H^1}\).
Due to the assumed bound on \(\bj\), the term \(\sum_{l = 0}^{k-m-1} \|\nabla_{\alpha'}\nabla_s^l(\partial_sA\nabla_s^{k-m-l}\zeta + \partial_sC\nabla_s^{k-m-l-1}\nabla_t\zeta)\|_{\cC^0H^1}\) is bounded by a constant times \(\sum_{l=0}^{k-1} \|\nabla^l\zeta\|_{\cC^0H^1}\).
To bound the term \(\sum_{l = 1}^{k-m} \|\nabla_{\alpha'}\nabla_s^l(\bJ(\xi))\nabla_s^{k-m-l+1}\zeta\|_{\cC^0H^1}\), observe that the assumed bound on \(\bJ\) yields: \begin{align*}
\sum_{l = 1}^{k-m} \|\nabla_{\alpha'}\nabla_s^l(\bJ(\xi))\nabla_s^{k-m-l+1}\zeta\|_{\cC^0H^1} &\leq \sum_{\beta, \gamma \geq 0, \atop
\beta + \gamma = k-2} \|\nabla^{\beta+1}(\bJ(\xi))\nabla^{\gamma + 1}\zeta\|_{\cC^0H^1} \\
&\leq P\left(\sum_{l=1}^{k-1} \|\nabla^l\xi\|_{\cC^0H^1}\right) \cdot \sum_{l=1}^{k-1} \|\nabla^l\zeta\|_{\cC^0H^1}.
\end{align*}
(In the last inequality we have used the Banach algebra property of \(\cC^0H^1\).)  Finally, the curvature of \(\nabla\) is a tensor, so the term \(\sum_{l=0}^{k - m - 1} \|\nabla_{\alpha'}(C\nabla_s^l[\nabla_s, \nabla_t]\nabla_s^{k-m-l-1}\zeta)\|_{\cC^0H^1}\) can be bounded by a constant times \(\sum_{l = 0}^{k-2} \|\nabla^l\zeta\|_{\cC^0H^1}\).

\item[ \( \boxed{ \|\nabla_{\alpha'}\nabla_s^{k - m + 1} \zeta\|_{\cC^0H^1}.}\) ] By the inductive hypothesis, this term is bounded appropriately: \begin{align*}
\|\nabla_{\alpha'}\nabla_s^{k-m+1}\zeta\|_{\cC^0H^1} &\leq C_1\left( \|\zeta\|_{H^{k+2}} + \|\nabla^{k-1}\D_\xi\zeta\|_{\cC^0H^1} + P_{(\alpha',s,\ldots,s)}\left(\sum_{l = 1}^{k-1} \|\nabla^l \xi\|_{\cC^0H^1}\right) \times\right. \\
&\hspace{3.65in} \left. \times \sum_{l=0}^{k-1} \|\nabla^l\zeta\|_{\cC^0H^1}\right).
\end{align*}

\item[ \(\boxed{ \|\nabla_{\alpha'}(\bJ(\xi)\nabla_s^{k - m + 1} \zeta)\|_{\cC^0H^1}.}\) ] To bound this term, it suffices to bound \(\|\bJ(\xi)\nabla_{\alpha'}\nabla_s^{k-m+1}\zeta\|_{\cC^0H^1}\) \break and \(\|\nabla^{\beta+1}(\bJ(\xi))\nabla^{\gamma+1}\zeta\|_{\cC^0H^1}\) separately, where in the second term \(\beta\) and \(\gamma\) are nonnegative integers with \(\beta + \gamma = k-2\).
The quantity \(\|\bJ(\xi)\nabla_{\alpha'}\nabla_s^{k-m+1}\zeta\|_{\cC^0H^1}\) can be bounded using the Banach algebra property of \(\cC^0H^1\), the assumed \(\cC^1\)-bounds on \(\xi\), and the inductive hypothesis.
Using the Banach algebra property of \(\cC^0H^1\), the quantity \(\|\nabla^{\beta+1}(\bJ(\xi))\nabla^{\gamma+1}\zeta\|_{\cC^0H^1}\) can be bounded by \(P\left(\sum_{l = 1}^{k-1} \|\nabla^l\xi\|_{\cC^0H^1}\right) \cdot \sum_{l=1}^{k-1} \|\nabla^l\zeta\|_{\cC^0H^1}\).

\item[ \(\boxed{ \sum_{l = 0}^{m-2} \|\nabla^{k-m + l +1}\zeta\|_{\cC^0H^1}. }\)] This term is already bounded appropriately.

\item[ \(\boxed{ \sum_{l=0}^{m-2} \|\nabla^l(\bJ(\xi)\nabla^{k-m+1}\zeta)\|_{\cC^0H^1}. }\)] By the Banach algebra property of \(\cC^0H^1\), this term is bounded by \(P\left(\sum_{l=1}^{k-2} \|\nabla^l\xi\|_{\cC^0H^1}\right)\cdot\sum_{l=1}^{k-1} \|\nabla^l\zeta\|_{\cC^0H^1}\).
\end{enumlist}
\end{enumlist}

\noindent This establishes the inductive step, so we have proven \eqref{eq:sobolev} for all \(k \geq 0\).

\begin{step2}
We prove \eqref{eq:improvedsobolev} by induction on \(k\).
\end{step2}

\noindent As in Step 1, the \(k = 0\) case follows from \cite[Lemma 3.1.4]{isom}.
Next, say that \eqref{eq:improvedsobolev} holds up to, but not including, some \(k \geq 1\).
By \eqref{eq:sobolev}, we have: \begin{align*}
\|\nabla^k\zeta\|_{\cC^0H^1} &\leq C_1\left( \|\zeta\|_{H^{k+2}} + \|\nabla^{k-1} \D_\xi \zeta \|_{\cC^0H^1} + P\left( \sum_{l = 1}^{k-1} \|\nabla^l \xi\|_{\cC^0H^1} \right) \cdot \sum_{l = 0}^{k - 1} \| \nabla^l\zeta \|_{\cC^0H^1} \right) .
\end{align*}
Replacing the sum \(\sum_{l=0}^{k-1} \|\nabla^l\zeta\|_{\cC^0H^1}\) appearing in the last term using the inductive hypothesis finishes the inductive step.
\end{proof}

We now turn to the proof of Lemma~\ref{lem:masterestimate}.
Here is our strategy:
in Lemma~\ref{lem:k=0ellipticineq}, we bound \(\|\zeta\|_{H^1}\) in terms of \(\|\zeta\|_{H^0}\) and \(\|\D_\zeta\zeta\|_{H^0}\), for \(\zeta\) supported in \(\bQ_{\delta,r} \).
In Lemma~\ref{lem:ellipticineq}, we use Lemma~\ref{lem:k=0ellipticineq} to bound \(\|\eta\nabla^k\zeta\|_{H^1}\) in terms of \(\|\zeta\|_{\wt H^k}\) and \(\|\D_\zeta\zeta\|_{\wt H^k}\), where \(\eta\) is supported in \(Q_{02, \delta,r}\) and \(\zeta\) has arbitrary support.
Finally, we use Lemma~\ref{lem:ellipticineq} to prove Lemma~\ref{lem:masterestimate}.

\begin{lemma}[elliptic estimate for \(k = 0\) and \(\zeta\) compactly supported] \label{lem:k=0ellipticineq}
There is a constant \(\eps > 0\) and for every \(C_0 > 0\), \(k \geq 0\), and \(r_1, r_2\) with \(0 < r_1 < r_2 < \rho\) there is a constant \(C_1\) such that the inequality \begin{align} \label{eq:k=0ellipticineq}
\| \nabla\zeta\|_{H^0(\bQ_{\delta,r})} \leq C_1 \bigl( \|\D_\xi\zeta\|_{H^0(\bQ_{\delta, r})} + \|\zeta\|_{H^0(\bQ_{\delta,r})} \bigr)
\end{align}
holds for any choice of \(\delta \in (0, r_1/4]\), \(r \in [r_1, r_2]\), a coherent collection \(\bj\) of complex structures on \(\ol \bQ_{\delta,\rho}\) with \(\|\bj - \bi\|_{\cC^0} \leq \eps\) and \(\|\bj - \bi\|_{\cC^1} \leq C_0\), a coherent pair \(\bJ\) of almost complex structures on \(\ol\bQ_{\delta,\rho}\) which are contained in a \(\cC^1\)-ball of radius \(C_0\) and which induce by \eqref{eq:metrics} metrics whose pairwise constants of equivalence are bounded above by \(C_0\), and sections \(\zeta, \xi \in \Gamma_{u_\delta}^2(\bQ_{\delta,r})\) with \(\|\xi\|_{\cC^0} \leq \eps\), \(\|\xi\|_{\cC^1} \leq C_0\), and \(\supp \zeta_{02}\), \(\supp \wh\zeta\) compact subsets of \(Q_{02,\delta,r}\), \(\wh Q_{\delta,r}\).
\end{lemma}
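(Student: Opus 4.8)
The plan is to compare $\D_\xi$ with a model operator whose integration by parts produces no boundary terms, then absorb the difference. The compact-support hypothesis is exactly what makes this the base case: the supports of $\zeta_{02}$ and $\wh\zeta$ avoid the sides $s=\pm r$ of $\bQ_{\delta,r}$ and the top $t=r-2\delta$ of $Q_{02,\delta,r}$, so the only boundary that can contribute lies along $t=0$ (for both pieces) and $t=\delta$ (for $\wh\zeta$) --- precisely the seam and diagonal loci where the special connection $\nabla=\nabla_\delta$ of Lemma~\ref{lem:conn} and the coherence axioms do their work. So I would first introduce the model operator $\D^{\mathrm{model}}\zeta := (\nabla_{02,t}\zeta_{02}-J_{02}^u\nabla_{02,s}\zeta_{02},\ \wh\nabla_t\wh\zeta-\wh J^u\wh\nabla_s\wh\zeta)$, where $J_{02}^u,\wh J^u$ denote the components of $\bJ$ evaluated along $u_{02,\delta},\wh u_\delta$ (that is, along $e_{u_\delta}(0)$ rather than $e_{u_\delta}(\xi)$) and the coordinate coefficients have been replaced by those of the standard coherent collection $\bi$ of \eqref{eq:standardj}, so $A=0$, $C=\Id$. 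Then $\D_\xi\zeta-\D^{\mathrm{model}}\zeta = A\nabla_s\zeta + (C-\Id)\nabla_t\zeta - (\bJ(\xi)-\bJ^u)\nabla_s\zeta$ carries no zeroth-order term, since both operators use $\nabla$; and $\|\bj-\bi\|_{\cC^0}\le\eps$ bounds $|A|$ and $|C-\Id|$ pointwise up to a dimensional constant, while smoothness of the pulled-back complex structure $\bJ(\xi)$ in its section-argument together with $\bJ(0)=\bJ^u$ gives $|\bJ(\xi)-\bJ^u|\le c\,\|\xi\|_{\cC^0}\le c\eps$ with $c$ depending only on $C_0$. Hence $\|(\D_\xi-\D^{\mathrm{model}})\zeta\|_{H^0(\bQ_{\delta,r})}\le c_1\eps\,\|\nabla\zeta\|_{H^0(\bQ_{\delta,r})}$, and it suffices to prove \eqref{eq:k=0ellipticineq} with $\D^{\mathrm{model}}$ in place of $\D_\xi$.

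Next comes the integration by parts for $\D^{\mathrm{model}}$. Expanding $|\nabla_t\zeta-J^u\nabla_s\zeta|^2 = |\nabla_t\zeta|^2+|\nabla_s\zeta|^2 - 2\langle\nabla_t\zeta, J^u\nabla_s\zeta\rangle$ in the metrics \eqref{eq:metrics} (compatibility of $J^u$ gives $|J^u\nabla_s\zeta|=|\nabla_s\zeta|$) and rewriting $2\langle\nabla_t\zeta, J^u\nabla_s\zeta\rangle = \partial_t\langle\zeta,J^u\nabla_s\zeta\rangle - \partial_s\langle\zeta,J^u\nabla_t\zeta\rangle + (\text{terms in }\nabla J^u\text{ and the curvature of }\nabla)$ yields $\|\D^{\mathrm{model}}\zeta\|_{H^0(\bQ_{\delta,r})}^2 = \|\nabla\zeta\|_{H^0(\bQ_{\delta,r})}^2 + \mathcal R + \mathcal B$, where $\mathcal R$ collects bulk terms carrying at most one derivative of $\zeta$ with coefficients bounded in terms of $C_0$ (the $\cC^1$-bounds on $\bJ$ and on the connection form and curvature of $\nabla$), so $|\mathcal R|\le c_2(\|\zeta\|_{H^0}\|\nabla\zeta\|_{H^0}+\|\zeta\|_{H^0}^2)$, and $\mathcal B$ is the boundary contribution. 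The crux is $\mathcal B=0$. Along $t=\delta$: $\wh\zeta(s,\delta)\in\rT(L_{01}\times L_{12})^T$ by \eqref{eq:linearizedBCs}, and since $\wh\nabla$ preserves $\wh u_\delta^*\rT(L_{01}\times L_{12})^T$ (Lemma~\ref{lem:conn}) so does $\wh\nabla_s\wh\zeta(s,\delta)$; as $(L_{01}\times L_{12})^T$ is $\om_{0211}$-Lagrangian and $\wh J^u$ is $\om_{0211}$-compatible, $\wh J^u$ sends $\rT(L_{01}\times L_{12})^T$ into its orthogonal complement, so the $t=\delta$ integral vanishes (one also uses that $\wh C(s,\delta)$ is a scalar multiple of the identity --- the remark after \eqref{eq:endos} --- which is automatic for the model coefficients). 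Along $t=0$: by \eqref{eq:linearizedBCs} the $(0,2)$-component of $\wh\zeta(s,0)$ is identified with $\zeta_{02}(s,0)$ and the two $M_1$-components of $\wh\zeta(s,0)$ agree; Lemma~\ref{lem:conn} supplies both $\nabla_{02,s}\zeta_{02}(s,0)=p\circ\wh\nabla_s\wh\zeta(s,0)$ and preservation of $\wh u_\delta^*\rT(M_{02}\times\Delta_{M_1})$; the coherence conditions on $\bJ$ and $\bj$ (Definition~\ref{def:coherent}, \eqref{eq:coherentCs0}) encode that the $(0,2)$-parts of $w_{02}$ and $\wh w$ glue across $t=0$ to a single pseudoholomorphic map, which makes the $(0,2)$-contributions of the two pieces cancel; and the $\Delta_{M_1}$-directions contribute nothing because $\Delta_{M_1}$ is Lagrangian and $\wh J^u$ compatible. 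Hence $\mathcal B=0$.

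I would then close by absorption: from the previous step, $\|\nabla\zeta\|_{H^0}^2\le\|\D^{\mathrm{model}}\zeta\|_{H^0}^2 + c_2(\|\zeta\|_{H^0}\|\nabla\zeta\|_{H^0}+\|\zeta\|_{H^0}^2)$; Young's inequality absorbs a quarter of $\|\nabla\zeta\|_{H^0}^2$, and then substituting $\|\D^{\mathrm{model}}\zeta\|_{H^0}\le\|\D_\xi\zeta\|_{H^0}+c_1\eps\|\nabla\zeta\|_{H^0}$ and choosing $\eps$ small enough that the remaining $\|\nabla\zeta\|_{H^0}^2$-coefficient on the right is below $1$ yields \eqref{eq:k=0ellipticineq}. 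The parameter $k$ in the statement is inert and is carried only for uniformity with Lemmas~\ref{lem:improvedsobolev} and \ref{lem:masterestimate}.

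I expect the main obstacle to be the vanishing of $\mathcal B$ --- specifically the cancellation of the $t=0$ boundary integrals of the $w_{02}$- and $\wh w$-pieces --- which is exactly what the connection of Lemma~\ref{lem:conn} and the coherence axioms were built for, and where the departure from \cite{w:ss,isom} lies: all of this must be carried out for a domain complex structure $\bj$ merely $\eps$-close to, not equal to, the standard $\bi$, which is what forces the smallness of $\eps$ and is absorbed into the perturbation bookkeeping above. With the ordinary Levi--Civita connection in place of $\nabla$, the $t=\delta$ boundary terms would instead be of size $O(1/\delta)$ --- the obstruction \cite{isom} had to work around, and the one the present construction removes.
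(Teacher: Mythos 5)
Your proposal is correct and follows essentially the same route as the paper: expand a squared $L^2$ norm, integrate by parts, and use the special connection of Lemma~\ref{lem:conn} together with the coherence of $\bj$ and the Lagrangian boundary conditions to make the boundary terms at $t=0$ and $t=\delta$ vanish pointwise, then absorb via smallness of $\eps$. The one structural difference is that you first perturb $\D_\xi$ to a model operator with $\bj=\bi$ and $\bJ$ evaluated at $\xi=0$ and bound the difference by $c\eps\|\nabla\zeta\|_{H^0}$, whereas the paper works directly with $\D_\xi$ by switching to the pullback metric $g_\xi := e_{u_\delta}(\xi)^*g$ (which makes $\bJ(\xi)$ exactly anti-selfadjoint) and handles the nonstandard coefficients $A$, $C$ by a direct AM--GM step on the quadratic form; both routes are valid, and yours trades the change of metric for an extra perturbation term. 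One small correction: the $t=0$ cancellation you describe does not actually need the coherence condition on $\bJ$ (Definition~\ref{def:coherent}) --- with compatible $\bJ^u$ one has $g(a,J^u b)=-\omega(a,b)$ for each factor, and the cancellation then follows purely from the boundary conditions $\zeta_{02}=p\circ\wh\zeta$, the connection identity $\nabla_{02,s}\zeta_{02}=p\circ\wh\nabla_s\wh\zeta$, the Lagrangianity of $\Delta_{M_1}$, and the sign identity $\omega_{0211}|_{\rT M_{02}\times\rT\Delta_{M_1}}=-p^*\omega_{02}$; what the coherence of $\bj$ supplies, as you note parenthetically, is scalarity of $\wh C(s,\delta)$ and of the $t=0$ boundary coefficients so that they factor out of the boundary integrand in the general (non-model) case.
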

  
\begin{proof}
Throughout this proof, \(C_1\) will denote a \(\delta\)-independent constant that may change from line to line, and \(A = (A_{02}, \wh A)\), \(C = (C_{02}, \wh C)\) will be the endomorphisms of \(u_{02,\delta}^*\rT M_{02}\) and \(\wh u_\delta^*\rT M_{0211}\) defined in \eqref{eq:endos}.

We begin by fixing convenient metrics on \(M_{02}\) and \(M_{0211}\) that will be used for the pointwise norms in the definition of the Sobolev norms.
Via \eqref{eq:metrics}, \(\bJ\) induces fiberwise metrics \(g_{02}, \wh g\) on \(u_{02,\delta}^*\rT M_{02}\) and \(\wh u_\delta^*\rT M_{0211}\).
In this proof, however, we will use the pullback metrics \(g_\xi = (g_{02,\xi}, \wh g_\xi)\) of \(g_{02}, \wh g\) under \(\rd e_{u_{02,\delta}}(\xi_{02})\), \(\rd e_{\wh u_\delta}(\wh\xi)\); note that \(g_\xi\) is \(\bJ(\xi)\)-invariant.
If we pick \(\eps > 0\) to be sufficiently small, then \(\d e_{u_\delta}(\xi)\) is \(\cC^0\)-close to the identity, and hence the induced norm
\(\| - \|_{\xi, H^k} \coloneqq \bigl(\int_{\bQ_{\delta,r}} |-|_\xi^2 \,\d s\d t\bigr)^{1/2}\)
on \(\Gamma^k_{u_\delta}(\bQ_{\delta,r })\) is equivalent to the standard norms \(\| - \|_{H^k} = \| - \|_{0, H^k}\).
(Here we have denoted $|-|_\xi \coloneqq g_\xi(-,-)^{1/2}$.)

With these metrics we calculate for \(\zeta\in \Gamma_{u_\delta}^2\) compactly supported and \(\xi \in \Gamma_{u_\delta}^2\) satisfying \(\|\xi\|_{\cC^0(\bQ_{\delta,r})} \leq \eps\) and \(\|\nabla\xi\|_{\cC^0(\bQ_{\delta,r})} \leq C_0\):
\begin{align} \label{eq:k=0ellipticineq1}
\| \D_\xi\zeta \|_{\xi, H^0 }^2 &=  
 \tint_{\bQ_{\delta,r}} \bigl( (|\nabla_s\zeta|_\xi^2 + |A\nabla_s\zeta|_\xi^2) + 2g_\xi(A \nabla_s \zeta, C\nabla_t \zeta ) + | C \nabla_t \zeta |_{\xi}^2 \bigr) \, \d s \d t \nonumber \\
 &\hspace{2in} + \tint_{\bQ_{\delta,r}} \bigl( g_\xi( C\nabla_s\zeta, \bJ(\xi)\nabla_t\zeta ) - g_\xi(C\nabla_t\zeta, \bJ(\xi)\nabla_s\zeta ) \bigr) \, \d s \d t .
\end{align}
Let us estimate the two integrals on the right-hand side separately.
We begin with the first integral: \begin{align} \label{eq:k=0ellipticineq2}
&\tint_{\bQ_{\delta,r}}\bigl( (|\nabla_s\zeta|_\xi^2 + |A\nabla_s\zeta|_\xi^2) + 2g_\xi(2A \nabla_s \zeta, \tfrac 1 2C\nabla_t \zeta ) + | C \nabla_t \zeta |_{\xi}^2 \bigr) \, \d s \d t \\
&\hspace{1.5in}\sr{\text{AM-GM}}{\geq} \tint_{\bQ_{\delta,r}}\bigl( (|\nabla_s\zeta|_\xi^2 - 3|A\nabla_s\zeta|_\xi^2) + \tfrac 3 4| C \nabla_t \zeta |_{\xi}^2 \bigr) \, \d s \d t \geq \tfrac 5 8\|\nabla\zeta\|_{\xi,H^0}^2, \nonumber
\end{align}
where the last inequality follows from the hypothesis \(\|\bj - \bi\| \leq \eps\) as long as \(\eps\) is chosen small enough.

To bound the second integral on the right-hand side of \eqref{eq:k=0ellipticineq1}, we first derive a convenient formula for its integrand:
\begin{align}
 &g_\xi( C\nabla_s\zeta, \bJ(\xi)\nabla_t\zeta ) - g_\xi(C\nabla_t\zeta, \bJ(\xi)\nabla_s\zeta )\label{eq:k=0ellipticineq3} \\
 &\hspace{0.5in} = \bigl( \partial_s( g_\xi(C \zeta, \bJ(\xi)\nabla_t\zeta) ) - (\nabla_s g_\xi)(C\zeta,\bJ(\xi)\nabla_t\zeta) - g_\xi((\nabla_sC)\zeta,\bJ(\xi)\nabla_t\zeta) \nonumber \\
 &\hspace{3in} - g_\xi(C\zeta, \nabla_s(\bJ(\xi))\nabla_t\zeta) - g_\xi(C\zeta,\bJ(\xi)\nabla_s\nabla_t\xi) \bigr) \nonumber \\
 &\hspace{1.5in} - \bigl( \partial_t( g_\xi(C\zeta, \bJ(\xi)\nabla_s\zeta) ) - \nabla_t(g_\xi)(C\zeta,\bJ(\xi)\nabla_s\zeta) - g_\xi((\nabla_tC)\zeta, \bJ(\xi)\nabla_s\zeta) \nonumber \\
&\hspace{1.85in} - g_\xi(C\zeta, \nabla_t(\bJ(\xi))\nabla_s\zeta) + g_\xi(C\zeta,\bJ(\xi)[\nabla_s,\nabla_t]\zeta) - g_\xi(C\zeta,\bJ(\xi)\nabla_s\nabla_t\xi) \bigr) \nonumber \\
 &\hspace{0.1in}= \bigl( \partial_s( g_\xi(C\zeta,\bJ(\xi)\nabla_t\zeta)) - \partial_t( g_\xi(C\zeta,\bJ(\xi)\nabla_s\zeta)) \bigr) - \bigl( (\nabla_sg_\xi)(C\zeta, \bJ(\xi)\nabla_t\zeta) - (\nabla_tg_\xi)(C\zeta, \bJ(\xi)\nabla_s\zeta) \bigr) \nonumber \\
 & \hspace{0.65in} - \bigl(g_\xi((\nabla_sC)\zeta,\bJ(\xi)\nabla_t\zeta) - g_\xi((\nabla_tC)\zeta,\bJ(\xi)\nabla_s\zeta)\bigr) - g_\xi(C\zeta, \nabla_s(\bJ(\xi))\nabla_t\zeta - \nabla_t(\bJ(\xi))\nabla_s\zeta) \nonumber \\
 &\hspace{4.8in} - g_\xi(C\zeta,\bJ(\xi)[\nabla_s,\nabla_t]\zeta). \nonumber
\end{align}
We can now use Green's formula and the assumed \(\cC^1\)-bounds on \(\bj\), \(\bJ\), and \(\xi\) to bound the second integral on the right-hand side of \eqref{eq:k=0ellipticineq2}: \begin{align}
&\tint_{\bQ_{\delta,r}} \bigl( g_\xi( C\nabla_s\zeta, \bJ(\xi)\nabla_t\zeta ) - g_\xi(C\nabla_t\zeta, \bJ(\xi)\nabla_s\zeta ) \bigr) \, \d s \d t \nonumber \\
&\hspace{0.3in}\sr{\mathclap{\eqref{eq:k=0ellipticineq3}}}{=} \tint_{(-r,r) \times \{0\}} g_\xi(C\zeta,\bJ(\xi)\nabla_s\zeta) \, \d s \d t - \tint_{(-r,r) \times \{\delta\}} \wh g_\xi(\wh C\wh\zeta, \wh \bJ(\wh\xi)\wh\nabla_s\wh\zeta) \, \d s \d t \label{eq:k=0ellipticineq4} \\
&\hspace{0.5in} - \tint_{\bQ_{\delta,r}} \bigl( (\nabla_sg_\xi)(C\zeta, \bJ(\xi)\nabla_t\zeta) - (\nabla_tg_\xi)(C\zeta, \bJ(\xi)\nabla_s\zeta) \bigr) \, \d s \d t \nonumber \\
&\hspace{0.5in} - \tint_{\bQ_{\delta,r}} \bigl(g_\xi((\nabla_sC)\zeta,\bJ(\xi)\nabla_t\zeta) - g_\xi((\nabla_tC)\zeta,\bJ(\xi)\nabla_s\zeta)\bigr) \, \d s \d t \nonumber \\
&\hspace{0.5in} - \tint_{\bQ_{\delta,r}} g_\xi(C\zeta, \nabla_s\bJ(\xi))\nabla_t\zeta - \nabla_t(\bJ(\xi))\nabla_s\zeta) \, \d s \d t - \tint_{\bQ_{\delta,r}} g_\xi(C\zeta,\bJ(\xi)[\nabla_s,\nabla_t]\zeta) \, \d s \d t \nonumber \\
&\hspace{0.3in}\geq -\tint_{\bQ_{\delta,r}} C_1|\zeta|_\xi ( |\zeta|_\xi + |\nabla\zeta|_\xi ) \, \d s \d t \nonumber \sr{\text{AM-GM}}{\geq} -\tfrac 1 2\|\nabla \zeta\|_{\xi,H^0}^2 - C_1\|\zeta\|_{\xi, H^0}^2, \nonumber
\end{align}
where in the first inequality we have eliminated the integrals over the \(t = 0\) and \(t = \delta\) boundary via the coherence condition on \(\bj\) and the fact that \(g_\xi(\zeta,\bJ(\xi)\nabla_s\zeta)|_{t=0}\) and \(\wh g_\xi(\wh\zeta, \wh J(\wh\xi)\wh\nabla_s\wh\zeta)|_{t=\delta}\) vanish.
Indeed, \(\wh g_\xi(\wh\zeta, \wh J(\wh\xi)\wh\nabla_s\wh\zeta)|_{t=\delta}\) vanishes by the Lagrangian boundary condition:
\begin{align*} 
\lan \wh\zeta , \wh J(\wh\xi)\wh\nabla_s\wh\zeta \ran_{\wh\xi} |_{t=\delta}
&= 
 \om_{0211}(\rd e_{\wh u_\delta}(\wh\xi)\wh\zeta, \wh J(e_{\wh u_\delta}(\wh\xi))^2 \rd e_{\wh u_\delta}(\wh\xi) \wh\nabla_s\wh\zeta ) |_{t=\delta} \\
 &= - \om_{0211}(\rd e_{\wh u_\delta}(\wh\xi)\wh\zeta, \d e_{\wh u_\delta}(\wh\xi) \wh\nabla_s\wh\zeta ) |_{t=\delta} \;=\; 0 ,
\end{align*}
where we crucially used the fact that both the exponential map \(\rd e_{\wh u_\delta}(\wh\xi)\) and the connection \(\wh\nabla\) preserve \(\rT(L_{01}\times L_{12})^T\).
The boundary term \(g_\xi(\zeta,\bJ(\xi)\nabla_s\zeta)|_{t=0}\) vanishes due to the facts that \(\rd e_{u_\delta}(\xi)\) preserves \(\rT\Delta_{M_{02}} \times \rT\Delta_{M_1}\), \(\nabla\) satisfies \(\nabla_{02,s}\zeta_{02}|_{t=0} = p \circ \wh\nabla_s\wh\zeta|_{t=0}\) for \(p\colon M_{0211} \to M_{02}\) the projection, and \(\omega_{02},\omega_{0211}\) satisfy \(\omega_{0211}|_{\rT M_{02} \times \rT \Delta_{M_1}} = -p^*\omega_{02}\):
\begin{align*}
 \lan \zeta , \bJ(\xi)\nabla_s\zeta \ran_{\zeta} |_{t=0}
 &= -\omega_{02}( \rd e_{u_{02,\delta}}(\xi_{02})\zeta_{02}, \rd e_{u_{02,\delta}}(\xi_{02})\nabla_{02,s}\zeta_{02} )|_{t=0} - \omega_{0211}( \rd e_{\wh u_\delta}(\wh \xi)\wh\zeta, \rd e_{\wh u_\delta}(\wh\xi)\wh\nabla_s\wh\xi)|_{t=0} \\
 &= -\omega_{02}( \rd e_{u_{02,\delta}}(\xi_{02})(p \circ \wh \zeta), \rd e_{u_{02,\delta}}(\xi_{02})(p \circ \wh\nabla_s\wh\zeta))|_{t=0} \\
 &\hspace{2.5in} + p^*\omega_{02}( \rd e_{\wh u_\delta}(\wh\xi)\wh\zeta, \rd e_{\wh u_\delta}(\wh\xi)\wh\nabla_s\wh\zeta)|_{t=0} = 0.
\end{align*}

Combining \eqref{eq:k=0ellipticineq1}, \eqref{eq:k=0ellipticineq2}, and \eqref{eq:k=0ellipticineq4} yields the following inequality: \begin{align*}
\| \D_\xi\zeta \|_{\xi,H^0}^2 \geq \tfrac 1 8\|\nabla\zeta\|_{\xi,H^0}^2 - C_1\|\zeta\|_{\xi,H^0}^2.
\end{align*}
Adding \(C_1\|\zeta\|^2_{\xi, H^0 }\) to both sides of this inequality and taking the square root of the result, we obtain: \begin{align*}
\|\nabla\zeta\|_{\xi, H^0 } \leq C_1(\|\D_\xi\zeta\|^2_{\xi, H^0 } + \|\zeta\|^2_{\xi, H^0 })^{1/2} \leq C_1(\|\D_\xi\zeta\|_{\xi, H^0 } + \|\zeta\|_{\xi, H^0 }).
\end{align*}
In this estimate, we may replace \(\|-\|_{\xi,H^0}\) with \(\|-\|_{H^0}\) by using the \(\delta\)-independent uniform equivalence of these norms, which yields \eqref{eq:k=0ellipticineq}.
\end{proof}

\begin{lemma}[elliptic estimate for \(k \geq 0\)] \label{lem:ellipticineq}
There is a constant \(\eps > 0\) and for every \(C_0 > 0\), \(k \geq 0\), and \(0 < r_1 < r_2 < \rho\) there is a constant \(C_1\) such that the inequality \begin{align} \label{eq:ellipticineq}
\| \eta\nabla^k \zeta\|_{H^1(\bQ_{\delta,r})} \leq C_1 \bigl( \|\D_\zeta\zeta\|_{\wt H^k(\bQ_{\delta,r})} + \|\zeta\|_{\wt H^k(\bQ_{\delta,r})} \bigr)
\end{align}
holds for any choice of \(\delta \in (0, r_1/4]\), \(r \in [r_1, r_2]\), a coherent collection \(\bj\) of complex structures on \(\ol \bQ_{\delta,\rho}\) with \(\|\bj - \bi\|_{\cC^0} \leq \eps\) and \(\|\bj - \bi\|_{\cC^{\max\{k,1\}}} \leq C_0\), a pair \(\bJ\) of compatible almost complex structures on \(\ol \bQ_{\delta,\rho}\) which are contained in a \(\cC^{\max\{k,1\}}\)-ball of radius \(C_0\) and which induce by \eqref{eq:metrics} metrics whose pairwise constants of equivalence are bounded above by \(C_0\), a pair of sections \(\zeta \in \Gamma_{u_\delta}^{k+2}(\bQ_{\delta,r})\) with \(\|\zeta\|_{\cC^0} \leq \eps\), \(\|\zeta\|_{\cC^1} \leq C_0\), and \(\|\zeta\|_{\wt H^k(\bQ_{\delta,r})} \leq C_0\), and a smooth function \(\eta\colon \ol Q_{02,\delta,r} \to \R\) with \(\|\eta\|_{\cC^{k+1}} \leq c_0\) and \(\supp \eta \subset Q_{02,\delta,r}\).
\end{lemma}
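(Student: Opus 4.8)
The plan is to reduce Lemma~\ref{lem:ellipticineq} to the compactly-supported base estimate Lemma~\ref{lem:k=0ellipticineq} via a double induction: an outer induction on $k$, and for each $k$ an inner induction on the number of $t$-derivatives $n_t(\alpha) := \#\{m \mid \alpha_m = t\}$ occurring in a covariant derivative $\nabla_\alpha\zeta$, $\alpha \in \{s,t\}^k$ --- mirroring Step~1 of the proof of Lemma~\ref{lem:improvedsobolev}, but with Lemma~\ref{lem:k=0ellipticineq} (rather than the uniform Sobolev inequality \cite[Lemma~3.1.4]{isom}) as the engine. For the base case $k=0$ I would extend $\eta$ to an auxiliary cutoff $\wh\eta$ on $\ol{\wh Q}_{\delta,r}$ supported away from $s=\pm r$ with $\wh\eta|_{t=0} = \eta|_{t=0}$, observe that $(\eta\,\zeta_{02}, \wh\eta\,\wh\zeta)$ then lies in $\Gamma^2_{u_\delta}(\bQ_{\delta,r})$ with support compact in $Q_{02,\delta,r}$ resp.\ $\wh Q_{\delta,r}$, apply Lemma~\ref{lem:k=0ellipticineq} to it with its ``$\xi$''-slot equal to $\zeta$, and use the Leibniz identity $\D_\zeta(\eta\,\zeta) = \eta\,\D_\zeta\zeta + (A\partial_s\eta + C\partial_t\eta - \bJ(\zeta)\partial_s\eta)\,\zeta$ (together with the $\cC^1$-bounds on $A,C,\bJ(\zeta),\eta,\wh\eta$) to convert the output into $\|\eta\,\zeta\|_{H^1} \leq C_1(\|\D_\zeta\zeta\|_{H^0} + \|\zeta\|_{H^0})$.

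For the inductive step I must control $\|\eta\,\nabla_\alpha\zeta\|_{H^1}$ for every $\alpha \in \{s,t\}^k$, reducing to the pure-$s$ case $\alpha=(s,\dots,s)$. The decisive point there is that the special connection of Lemma~\ref{lem:conn} preserves the linearized boundary conditions \eqref{eq:linearizedBCs} in the $s$-direction: $\wh\nabla_s$ preserves $\wh u_\delta^*\rT(L_{01}\times L_{12})^T$ along $\{t=\delta\}$ and $\wh u_\delta^*\rT(M_{02}\times\Delta_{M_1})$ along $\{t=0\}$, and iterating the second bullet of Lemma~\ref{lem:conn} gives $\nabla_{02,s}^k\zeta_{02}|_{t=0} = p\circ\wh\nabla_s^k\wh\zeta|_{t=0}$, so $(\eta\,\nabla_s^k\zeta_{02}, \wh\eta\,\wh\nabla_s^k\wh\zeta)$ is again (the $M_{02}$-part of) an element of $\Gamma^2_{u_\delta}(\bQ_{\delta,r})$ with compact support, and Lemma~\ref{lem:k=0ellipticineq} applies to it; commuting $\D_\zeta$ past $\eta$ and past $\nabla_s^k$ leaves only $s$-derivatives of the coefficients $A,C$ of $\bj$, $\nabla$-derivatives of the fiberwise structure $\bJ(\zeta)$ (polynomial in $\bJ$ and in $\nabla$-derivatives of $\zeta$ of order $\leq k$), and curvature terms $[\nabla_s,\nabla_t]$ --- each paired with a $\nabla$-derivative of $\zeta$ of order $\leq k$. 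When $n_t(\alpha)\geq1$ I use that $\|\bj-\bi\|_{\cC^0}\leq\eps$ makes $C$ invertible and rewrite the last $t$-derivative via $C\nabla_t\zeta = \D_\zeta\zeta - (A-\bJ(\zeta))\nabla_s\zeta$; expanding the remaining $\nabla_{\alpha'}$ by Leibniz expresses $\eta\,\nabla_\alpha\zeta$ as a combination of terms each with strictly smaller $n_t$, still supported inside $Q_{02,\delta,r}$, so the inner inductive hypothesis and the $n_t=0$ case close the step --- exactly the manipulation in Step~1 of the proof of Lemma~\ref{lem:improvedsobolev}.

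The remaining work, and the main obstacle, is to bound all the commutator/coefficient/curvature ``junk'' by $\|\D_\zeta\zeta\|_{\wt H^k(\bQ_{\delta,r})}$ and $\|\zeta\|_{\wt H^k(\bQ_{\delta,r})}$ \emph{with $\delta$-independent constants}. The naive $L^2$-in-$t$ estimate of a product of intermediate $\nabla$-derivatives of $\zeta$ (or of $\bJ(\zeta)$) supported in the shrinking strip $\wh Q_{\delta,r} = (-r,r)\times[0,\delta]$ carries a Sobolev constant that degenerates as $\delta\to0$; this is precisely why $\|\cdot\|_{\wt H^k}$ is reinforced by the $\cC^0 H^1$-terms $\sup_t\|\nabla^l\zeta(-,t)\|_{H^1((-r,r))}$ for $l\leq k-2$. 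On the one-dimensional $s$-slices, whose length $2r\geq2r_1$ is bounded below, $H^1$ is a Banach algebra, so $\cC^0 H^1$ is closed under products, and estimating the bad factors there --- with a $\sup$ over $t$ rather than an integral --- is $\delta$-blind (indeed gains a factor $\delta^{1/2}$ when one finally integrates over $t$); the $\cC^{\max\{k,1\}}$-bounds on $\bj$ and $\bJ$ control the coefficient derivatives, and the curvature of $\nabla$ is a bounded tensor uniformly in $\delta$ by the restriction-compatibility property in Lemma~\ref{lem:conn}. A secondary subtlety, already exploited above, is that one must pass to pure $s$-derivatives \emph{before} invoking Lemma~\ref{lem:k=0ellipticineq}, since $\nabla_t$ does not preserve \eqref{eq:linearizedBCs} and the boundary integrals at $t=0$ and $t=\delta$ in that lemma's proof --- which vanish by the coherence condition on $\bj$ and the Lagrangian boundary condition --- would otherwise survive.
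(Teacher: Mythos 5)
Your overall strategy is sound and matches the paper's skeleton: an inner induction on $n_t(\alpha)$ with the pure-$s$ case as base, the special connection of Lemma~\ref{lem:conn} preserving \eqref{eq:linearizedBCs} along $\{t=0\}$ and $\{t=\delta\}$ so Lemma~\ref{lem:k=0ellipticineq} can be applied to $\eta\nabla_s^k\zeta$, the coherence of $\bj$ making $C$ invertible so that $\D_\zeta$ trades a $t$-derivative for an $s$-derivative, and the $\cC^0H^1$ reinforcement of $\wt H^k$ supplying $\delta$-independent Banach-algebra bounds on one-dimensional $s$-slices. The outer induction on $k$ you propose is a harmless reorganization; the paper works with fixed $k$.

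The genuine gap is that you do not identify, and would not be able to close with the tools you invoke, the critical case $k=3$. Collecting the junk terms in Step~1a/1b leaves a sum $\sum_{\beta\geq 1,\gamma\geq 0,\,\beta+\gamma=k}\|\eta\nabla^\beta(\bJ(\zeta))\nabla^\gamma\nabla_s\zeta\|_{H^0}$ on the right. For $k\neq 3$ every split has $\min\{\beta,\gamma+1\}\leq\max\{k-2,1\}$, so one factor can be put in $\cC^0$ using $\|\zeta\|_{\wt H^k}\leq C_0$ (recall $\|\cdot\|_{\wt H^k}$ only controls $\cC^0H^1$ of $\nabla^l\zeta$ up to $l\leq k-2$) and the other in $L^2$, and the argument is clean. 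But for $k=3$ the term $\|\eta\nabla^2(\bJ(\zeta))\nabla\nabla_s\zeta\|_{H^0}$ has both factors involving two derivatives, neither of which is covered by the $l\leq k-2=1$ range of the $\cC^0H^1$ terms in $\wt H^3$. Your ``$\delta^{1/2}$ gain upon integrating in $t$'' is exactly half of what is needed, but it multiplies a $\cC^0H^1$ quantity, namely $\|\eta\nabla^2\zeta\|_{\cC^0H^1}$, which by Lemma~\ref{lem:improvedsobolev} is only bounded in terms of $\|\eta\nabla^3\zeta\|_{H^1}$ --- the very quantity being estimated. Closing this circle requires an absorption argument: for $\delta\leq\delta_0$ the coefficient $\delta^{1/2}$ is small enough to absorb the term on the left, and for $\delta\in[\delta_0,r_1/4]$ one abandons the $\cC^0H^1$ route entirely and instead uses the $H^1\hookrightarrow L^4$ Sobolev embedding on $\bQ_{\delta,r}$, legitimate because for $\delta$ bounded below the domains satisfy a uniform cone condition. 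Without this two-case argument the estimate does not close for $k=3$, and every higher $k$ in your outer induction would inherit the gap.
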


\begin{proof}
Throughout this proof, \(C_1\) will denote a \(\delta\)-independent constant and \(P\) will denote a \(\delta\)-independent polynomial, and both may change from line to line.

We break down the proof into several steps: in Step 1, we establish \eqref{eq:ellipticineq}, but with an extra term on the right-hand side.
In Step 2, we bound this extra term, using different arguments in the \(k \neq 3\) and \(k = 3\) cases.
In Step 3, we establish \eqref{eq:ellipticineq}.

\begin{step1a}
We prove the following inequality: \begin{align} \label{eq:ellipticineqstep1a}
\| \eta \nabla_\alpha \zeta \|_{H^1} \leq C_1 \Bigl( \|\D_\zeta\zeta\|_{H^k} + \|\zeta\|_{H^k} + \sum_{\beta \geq 1,\gamma \geq 0, \atop
\beta+\gamma=k} \| \eta \nabla^\beta(\bJ(\zeta)) \nabla^\gamma\nabla_s\zeta \|_{H^0} \Bigr)
\end{align}
for \(\alpha = \underbrace{(s,\ldots,s)}_k\).
\end{step1a}

\noindent Since the connection \(\nabla\) preserves the linearized boundary conditions and \(\eta\) is supported in \(Q_{02,\delta,r}\), we may estimate \(\| \eta\nabla_s^k\zeta\|_{H^1}\) using Lemma~\ref{lem:k=0ellipticineq}: \begin{align*}
\|\eta\nabla_s^k\zeta\|_{H^1 } &\leq C_1( \|\D_\zeta(\eta\nabla_s^k\zeta) \|_{H^0} + \|\eta\nabla_s^k\zeta\|_{H^0} ) \nonumber \\
&= C_1 \Bigl( \|\eta\nabla_s^k\zeta\|_{H^0} + \Bigl\| \eta \nabla_s^k\D_\zeta\zeta - \sum_{l=1}^k {k \choose l} \eta(\partial_s^l A \nabla_s^{k-l+1}\zeta + \partial_s^l C \nabla_s^{k-l}\nabla_t\zeta ) \\
&\hspace{1.5in} + \sum_{l=1}^k {k\choose l}\eta\nabla_s^l(\bJ(\zeta)) \nabla_s^{k-l+1}\zeta - \sum_{l=1}^k C\eta\nabla_s^{l-1} [\nabla_s,\nabla_t] \nabla_s^{k-l}\zeta \nonumber \\
&\hspace{3.35in} - (\partial_s\eta(A - \bJ(\zeta)) + C\partial_t\eta)\nabla_s^k\zeta\Bigr\|_{H^0} \Bigr) \nonumber \\
&\leq C_1\Bigl( \|\D_\zeta\zeta\|_{H^k} + \|\zeta\|_{H^k} + \sum_{\beta\geq1,\gamma \geq 0, \atop
\beta + \gamma = k} \bigl\| \eta \nabla^\beta(\bJ(\zeta))\nabla^\gamma\nabla_s\zeta \bigr\|_{H^0} \Bigr). \nonumber
\end{align*}

\begin{step1b}
We prove \eqref{eq:ellipticineqstep1a} for a general multiindex \(\alpha\) of length \(k\).
\end{step1b}

\noindent We establish Step 1b by induction on \(n_t(\alpha) := \{ \#m \in [1,k] \: | \: \alpha_m = t\}\).
Step 1a is the base case for this induction.
For the inductive step, fix \(\alpha\) with \(n_t(\alpha) \geq 1\), and write \(\alpha = (\alpha', \alpha_m = t, \underbrace{s,\ldots,s}_{k-m})\).
We estimate: \begin{align*}
\|\eta \nabla_\alpha \zeta\|_{H^1 } &= \|\eta\nabla_{\alpha'}(C^{-1}( \D_\zeta(\nabla_s^{k-m}\zeta) - (A - \bJ(\zeta))\nabla_s^{k-m+1}\zeta)) \|_{H^1 } \\
&\hspace{-0.5in} \leq C_1( \|\zeta\|_{H^k } + \| \eta \nabla_{\alpha'}\D_\zeta(\nabla_s^{k-m}\zeta)\|_{H^1 } + \|\eta\nabla_{\alpha'}\nabla_s^{k-m+1}\zeta\|_{H^1} + \|\eta\nabla_{\alpha'}(\bJ(\zeta)\nabla_s^{k-m+1}\zeta)\|_{H^1} ) \\
&\hspace{-0.5in}= C_1 \Bigl( \|\zeta\|_{H^k} + \Bigl\| \eta \nabla_{\alpha'}\Bigl( \nabla_s^{k-m}\D_\zeta\zeta - \sum_{l=1}^{k-m} {{k-m} \choose l} (\partial_s^l A \nabla_s^{k-m-l+1}\zeta + \partial_s^l C \nabla_s^{k-m-l}\nabla_t\zeta ) \\
&\hspace{0.6in} + \sum_{l=1}^{k-m} {{k-m}\choose l} \nabla_s^l(\bJ(\zeta)) \nabla_s^{k-m-l+1}\zeta - \sum_{l=1}^{k-m} C \nabla_s^{l-1} [\nabla_s,\nabla_t] \nabla_s^{k-m-l}\zeta \Bigr) \Bigr\|_{H^1} \\
&\hspace{2.15in} + \|\eta\nabla_{\alpha'}\nabla_s^{k-m+1}\zeta\|_{H^1} + \|\eta\nabla_{\alpha'}(\bJ(\zeta)\nabla_s^{k-m+1}\zeta)\|_{H^1} \Bigr) \nonumber \\
&\hspace{-0.5in}\leq C_1 \Bigl( \|\D_\zeta\zeta\|_{H^k} + \|\zeta\|_{H^k} + \sum_{\beta\geq1,\gamma\geq0, \atop
\beta+\gamma=k} \| \eta \nabla^\beta(\bJ(\zeta)) \nabla^\gamma\nabla_s\zeta \|_{H^0} \Bigr),
\end{align*}
where in the last inequality we have used the inductive hypothesis to bound \(\|\eta\nabla_{\alpha'}\nabla_s^{k-m+1}\zeta\|_{H^1}\).

\begin{step2a}
In the \(k \neq 3\) case, we prove the following inequality: \begin{align} \label{eq:ellipticineqstep2a}
\sum_{\beta\geq1,\gamma\geq0, \atop
\beta+\gamma=k} \| \eta \nabla^\beta(\bJ(\zeta))\nabla^\gamma\nabla_s\zeta\|_{H^0} \leq C_1\|\zeta\|_{H^k}.
\end{align}
\end{step2a}

\noindent It follows from the assumption \(k \neq 3\) that if \(\beta, \gamma \geq 1\) satisfy \(\beta + \gamma = k+1\), then \(\min\{\beta, \gamma\} \leq \max\{k-2, 1\}\).
Furthermore, the assumption \(\|\zeta\|_{\wt H^k} \leq C_0\) implies the inequality \(\|\zeta\|_{\cC^{k-2}} \leq C_1\) by the embedding of \(H^1 \hookrightarrow \cC^0\) for one-dimensional domains whose lengths are bounded away from zero.
This, along with the assumed \(\cC^1\)-bound on \(\zeta\), yields \eqref{eq:ellipticineqstep2a} in the \(k \neq 3\) case.

\begin{step2b}
In the \(k=3\) case, we prove the following inequality: \begin{align} \label{eq:ellipticineqstep2b}
\sum_{\beta\geq 1,\gamma\geq 0, \atop
\beta+\gamma=3} \| \eta \nabla_\beta(\bJ(\zeta))\nabla_\gamma\nabla_s\zeta\|_{H^0} \leq C_1( \|\D_\zeta\zeta\|_{\wt H^3} + \| \zeta \|_{\wt H^3} + \delta^{1/2}\|\eta\nabla^3\zeta\|_{H^1} ).
\end{align}
\end{step2b}

\noindent The assumed \(\cC^1\)-bound on \(\zeta\) implies that the only term in the left-hand side of \eqref{eq:ellipticineqstep2b} that is not immediately bounded by \(C_1\|\zeta\|_{H^3}\) is \(\| \eta \nabla^2(\bJ(\zeta)) \nabla\nabla_s\zeta\|_{H^0}\).

Choose smooth maps \begin{gather*}
S, U\colon \wh u^*\rT M_{0211} \to \wh u^*\hom((\rT M_{0211})^{\otimes 2}, \rT M_{0211}), \quad T\colon \wh u^*\rT M_{0211} \to \wh u^*\hom((\rT M_{0211})^{\otimes 3}, \rT M_{0211}), \\
V\colon \wh u^*\rT M_{0211} \to u^*\hom(\rT M_{0211},\rT M_{0211})
\end{gather*}
so that the formula \begin{align} \label{eq:Jchainrule}
\wh\nabla^2(\wh J(\wh \zeta)) = S(\wh\zeta)(\wh\nabla^2\wh\zeta) + T(\wh\zeta)(\wh\nabla\wh\zeta, \wh\nabla\wh\zeta) + U(\wh\zeta)(\wh\nabla\zeta) + V(\wh\zeta)
\end{align}
holds, where the maps \(S, T, U, V\) preserve fibers but may not respect their linear structure.
Since \(\bJ\) is bounded in \(\cC^3\), \(S, T, U, V\) must be bounded in \(\cC^1\).
We may now use \eqref{eq:Jchainrule} to bound the hat-part of \( \|\eta\nabla^2(\bJ(\zeta))\nabla\nabla_s\zeta\|_{H^0}\): \begin{align} \label{eq:ellipticineqstep2bfirstineq}
\|\eta\wh\nabla^2(\wh J(\wh \zeta))\wh \nabla\wh \nabla_s\wh \zeta\|_{H^0} &\leq C_1( \|\wh\zeta\|_{H^2} + \| S(\wh\zeta)(\wh\nabla^2\wh\zeta)\wh\nabla\wh\nabla_s\wh\zeta\|_{H^0}) \nonumber \\
&= C_1 \bigl( \|\wh\zeta\|_{H^2} + \|\wh\nabla_s(S(\wh\zeta)(\eta\wh\nabla^2\wh\zeta)\wh\nabla\wh\zeta) - \wh\nabla_s(S(\wh\zeta)(\eta\wh\nabla^2\wh\zeta))\wh\nabla\wh\zeta \\
&\hspace{3in} + S(\wh\zeta)(\eta\wh\nabla^2)[\wh\nabla, \wh\nabla_s]\wh\zeta \|_{H^0} \bigr) \nonumber \\
&\leq C_1( \|\wh\zeta\|_{H^3} + \delta^{1/2}\| S(\zeta)(\eta\wh\nabla^2\wh\zeta)\wh\nabla\wh\zeta\|_{\cC^0H^1} ) \nonumber \\
&\leq C_1( \|\wh\zeta\|_{H^3} + \delta^{1/2}\| S(\zeta)(\eta\wh\nabla^2\wh\zeta)\|_{\cC^0H^1}\|\wh\nabla\wh\zeta\|_{\cC^0H^1} ), \nonumber
\end{align}
where in the last inequality we have used the \(\delta\)-independent Banach algebra property of \(\cC^0H^1\).
By Lemma~\ref{lem:improvedsobolev}, \(\|\wh\nabla\wh\zeta\|_{\cC^0H^1}\) is bounded by \(C_1(\|\D_\zeta\zeta\|_{\wt H^2} + \|\zeta\|_{H^3})\) and therefore by \(C_1\|\zeta\|_{\wt H^3}\); on the other hand, the \(\cC^1\)-bound on \(S\) and the \(\cC^1\)-bound on \(\zeta\) implies the inequality \(\| S(\wh \zeta)(\eta\wh\nabla^2\wh\zeta) \|_{\cC^0H^1} \leq C_1\|\eta\wh\nabla^2\wh\zeta\|_{\cC^0H^1}\).
Substituting these inequalities into \eqref{eq:ellipticineqstep2bfirstineq}, we obtain: \begin{align} \label{eq:ellipticineqstep2bsecondineq}
\| \eta \wh\nabla^2(\wh J(\wh \zeta)) \wh\nabla\wh\nabla_s\wh\zeta \|_{H^0} &\leq C_1( \|\zeta\|_{H^3} + \delta^{1/2} \|\zeta\|_{\wt H^3} \| \eta \nabla^2\zeta\|_{\cC^0H^1}) \leq C_1( \|\zeta\|_{H^3} + \delta^{1/2}\| \eta\nabla^2\zeta \|_{\cC^0H^1} ).
\end{align}
Next, we use Lemma~\ref{lem:improvedsobolev} to bound \(\| \eta \nabla^2 \zeta \|_{\cC^0H^1} \): \begin{align} \label{eq:ellipticineqstep2bthirdineq}
\| \eta\nabla^2\zeta \|_{\cC^0H^1} &\leq C_1( \|\zeta\|_{\wt H^3} + \|\nabla^2(\eta\zeta)\|_{\cC^0H^1}) \nonumber \\
&\leq C_1( \|\eta\zeta\|_{H^4} + \|\nabla\D_\zeta(\eta\zeta)\|_{\cC^0H^1} + \|\zeta\|_{\wt H^3}) + P( \|\nabla\zeta\|_{\cC^0H^1})(\|\zeta\|_{H^3_{\delta,\rho}} + \|\D_\zeta\zeta\|_{\cC^0H^1}) \\
&\leq C_1( \|\D_\zeta\zeta\|_{\wt H^3} + \|\zeta\|_{\wt H^3} + \|\eta\nabla^3\zeta\|_{H^1} ) + P( \|\zeta\|_{\wt H^3} )\|\zeta\|_{\wt H^3} \nonumber \\
&\leq C_1( \|\D_\zeta\zeta\|_{\wt H^3} + \|\zeta\|_{\wt H^3} + \|\eta\nabla^3\zeta\|_{H^1} ), \nonumber
\end{align}
where the last inequality follows from the assumed bound on \(\|\zeta\|_{\wt H^3}\).
Substituting \eqref{eq:ellipticineqstep2bthirdineq} into \eqref{eq:ellipticineqstep2bsecondineq}, we obtain: \begin{align} \label{eq:ellipticineqstep2bfourthineq}
\| \eta \wh\nabla^2(\wh J(\wh \zeta)) \wh\nabla\wh\nabla_s\wh\zeta \|_{H^0} &\leq C_1( \|\zeta\|_{H^3} + \delta^{1/2}( \|\D_\zeta\zeta\|_{\wt H^3} + \|\zeta\|_{\wt H^3} + \|\eta\nabla^3\zeta\|_{H^1} ) ) \\
&\leq \nonumber C_1( \|\D_\zeta\zeta\|_{\wt H^3} + \|\zeta\|_{\wt H^3} + \delta^{1/2}\|\eta\nabla^3\zeta\|_{H^1} ). \nonumber
\end{align}

To bound the \(02\)-part of \(\|\eta \nabla^2(\bJ(\zeta))\nabla\nabla_s\zeta\|_{H^0 }\), we use the the fact that the domains \(Q_{02,\delta,r}\) satisfy a uniform cone condition: \begin{align} \label{eq:ellipticineqstep2bfifthineq}
\|\eta \nabla_{02}^2(J_{02}(\zeta_{02}))\nabla_{02}\nabla_{02,s}\zeta_{02}\|_{H^0) } &\sr{\mathclap{\text{H\"{o}lder}}}{\leq} C_1\|\nabla_{02}^2(J_{02}(\zeta_{02}))\|_{L^4}\|\nabla_{02}^2\zeta\|_{ L^4 } \\
&\leq C_1(1 + \|\zeta\|_{H^3 })\|\zeta\|_{H^3 }, \nonumber
\end{align}
where the second inequality follows from the Sobolev embedding \(H^1 \hookrightarrow L^4\) for two-dimensional domains satisfying a cone condition.
Combining \eqref{eq:ellipticineqstep2bfourthineq} and \eqref{eq:ellipticineqstep2bfifthineq} and using the assumed bound on \(\|\zeta\|_{\wt H^3}\) yields the desired bound: \begin{align*}
\|\eta\nabla^2(\bJ(\zeta))\nabla\nabla_s\zeta \|_{H^0 } &\leq C_1( \|\D_\zeta\zeta\|_{\wt H^3} + \|\zeta\|_{\wt H^3 } + \delta^{1/2} \|\eta\nabla^3\zeta\|_{H^1 }).
\end{align*}

\begin{step3}
We prove Lemma~\ref{lem:ellipticineq}.
\end{step3}

The \(k \neq 3\) case of Lemma~\ref{lem:ellipticineq} is an immediate consequence of Steps 1b and 2a.

Toward the \(k = 3\) case of Lemma~\ref{lem:ellipticineq}, let us show that there exists \(\delta_0 \in (0, r_1]\) such that \eqref{eq:ellipticineq} holds for \(\delta \in (0, \delta_0]\).
Combining \eqref{eq:ellipticineqstep1a} and \eqref{eq:ellipticineqstep2b} yields the following inequality: \begin{align} \label{eq:ellipticineqstep3firstineq}
\| \eta\nabla^3\zeta \|_{H^1} \leq C_1( \|\D_\zeta\zeta\|_{\wt H^3} + \|\zeta\|_{\wt H^3} + \delta^{1/2}\| \eta\nabla^3\zeta \|_{H^1} ).
\end{align}
If we set \(\delta_0 := \min \{ (2C_1)^{-2} , r_1\}\), where \(C_1\) is the constant appearing in \eqref{eq:ellipticineqstep3firstineq}, then \eqref{eq:ellipticineqstep3firstineq} yields the uniform inequality \(\|\eta \nabla^3\zeta\|_{H^1} \leq C_1(\|\D_\zeta\zeta\|_{\wt H^3 } + \|\zeta\|_{\wt H^3 })\) for all \(\delta \in (0, \delta_0]\).

It remains to establish the \(k=3\) case of \eqref{eq:ellipticineq} for \(\delta \in [\delta_0,r_1]\).
To do so, we begin by bounding \( \| \nabla^2(\bJ(\zeta)) \nabla^2 \zeta\|_{ H^0}\), using the fact that the domains \(\bQ_{\delta,r}\) satisfy a uniform cone condition for \(\delta \in [\delta_0,r_1/4]\): \begin{align} \label{eq:ellipticineqstep3secondineq}
\| \nabla^2(\bJ(\zeta)) \nabla^2 \zeta\|_{ H^0} \sr{\text{H\"{o}lder}}{\leq} C_1\|\nabla^2(\bJ(\zeta))\|_{L^4}\|\nabla^2\zeta\|_{L^4} &\sr{\mathclap{\text{Sobolev}}}{\leq} C_1(1+\|\zeta\|_{H^{2,4} })\|\zeta\|_{H^{2,4} } \\
&\leq C_1(1 + \|\zeta\|_{H^3 })\|\zeta\|_{H^3 } \leq C_1\|\zeta\|_{H^3 }. \nonumber
\end{align}
Substituting \eqref{eq:ellipticineqstep3secondineq} into \eqref{eq:ellipticineqstep1a} yields the \(k=3\) case of \eqref{eq:ellipticineq} for \(\delta \in [\delta_0,r_1/4]\): \begin{align*}
\| \eta \nabla^3 \zeta \|_{H^1 } \leq C_1 \Bigl( \|\D_\zeta\zeta\|_{H^3 } + \|\zeta\|_{H^3 } + \sum_{\beta \geq 1,\gamma \geq 0, \atop
\beta+\gamma=3} \| \eta \nabla^\beta(\bJ(\zeta)) \nabla^\gamma\nabla_s\zeta \|_{H^0 } \Bigr) \leq C_1( \| \D_\zeta\zeta \|_{H^3 } + \|\zeta\|_{H^3 }).
\end{align*}
\end{proof}

\begin{proof}[Proof of Lemma~\ref{lem:masterestimate}]
Lemma~\ref{lem:masterestimate} follows immediately from Lemmata~\ref{lem:improvedsobolev} and \ref{lem:ellipticineq}.
Indeed, choose \(\eta\colon \ol Q_{02,\delta,r_2} \to \R\) to be a smooth function with \(\eta|_{\ol Q_{02,\delta,r_1}} \equiv 1\) and \(\supp \eta \subset Q_{02,\delta,r_2}\).
\(C_1\) and \(P\) will denote a \(\delta\)-independent constant and a \(\delta\)-independent polynomial that may change from line to line.
Lemma~\ref{lem:ellipticineq} yields a bound on \(\|\zeta\|_{H^{k+1}(\bQ_{\delta,r_1}) }\): \begin{align} \label{eq:masterestimatefirstineq}
\|\zeta\|_{H^{k+1}(\bQ_{\delta,r_1}) } \leq \|\eta\zeta\|_{H^{k+1}(\bQ_{\delta,r_2}) } \leq C_1\bigl( \|\zeta\|_{\wt H^k(\bQ_{\delta,r_2}) } + \|\D_\zeta\zeta\|_{\wt H^k(\bQ_{\delta,r_2}) }\bigr).
\end{align}
Lemma~\ref{lem:improvedsobolev} yields a bound on \(\sum_{l = 0}^{k-1} \|\nabla^l\zeta\|_{\cC^0H^1(\bQ_{\delta,r_1}) }\): \begin{align} \label{eq:masterestimatesecondineq}
\sum_{l = 0}^{k-1} \|\nabla^l\zeta\|_{\cC^0H^1(\bQ_{\delta,r_1}) } &\leq C_1\bigl( \|\zeta\|_{H^{k+1}(\bQ_{\delta,r_1})} + \|\D_\zeta\zeta\|_{\wt H^k(\bQ_{\delta,r_1})}\bigr) + \\
&\hspace{1in} + P\bigl(\|\zeta\|_{\wt H^k(\bQ_{\delta,r_1})}\bigr) \cdot \bigl( \|\zeta\|_{H^k(\bQ_{\delta,r_1})} + \|\D_\zeta\zeta\|_{\wt H^{k-1}(\bQ_{\delta,r_1})} \bigr) \\
&\sr{\mathclap{\eqref{eq:masterestimatefirstineq}}}{\leq} C_1\bigl( \|\D_\zeta\zeta\|_{\wt H^k(\bQ_{\delta,r_2})} + \|\zeta\|_{\wt H^k(\bQ_{\delta,r_2})}\bigr), \nonumber
\end{align}
where in the second inequality we have used the assumed bound on \(\|\zeta\|_{\wt H^k(\bQ_{\delta,r_1})}\).
Combining \eqref{eq:masterestimatefirstineq} and \eqref{eq:masterestimatesecondineq} yields \(\|\zeta\|_{\wt H^{k+1}(\bQ_{\delta,r_1}) } \leq C_1( \|\D_\zeta\zeta\|_{\wt H^k(\bQ_{\delta,r_2})} + \|\zeta\|_{\wt H^k(\bQ_{\delta,r_2})} )\), which can be used to inductively prove the desired inequality \eqref{eq:masterestimate}.
\end{proof}

We will not use the following proposition in this paper.
However, it will be used in \cite{b:thesis} to show that the linearized Cauchy--Riemann operator defines a Fredholm section.
 
\begin{proposition}[linear elliptic estimate for \(k = 2\)] \label{prop:k=2linearineq}
There is a constant \(\eps > 0\) and for every \(C_0 > 0\), \(k \geq 0\), and \(0 < r_1 < r_2 < \rho\) there is a constant \(C_1\) such that the inequality \begin{align*}
\| \zeta\|_{H^{k+1}(\bQ_{\delta,r_1})} \leq C_1 \bigl( \|\D_\xi\zeta\|_{H^k(\bQ_{\delta,r_2})} + \|\zeta\|_{H^0(\bQ_{\delta,r_2})} \bigr)
\end{align*}
holds for any choice of \(\delta \in (0,r_1/4]\), a coherent collection \(\bj\) of complex structures on \(\ol \bQ_{\delta,\rho}\) with \(\|\bj - \bi\|_{\cC^0} \leq \eps\) and \(\|\bj - \bi\|_{\cC^2} \leq C_0\), a pair \(\bJ\) of compatible almost complex structures on \(\ol \bQ_{\delta,\rho}\) which are contained in a \(\cC^2\)-ball of radius \(C_0\) and which induce by \eqref{eq:metrics} whose pairwise constants of equivalence are bounded above by \(C_0\), and two pairs of sections \(\zeta, \xi \in \Gamma_{u_\delta}^{k+2}(\bQ_{\delta,r_2})\) with \(\|\xi\|_{\cC^0} \leq \eps\) and \(\|\xi\|_{\cC^1} \leq C_0\).
\end{proposition}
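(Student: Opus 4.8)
The plan is to follow the proof of Lemma~\ref{lem:ellipticineq} essentially verbatim, with two simplifications coming from the fact that here the operator $\D_\xi$ is genuinely \emph{linear} in $\zeta$: its coefficients $A$, $C$, and the pulled-back complex structure $\bJ(\xi)$ depend only on the \emph{fixed} section $\xi$, not on $\zeta$. Therefore none of the quadratic-in-$\zeta$ quantities appear that forced the introduction of the modified norms $\wt H^k$, and the entire argument can be run with the standard norms $\|-\|_{H^k(\bQ_{\delta,r})}$ throughout — this is exactly why the asserted right-hand side contains $\|\D_\xi\zeta\|_{H^k(\bQ_{\delta,r_2})}$ rather than $\|\D_\xi\zeta\|_{\wt H^k}$. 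Likewise there is no nonlinear inhomogeneous term $F$ to keep track of. I would build the estimate from the two ``linear'' ingredients already available in \S\ref{ss:estimate}, namely the $k=0$ estimate Lemma~\ref{lem:k=0ellipticineq} and the Sobolev estimate Lemma~\ref{lem:improvedsobolev}, both of which are stated for $\D_\xi$ with $\xi$ fixed.

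First I would treat the $k=0$ case. Since $\zeta$ is not assumed to have compact support, choose a cutoff $\eta=\eta(s)$ with $\eta\equiv 1$ on $\bQ_{\delta,r_1}$, $\supp\eta\subset Q_{02,\delta,r_2}$ (and correspondingly in $\wh Q_{\delta,r_2}$), and $\|\eta\|_{\cC^1}$ bounded independently of $\delta$; then apply Lemma~\ref{lem:k=0ellipticineq} to $\eta\zeta$, whose support is compact in the lateral directions. No cutoff is needed near the seams $t=0$ and $t=\delta$ because, by Lemma~\ref{lem:conn}, parallel transport under $\nabla$ preserves the linearized boundary conditions \eqref{eq:linearizedBCs}. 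The commutator $[\D_\xi,\eta]$ is zeroth order in $\zeta$ with $\cC^0$-bounded coefficients, so it contributes only $C_1\|\zeta\|_{H^0(\bQ_{\delta,r_2})}$, giving $\|\zeta\|_{H^1(\bQ_{\delta,r_1})}\leq C_1\bigl(\|\D_\xi\zeta\|_{H^0(\bQ_{\delta,r_2})}+\|\zeta\|_{H^0(\bQ_{\delta,r_2})}\bigr)$.

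Next I would induct on $k$, bounding $\|\eta\nabla_\alpha\zeta\|_{H^1}$ for every multiindex $\alpha\in\{s,t\}^k$ by induction on $n_t(\alpha)$ exactly as in Steps~1a--1b of the proof of Lemma~\ref{lem:ellipticineq}: for $n_t(\alpha)\geq 1$ write $\nabla_t$ in terms of $\D_\xi$, $A-\bJ(\xi)$, and $C^{-1}$ (invertible with controlled inverse since $\|\bj-\bi\|_{\cC^0}\leq\eps$), trading a $\nabla_t$ for a $\nabla_s$ plus lower-order terms, and reduce to the all-$s$ multiindex, to which the $k=0$ estimate above applies. Commuting $\nabla_s$ past $\D_\xi$ and expanding produces four families of terms: $\eta\nabla_s^k\D_\xi\zeta$, bounded by $\|\D_\xi\zeta\|_{H^k}$; terms $\partial_s^lA\cdot\nabla^{\bullet}\zeta$ and $\partial_s^lC\cdot\nabla^{\bullet}\zeta$, bounded by $C_1\|\zeta\|_{H^k}$ using the $\cC^k$-bound on $\bj$; curvature-commutator terms $[\nabla_s,\nabla_t]$, bounded by $C_1\|\zeta\|_{H^{k-1}}$ since the curvature of $\nabla$ is a tensor with $\cC^{k-1}$-bounded coefficients; and terms $\eta\,\nabla^\beta(\bJ(\xi))\,\nabla^\gamma\nabla_s\zeta$ with $\beta\geq 1$, $\beta+\gamma=k$. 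For $\gamma\leq\max\{k-2,1\}$ one puts $\nabla^\gamma\nabla_s\zeta$ into $\cC^0$ via $H^1\hookrightarrow\cC^0$ in one dimension and $\nabla^\beta(\bJ(\xi))$ into $H^0$, controlled $\delta$-independently in terms of $\|\xi\|_{\cC^1}$ and $\|\xi\|_{H^k}$; for $\beta\leq\max\{k-2,1\}$ one swaps the roles.

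The single remaining obstacle — which for the advertised case $k=2$ is the term $\eta\,\nabla^2(\bJ(\xi))\,\nabla\nabla_s\zeta$ encountered when bounding third derivatives — is identical in nature to the one resolved in Step~2b of the proof of Lemma~\ref{lem:ellipticineq}: it involves two factors each only at the $H^1$-into-$L^4$ threshold, and on the thin slab $\wh Q_{\delta,r}=(-r,r)\times[0,\delta]$ there is no uniform cone condition, so Hölder alone does not close it. I would dispose of it by the same dichotomy. On the $02$-part the domains $Q_{02,\delta,r}$ satisfy a uniform cone condition for all $\delta\in(0,r_1/4]$, so the two-dimensional Sobolev embedding $H^1\hookrightarrow L^4$ together with Hölder bounds the $02$-part by $C_1(1+\|\zeta\|_{H^3})\|\zeta\|_{H^3}\leq C_1\|\zeta\|_{H^3}$. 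On the hat-part, for $\delta$ below a threshold $\delta_0=\min\{(2C_1)^{-2},r_1/4\}$ one peels a factor $\delta^{1/2}$ from a $t$-integral of width $\delta$ and, after bounding the remaining factor via Lemma~\ref{lem:improvedsobolev}, absorbs $\delta^{1/2}\|\eta\nabla^{k+1}\zeta\|_{H^1}$ into the left-hand side; for $\delta\in[\delta_0,r_1/4]$ the slab also satisfies a uniform cone condition and one argues as on the $02$-part. Assembling the bounds on $\|\nabla_\alpha\zeta\|_{H^1}$ over all $|\alpha|=k$, removing the cutoff, and then iterating the resulting inequality $\|\zeta\|_{H^{k+1}(\bQ_{\delta,r_1})}\leq C_1\bigl(\|\D_\xi\zeta\|_{H^k(\bQ_{\delta,r_2})}+\|\zeta\|_{H^k(\bQ_{\delta,r_2})}\bigr)$ down through $H^k$, $H^{k-1}$, \ldots, $H^0$ on a shrinking nest of radii (as at the end of the proof of Lemma~\ref{lem:masterestimate}) yields the claimed estimate.
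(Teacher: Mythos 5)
Your overall strategy — rerunning the proof of Lemma~\ref{lem:masterestimate} (via Lemmata~\ref{lem:k=0ellipticineq}, \ref{lem:improvedsobolev}, \ref{lem:ellipticineq}) with $H^k$ norms in place of $\wt H^k$, exploiting the linearity in $\zeta$ — is indeed what the paper's one-sentence proof (``an easier version of the proof of Lemma~\ref{lem:masterestimate}'') intends, so you have the right skeleton, and your treatment of the cutoff, the $n_t(\alpha)$-induction, and the final iteration through radii are all sound. The gap is concentrated in how you close the cross-terms $\eta\,\nabla^\beta(\bJ(\xi))\,\nabla^\gamma\nabla_s\zeta$. When $\gamma$ is small you propose to put $\nabla^\gamma\nabla_s\zeta$ into $\cC^0$ ``via $H^1\hookrightarrow\cC^0$ in one dimension.'' That move needs $\cC^0H^1$-control of $\nabla^{\gamma+1}\zeta$ on $s$-slices, and that is \emph{exactly} the thing the $\wt H^k$ norms were introduced to supply: on the thin slab $\wh Q_{\delta,r}$ the plain $H^m$ norms do not dominate $\cC^0H^1$ or pointwise norms with $\delta$-independent constants, and — crucially — the proposition assumes $\cC^0$- and $\cC^1$-bounds on $\xi$ only, not on $\zeta$. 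In a linear estimate the unknown $\zeta$ must stay in the Sobolev scale; only the coefficient $\nabla^\beta(\bJ(\xi))$ may be measured pointwise, and the stated $\cC^1$-bound on $\xi$ closes that only for $\beta\leq 1$, i.e.\ $k\leq 1$. Already at $k=2$ the term $\eta\,\nabla^2(\bJ(\xi))\,\nabla_s\zeta$ survives and needs either a higher $\cC$- or $\wt H$-type bound on $\xi$, or the $\delta^{1/2}$-absorption of Step~2b applied to the $S(\wh\xi)(\wh\nabla^2\wh\xi)$ piece of $\wh\nabla^2(\wh J(\wh\xi))$ (rather than to a $\zeta$-quantity) — neither of which you supply, and your mid-argument invocation of $\|\xi\|_{H^k}$ as a uniformly controlled quantity is not licensed by the hypotheses.

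A further, smaller slip: the term you single out as the $k=2$ obstruction, $\eta\,\nabla^2(\bJ(\xi))\,\nabla\nabla_s\zeta$, has $\beta+\gamma=3$, so it belongs to the $k=3$ step of the induction (the borderline case flagged in Step~2b of Lemma~\ref{lem:ellipticineq}); for the proposition's $k=2$ the cross-terms have $\beta+\gamma=2$, and the delicate one is $\eta\,\nabla^2(\bJ(\xi))\,\nabla_s\zeta$.
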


\noindent The proof is an easier version of the proof of Lemma~\ref{lem:masterestimate}.

\subsection{Proof of Thm.~\ref{thm:nonfoldedstripshrink}}
\label{ss:compactness_proof}

Now that we have established the necessary definitions and estimates in \S\S\ref{ss:CsandACs}--\ref{ss:estimate}, we are finally ready to prove Thm.~\ref{thm:nonfoldedstripshrink}.

\begin{proof}[Proof of Theorem~\ref{thm:nonfoldedstripshrink}]
We divide the proof into steps: in Step 1, we show that the squiggly strip quilts converge \(\cC^0_\loc\) in a subsequence.
In Step 2, we upgrade this convergence to \(\cC^k_\loc\).
Finally, we prove in Step 3 that if the gradient satisfies a lower bound at a sequence of points with limit on the boundary, then at least one of \(v_0^\infty,v_2^\infty\) is nonconstant.
Throughout this proof, \(C_1\) will be a constant that may change from line to line.

\begin{step1}
After passing to a subsequence, \((v_0^\nu(t - \delta^\nu))\), \((v_1^\nu|_{t=0})\), \((v_2^\nu(t + \delta^\nu))\) converge \(\cC^0_\loc\) to a \((J_0^\infty,J_2^\infty,i)\)-holomorphic size-\(\rho\) degenerate strip quilt \((v_0^\infty,v_1^\infty,v_2^\infty)\) for \(L_{01} \times_{M_1} L_{12}\).
\end{step1}

\noindent The Arzel\`{a}--Ascoli theorem implies that there exist continuous maps \begin{align*}
v_0^\infty\colon (-\rho,\rho) \times (-\rho,0] \to M_0, \qquad v_1^\infty\colon (-\rho,\rho) \to M_1, \qquad v_2^\infty\colon (-\rho,\rho) \times [0,\rho) \to M_2
\end{align*}
such that after passing to a subsequence, \((v_0^\nu(s,t-\delta^\nu))\), \((v_1^\nu|_{t=0})\), \((v_2^\nu(s, t+\delta^\nu))\) converge \(\cC^0_\loc\) to \(v_0^\infty\), \(v_1^\infty\), \(v_2^\infty\).
Standard compactness for pseudoholomorphic curves (e.g.\ \cite[Theorem~B.4.2]{ms:jh}) implies that this convergence takes place in \(\cC^k_\loc\) on the interior (i.e.\ away from the line \(t = 0\)); in particular, \(v_0^\infty\) resp.\ \(v_2^\infty\) are \(J_0^\infty\)- resp.\ \(J_2^\infty\)-holomorphic on the interior, hence \(\cC^\infty\) by \cite[Theorem~B.4.1]{ms:jh}.
In fact, we claim that \(v_0^\infty\) and \(v_2^\infty\) are \(\cC^\infty\) on their full domains, and that they satisfy a generalized Lagrangian boundary condition in \(L_{01} \times_{M_1} L_{12}\) at \(t = 0\).

Denote by \(\ol v\) the map \begin{align*}
\ol v := (v_0^\infty(-,0), v_1^\infty(-), v_1^\infty(-), v_2^\infty(-,0))\colon (-\rho,\rho) \to M_0^- \times M_1 \times M_1^- \times M_2.
\end{align*}
To show that \(v_0^\infty\), \(v_2^\infty\) satisfy a generalized Lagrangian boundary condition in \(L_{01} \circ L_{12}\), we will show that for any \(s \in (-\rho,\rho)\), \(\ol v(s)\) lies in \(L_{01} \times_{M_1} L_{12}\).
The containment \(\ol v(s) \in M_0 \times \Delta_{M_1} \times M_2\) is clear.
To show the containment \(\ol v(s) \in L_{01} \times L_{12}\), we will show that \((v_0^\infty(s,0), v_1^\infty(s))\) lies in \(L_{01}\); the proof that \((v_1^\infty(s), v_2^\infty(s,0))\) lies in \(L_{12}\) is analogous.
Since \((v_0^\nu(s,-\delta^\nu), v_1^\nu(s,-\delta^\nu))\) lies in \(L_{01}\), and since \((v_1^\nu|_{t = 0})\) converges \(\cC^0_\loc\) to \(v_1^\infty\), it suffices to show that the distances \(d(v_1^\nu(s,-\delta^\nu), v_1^\nu(s, 0))\) converge to zero.
This follows from the uniform gradient bound on \((v_1^\nu)\) and the convergence of \(\delta^\nu\) to zero.

Let us show that \(v_0^\infty\) and \(v_2^\infty\) are \(\cC^\infty\).
We have already concluded that these maps are \(\cC^\infty\) on the interior, so it only remains to show that they are \(\cC^\infty\) at the boundary points, w.l.o.g.\ at \((0,0)\).
For that purpose we choose a neighborhood \(U \subset L_{01} \times_{M_1} L_{12}\) of \(\ol v(0)\) such that \(\pi_{02}|_U\colon U \to M_{02}\) is a smooth embedding, hence \(\pi_{02}(U) \subset M_{02}\) is a noncompact embedded Lagrangian.
Since \(v_0^\infty\) and \(v_2^\infty\) are continuous we find \(\eps > 0\) such that \(\ol v((-\eps, \eps))\) is contained in \(U\), which implies that \((v_0^\infty, v_2^\infty)((-\eps, \eps) \times \{0\})\) is contained in \(\pi_{02}(U)\).
The maps $v_0^\nu$ and $v_2^\nu$ have uniformly-bounded derivatives and converge $\cC^1_\loc$ to $v_0^\infty, v_2^\infty$ on the interior of their domains, hence $(v_0^\infty(s,-t),v_2^\infty(s,t))$ is in $W^{1,4}((-\eps,\eps)\times[0,\eps))$.
Standard elliptic regularity (e.g.\ \cite[Theorem B.4.1]{ms:jh}\footnote{
The hypothesis of \cite{ms:jh} that the Lagrangian submanifold is closed can be removed.}) applied to \((v_0^\infty(s,-t), v_2^\infty(s,t))\) now shows that \(v_0^\infty\) and \(v_2^\infty\) are \(\cC^\infty\) at \((0,0)\).
Since \(\pi_{02}|_U\) is a diffeomorphism onto its image, \(\ol v\) is \(\cC^\infty\) at \(0\) and thus we have shown that \(v_0^\infty, v_1^\infty, v_2^\infty\) are \(\cC^\infty\).

\begin{step2}
After passing to a further subsequence, the convergence of \((v_0^\nu(s,t-\delta^\nu))\), \((v_1^\nu|_{t = 0})\), \((v_2^\nu(s, t + \delta^\nu))\) takes place in \(\cC^k_\loc\).
\end{step2}

\noindent In order to establish \(\cC^k_\loc\) convergence near \((-\rho,\rho)\times\{0\}\), we cannot rely on \cite[Theorem B.4.2]{ms:jh}.
Rather, we will establish uniform Sobolev bounds for all three sequences of maps.
The compact Sobolev embeddings \(H^{k+2}\hookrightarrow\cC^k\) resp.\ \(H^{k+1} \hookrightarrow \cC^k\) for two-dimensional resp.\ one-dimensional domains will then provide \(\cC^k_\loc\)-convergent subsequences.

Set \(\bJ^\nu\) resp.\ \(\bj^\nu\) to be the coherent pair of almost complex structures resp.\ coherent collection of complex structures resulting from the transformations \eqref{eq:tripletofoldedACs} resp.\ \eqref{eq:tripletofoldedCs} applied to \(J_0^\nu,J_1^\nu,J_2^\nu\) resp.\ \(j^\nu\), and set \((w_{02}^\nu, \wh w^\nu)\) to be the \((\bJ^\nu,\bj^\nu)\)-holomorphic size-\((\delta^\nu,\rho)\) folded strip quilt resulting from the transformation \eqref{eq:fromtripletofolded} applied to \((v_0^\nu,v_1^\nu,v_2^\nu)\).
Then \(w_{02}^\nu\) resp.\ \(\wh w^\nu|_{t = 0}\) converge \(\cC^0_\loc\) to \(u_{02}(s,t) := (v_0^\infty(s,-t), v_2^\infty(s,t))\) resp.\ \(\ol u(s,t) := (v_0^\infty(s,0), v_2^\infty(s,0), v_1^\infty(s), v_1^\infty(s))\), where we have used the assumed \(\cC^1\)-bounds on \((v_0^\nu)\), \((v_2^\nu)\).
Since \((J_0^\nu)\) resp.\ \((J_1^\nu|_{t=0})\) resp.\ \((J_2^\nu)\) converge \(\cC^{k+1}\) to \(J_0^\infty\) resp.\ \(J_1^\infty\) resp.\ \(J_2^\infty\), and since \((J_0^\nu), (J_1^\nu), (J_2^\nu)\) are \(\cC^{k+2}\)-bounded, \((J_{02}^\nu)\) resp.\ \((\wh J^\nu|_{t=0})\) converge \(\cC^{k+1}\) to \(J_{02}^\infty\) resp.\ \(\wh J^\infty\); since \(j^\nu\) converges in \(\cC^\infty_\loc\) to the standard complex structure \(i\colon \tfrac \partial {\partial s} \mapsto \tfrac \partial {\partial t}, \tfrac \partial {\partial t} \mapsto -\tfrac \partial {\partial s}\), the components of \(\bj^\nu\) converge in \(\cC^\infty_\loc\) to the standard coherent collection \(\bi\) of complex structures, \begin{align} \label{eq:standardj}
\bi := ( (i,i), (i,i,i,i) ).
\end{align}

Fix \(\rho' \in (0, \rho)\) and choose \(\rho > \rho_1 > \rho_2 > \cdots > \rho_{k+2} = \rho'\).
Set \(u_{\delta^\nu}\) to be the restriction and extension to \(\bQ_{\delta^\nu,\rho_1}\) of \(u\) as defined in \eqref{eq:uextend}.
Due to the \(\cC^0_\loc\)-convergence of \(w^\nu_{02}\) resp.\ \(\wh w^\nu|_{t=0}\) to \(u_{02}\) resp.\ \(\ol u\) and the uniform \(\cC^1\)-bounds on \(\wh w^\nu\), we can express \(w^\nu_{02}\) resp.\ \(\wh w^\nu\) on \(Q_{02,\delta,\rho_1}\) resp.\ \(\wh Q_{\delta^\nu,\delta,\rho_1}\) for sufficiently large \(\nu\) in terms of the corrected exponential maps \(e_{u_{02,\delta^\nu}}\) resp.\ \(e_{\wh u_{\delta^\nu}}\) and sections \((\zeta_{02}^\nu,\wh\zeta^\nu)\in \Gamma^{k+1}_{u_{\delta^\nu}}\) as introduced in \S\ref{ss:estimate}:
\begin{align*}
w_{02}^\nu = e_{u_{02,\delta^\nu}}(\zeta_{02}), \qquad \wh w^\nu = e_{\wh u_{\delta^\nu}}(\wh\zeta).
\end{align*}
The sections \(\zeta_{02}^\nu, \wh\zeta^\nu\) converge to zero in \(\cC^0\) as \(\nu \to \infty\), are uniformly bounded in \(\cC^1\), and satisfy boundary conditions \eqref{eq:linearizedBCs} in the linearizations of \((L_{01} \times L_{12})^T\) and \(M_0 \times \Delta_{M_1} \times M_2\).

\medskip

\noindent {\bf Iteration claim.} {\it We bound \(\|\D^\nu_{\zeta^\nu}\zeta^\nu\|_{\wt H^l(\bQ_{\delta^\nu,\rho_l})}\) and \(\|\zeta^\nu\|_{\wt H^l(\bQ_{\delta^\nu,\rho_l})}\) for \(l \in [1, k+2]\) by induction on \(l\), where \(\wt H^l\) and \(\D^\nu\) are the modified Sobolev space and the linear delbar operator defined in \S\ref{ss:estimate} using \(\bJ^\nu\), \(\bj^\nu\), and the pair of connections \(\nabla = (\nabla_{02}, \wh \nabla)\) constructed in Lemma~\ref{lem:conn}.}

\medskip

\noindent The first key fact for this claim is the formula \begin{align} \label{eq:Dzetaisalmostzero}
\D^\nu_{\zeta^\nu}\zeta^\nu = \rd e_{u_{\delta^\nu}}(\zeta^\nu)^{-1}\bigl( \dbar_{\bJ^\nu,\bj^\nu}(e_{u_{\delta^\nu}}(\zeta^\nu)) - F^\nu(\zeta^\nu) \bigr) =: G^\nu(\zeta^\nu),
\end{align}
justified in \eqref{eq:dbarvsD}, where \(\dbar_{\bJ^\nu,\bj^\nu}\) is the nonlinear delbar operator defined in \eqref{eq:delbar}.
The relevant fact here is that \(G^\nu\) is a pair of smooth maps \begin{align*}
G_{02}^\nu\colon u_{02,\delta^\nu}^*\rT M_{02} \to u_{02,\delta^\nu}^*\rT M_{02}, \qquad \wh G^\nu\colon \wh u_{\delta^\nu}^*\rT M_{0211} \to \wh u_{\delta^\nu}^*\rT M_{0211}
\end{align*}
that preserve fibers but do not necessarily respect their linear structure.
Furthermore, for any \(k\), \(G^\nu\) is uniformly bounded in \(\cC^k\).
The second key fact is Lemma~\ref{lem:masterestimate}, which is a collection of \(\delta\)-independent elliptic estimates.

Since \(\zeta^\nu\) is uniformly bounded in \(\cC^1\), \(\|\zeta^\nu\|_{H^1(\bQ_{\delta^\nu,\rho_1})}\) and \(\|\D^\nu_{\zeta^\nu}\zeta^\nu\|_{H^1(\bQ_{\delta^\nu,\rho_1})} = \|G^\nu(\zeta^\nu)\|_{H^1(\bQ_{\delta^\nu,\rho_1})}\) are uniformly bounded.
This establishes the base case of the iteration.

Next, say that \(\zeta^\nu\) and \(\D^\nu_{\zeta^\nu}\zeta^\nu\) are uniformly bounded in \(\wt H^l(\bQ_{\delta^\nu,\rho_l})\) for some \(l \in [1,k+1]\).
Lemma~\ref{lem:masterestimate} yields: \begin{align} \label{eq:stripshrinkstep2afirstpart}
\|\zeta^\nu\|_{\wt H^{l+1}(\bQ_{\delta^\nu,\rho_{l+1}})} \leq C_1 \bigl( \|\D^\nu_{\zeta^\nu}\zeta^\nu\|_{\wt H^l(\bQ_{\delta^\nu,\rho_l})} + \|\zeta^\nu\|_{H^0(\bQ_{\delta^\nu,\rho_l})} \bigr).
\end{align}
It remains to bound \(\|\D^\nu_{\zeta^\nu}\zeta^\nu\|_{\wt H^{l+1}(\bQ_{\delta^\nu,\rho_{l+1}})}\).
Since \(\zeta^\nu\) is uniformly bounded in \(\wt H^{l+1}(\bQ_{\delta^\nu,\rho_{l+1}})\), it is uniformly bounded in \(\cC^{l-1}(\bQ_{\delta^\nu,\rho_{l+1}})\) by Lemma~\ref{lem:improvedsobolev}, which allows us to bound \(\|\D^\nu_{\zeta^\nu}\zeta^\nu\|_{\wt H^{l+1}(\bQ_{\delta^\nu,\rho_{l+1}})}\): \begin{align*}
\|\D^\nu_{\zeta^\nu}\zeta^\nu\|_{\wt H^{l+1}(\bQ_{\delta^\nu,\rho_{l+1}})} &\sr{\mathclap{\eqref{eq:Dzetaisalmostzero}}}{\leq} C_1 \Bigl( \sum_{\lambda_1,\ldots,\lambda_m \geq 1, \atop
\lambda_1 + \cdots + \lambda_m \leq l+1} \bigl\| |\nabla^{\lambda_1}\zeta^\nu| \cdots |\nabla^{\lambda_m}\zeta^\nu| \bigr\|_{H^0(\bQ_{\delta^\nu,\rho_{l+1}})} \\
&\hspace{1in} + \sum_{\lambda_1, \ldots, \lambda_m \geq 1, \atop
\lambda_1 + \cdots + \lambda_m} \bigl\||\nabla^{\lambda_1}\zeta^\nu|\cdots |\nabla^{\lambda_m}\zeta^\nu|\bigr\|_{\cC^0H^0(\bQ_{\delta^\nu,\rho_{l+1}})} \\
&\hspace{0.85in} + \sum_{\lambda_1\geq0,\lambda_2,\ldots,\lambda_m \geq 1, \atop
\lambda_1 + \cdots + \lambda_m \leq l-1} \bigl\||\nabla_s\nabla^{\lambda_1}\zeta^\nu| |\nabla^{\lambda_2}\zeta^\nu|\cdots |\nabla^{\lambda_m}\zeta^\nu| \bigr\|_{\cC^0H^0(\bQ_{\delta^\nu,\rho_{l+1}})} \Bigr) \nonumber \\
&\leq C_1 \Bigl( \|\zeta^\nu\|_{H^{l+1}(\bQ_{\delta^\nu,\rho_{l+1}})} + \sum_{m=0}^{l-1} \|\nabla^m\zeta^\nu\|_{\cC^0H^1(\bQ_{\delta^\nu,\rho_{l+1}})} + 1\Bigr) \\
&\leq C_1 \bigl(\|\zeta^\nu\|_{\wt H^{l+1}(\bQ_{\delta^\nu},\rho_{l+1})} + 1\bigr). \nonumber
\end{align*}
This, together with \eqref{eq:stripshrinkstep2afirstpart}, establishes the iteration step and completes the Iteration Claim.

\medskip

The uniform bounds on \(\|\zeta^\nu\|_{\wt H^{k+2}(\bQ_{\delta^\nu,\rho_{k+2}})}\) and the \(\cC^k\)-bounds that result from Lemma~\ref{lem:improvedsobolev} yield uniform bounds on \(\|w_{02}^\nu\|_{H^{k+2}(\bQ_{\delta^\nu,\rho_{k+2}})}\), \(\|\wh w^\nu\|_{H^{k+2}(\bQ_{\delta^\nu,\rho_{k+2}})}\), and \(\|\wh w^\nu|_{t=0}\|_{H^{k+1}((-\rho_{k+2},\rho_{k+2}))}\).
These bounds induce uniform bounds on the \(H^{k+2}\)-norms of \(v_0^\nu, v_2^\nu\) on the relevant subdomains of \((-\rho_{k+2},\rho_{k+2})^2\) and on the \(H^{k+1}\)-norms of \(v_1^\nu|_{(-\rho_{k+2},\rho_{k+2})\times\{0\}}\).
The compact embeddings \(H^{k+2} \hookrightarrow \cC^k\) resp.\ \(H^{k+1} \hookrightarrow \cC^k\) for two-dimensional resp.\ one-dimensional domains implies the desired \(\cC^k_\loc\)-convergence of \((v_0^\nu(s, t - \delta^\nu))\) resp.\ \((v_1^\nu(s, 0))\) resp.\ \((v_2^\nu(s,t+\delta^\nu))\) to \(v_0^\infty\) resp.\ \(v_1^\infty\) resp.\ \(v_2^\infty\).

\begin{step3}
We show that if for some \(\ell \in \{0,1,2\}\) and \(\kappa > 0\) the gradient satisfies a lower bound \(|\d v_\ell^\nu(0, \tau^\nu)| \geq \kappa\) for some \(\tau^\nu \to \tau^\infty \in (-\rho,\rho)\),
then at least one of \(v_0^\infty, v_2^\infty\) is nonconstant.
\end{step3}

In the notation of Step 2, it suffices to show that if for some \(\tau^\nu \to \tau^\infty \in [0,\rho)\) and \(\kappa > 0\) the inequality \(| \d w^\nu(0,\tau^\nu) | := | \d w_{02}^\nu(0,\tau^\nu) | + | \d \wh w^\nu(0,\tau^\nu) | \geq \kappa\) is satisfied, then \(u_{02}\) is not constant.
We prove the contrapositive of this statement: assuming that \(u_{02}\) is constant, we will show that the quantities \(\lim_{\nu\to\infty} \sup_{t\in [0,\rho)} | \d w_{02}^\nu(0,t) |\) and \(\lim_{\nu\to\infty} \sup_{t \in [0,\delta^\nu]} | \d \wh w^\nu(0,t) |\) are both zero.

Since the convergence of \((w_{02}^\nu)\) to \(u_{02}\) takes place in \(\cC^1_\loc\), the quantity \(\lim_{\nu\to\infty} \sup_{t \in [0,\rho)} |\d w_{02}^\nu(0,t)|\) is zero.
To see that the quantity \(\lim_{\nu\to\infty} \sup_{t \in [0,\delta^\nu]} |\d \wh w^\nu(0,t)|\) is also zero, note that by the last paragraph of Step 1, the limit \(\ol u\) of \((\wh w^\nu)\) is also constant, which implies the formula \(\d \wh w^\nu =  \d e_{\wh u_{\delta^\nu}}(\wh \zeta^\nu)(\nabla\wh\zeta^\nu)\).
It follows that to prove the equality \(\lim_{\nu\to\infty} \sup_{t\in[0,\delta^\nu]} | \wh\nabla\wh w^\nu(0,t)| = 0\), it suffices to prove the equality \(\lim_{\nu\to\infty} \sup_{t\in[0,\delta^\nu]} | \wh\nabla \wh \zeta^\nu(0,t)| = 0\).
We can now estimate, using the Sobolev inequality \(\|-\|_{\cC^0} \leq C_1\|-\|_{H^1}\) for one-dimensional domains whose lengths are bounded away from zero: \begin{align*}
\limsup_{\nu\to\infty} \sup_{t\in[0,\delta^\nu]} |\wh\nabla \wh \zeta^\nu(0,t)| &\leq \lim_{\nu\to\infty}\sup_{t \in [0,\delta^\nu]} |\wh\nabla\wh \zeta^\nu(0,0)| + \lim_{\nu\to\infty} \sup_{t \in [0,\delta^\nu]} |\wh\nabla\wh \zeta(0,t) - \wh\nabla\wh \zeta(0,0)| \\
&= \lim_{\nu\to\infty} \sup_{t\in[0,\delta^\nu]} |\wh\nabla \wh \zeta^\nu(0,t) - \wh\nabla \wh \zeta^\nu(0,0)| \\
&\leq \lim_{\nu\to\infty} C_1\tint_0^{\delta^\nu} |\wh\nabla_t\wh\nabla\wh\zeta^\nu(0,t)| \,\d t \\
&\leq \lim_{\nu\to\infty} C_1(\delta^\nu)^{1/2} \left(\tint_0^{\delta^\nu} |\wh\nabla_t\wh\nabla\wh\zeta^\nu(0,t)|^2 \,\d t\right)^{1/2} \\
&\sr{\mathclap{\text{Sobolev}}}{\leq} \lim_{\nu\to\infty} C_1(\delta^\nu)^{1/2} \|\wh \zeta\|_{H^3(\wh Q_{\delta^\nu,\rho})} = 0.
\end{align*}
This completes the contrapositive of Step 3, which concludes our proof of Theorem~\ref{thm:nonfoldedstripshrink}.
\end{proof}

\appendix

\section{Removal of singularity for cleanly intersecting Lagrangians} \label{app:remsing}

In this appendix, we sketch a proof of removal of singularity for a holomorphic curve satisfying a generalized Lagrangian boundary condition in an immersed Lagrangian with locally-clean self-intersection.
We emphasize that this is not a new result, see e.g.\ \cite{ca:sympsI, cel:switch, uf:thesis, is:reflection, fs:thesis}.
We have included the following proposition in this paper because our methods allow us to give a short proof.

This removal of singularity will be stated for maps \(u\) with Lagrangian boundary conditions lifting to paths \(\gamma,\gamma'\):
\begin{gather} \label{eq:diskremsingsetup}
u\colon (B(0, 1) \cap \H) \less \{0\} \to M, \qquad \gamma'\colon (-1, 0) \to L', \qquad \gamma\colon (0, 1) \to L, \\
\varphi'(\gamma'(s')) = u(s', 0), \qquad \varphi(\gamma(s)) = u(s, 0) \qquad \forall\: s' \in (-1,0), \: s \in (0,1), \nonumber \\
\partial_s u + J(s,t, u)\partial_t u = 0, \qquad E(u) := \tint u^*\om < \infty, \nonumber
\end{gather}
where \((M,\omega)\) is a closed symplectic manifold, \(\varphi\colon L \to M\) and \(\varphi'\colon L' \to M'\) are Lagrangian immersions with \(L,L'\) closed, and \(J\) is an almost complex structure \(J\colon B(0,1)\cap \H \to \J(M,\om)\).
We will assume that \(\varphi(L), \varphi'(L')\) intersect locally cleanly, which means that there are finite covers \(L = \bigcup_{i=1}^k U_i\), \(L' = \bigcup_{i = 1}^l U_i'\) such that \(\varphi\) resp.\ \(\varphi'\) restrict to an embedding on each \(U_i\) resp.\ \(U_i'\), and \(\varphi(U_i), \varphi'(U_j')\) intersect cleanly for all \(i, j\).

\begin{proposition} \label{prop remsing for L L'}
If \(u,\gamma,\gamma'\) satisfy \eqref{eq:diskremsingsetup}, then \(u\) extends continuously to \(0\).
\end{proposition}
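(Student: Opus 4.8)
The plan is to reduce Proposition~\ref{prop remsing for L L'} to the figure-eight case treated in the body of the paper, by recasting the disk-with-immersed-boundary as a \emph{squashed eight bubble}, exactly as suggested in the introduction. Concretely, I would first fold the half-disk: replace $u$ by the pair $(u(s,t), u(s,-t))$ on $B(0,1)\cap\H$, so that the boundary condition $\gamma,\gamma'$ on the real axis becomes a generalized Lagrangian seam condition valued in a fiber product of $L\times L'$ (via $\varphi\times\varphi'$) with the diagonal, i.e.\ in (the immersed image of) $L\circ L'$ — using $M'=M$ here, or more precisely passing to $M^-\times M$. Since $\varphi(L),\varphi'(L')$ intersect locally cleanly, this fiber product is an immersed, indeed locally cleanly immersed, Lagrangian in $M^-\times M$, so after applying $z\mapsto e^{2\pi(s+it)}$ to pass to cylindrical coordinates near the puncture, the folded map becomes a degenerate strip quilt (a squashed eight) of finite energy, with a single seam.

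The heart of the argument is then to run the same three-part strategy as in the proof of Theorem~\ref{thm:remsing}: (1) a uniform gradient bound in cylindrical coordinates near the puncture; (2) lengths of the loops $\theta\mapsto u(\epsilon e^{i\theta})$ tend to zero as $\epsilon\to 0$; (3) an isoperimetric inequality for the energy, giving exponential energy decay, hence $W^{1,2}$-Cauchyness of the restriction of $u$ to a radial family of arcs, hence a $\cC^0$ limit. For step~(1): unlike the true figure-eight, here there is only one Lagrangian (immersed) boundary condition and no shrinking strip, so I expect the gradient bound to be \emph{much easier} — it follows from the standard mean-value/removal-of-singularity toolkit for $J$-holomorphic curves with Lagrangian boundary (e.g.\ \cite[Lemma~4.6.5]{ms:jh} and \cite[Theorem~B.4.1,~B.4.2]{ms:jh} applied to the folded map into the embedded pieces $\varphi(U_i)$, $\varphi'(U_j')$), together with finiteness of energy; no appeal to the Gromov Compactness Theorem of \cite{bw:compactness} is needed. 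Concretely: if the gradient blows up at $z^\nu\to 0$, rescale to bubble off a nonconstant finite-energy sphere, disk-with-boundary-in-$\varphi(L)$, or disk-with-boundary-in-$\varphi'(L')$ — each carries a quantum of energy — contradicting $E(u)<\infty$.

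Given the gradient bound, step~(2) is literally the argument of Lemma~\ref{lem:lengthsgotozero} with the quilted domain replaced by the (much simpler) folded half-disk: the interior elliptic estimates on translated rectangles give $\cC^m$-bounds, and the $\Phi$, $\Phi'$, $\Phi''$ monotonicity-type argument forces $\Phi'(s)=\|\tfrac{\rd}{\rd t}\gamma_s\|_{L^2}^2\to 0$. Step~(3) is the analogue of Lemma~\ref{lem:isoineq}: restrict to an annulus, reparametrize to the curved boundary of a cylinder, extend into the cylinder using the lengths-go-to-zero bound and the clean-intersection structure (here Lemma~\ref{lem:cleanconsequences}, whose analogue for $L,L'$ is proved identically — clean intersection means two branches meet like vector subspaces), and apply Stokes plus the isoperimetric inequality for symplectic area \cite[Theorem~4.4.1]{ms:jh} to the caps. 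Then exponential energy decay $E(u;s)\le C_1\exp(C_2 s)$ follows as in Step~1 of the proof of Theorem~\ref{thm:remsing}, and the $L^2$-then-$W^{1,2}$-then-$\cC^0$ Cauchy argument of Step~2 there produces the continuous extension. The main obstacle is bookkeeping rather than substance: one must check that folding genuinely lands in an \emph{immersed} (resp.\ cleanly immersed) Lagrangian under only the \emph{local} clean-intersection hypothesis on $\varphi(L),\varphi'(L')$ — i.e.\ that the fiber product $(\varphi\times\varphi')^{-1}(\Delta_M)$ is cut out transversally and its branches intersect cleanly — and, as noted in the footnote to the excerpt, that the relevant elliptic-regularity statements in \cite{ms:jh} survive dropping the closedness of the Lagrangian. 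Since \S\ref{sec:remsing} requires cleanly immersed composition only for the isoperimetric step, the immersed-only hypothesis suffices for everything except Lemma~\ref{lem:isoineq}; if one wants the full Proposition under merely immersed (not cleanly immersed) hypotheses, one replaces the isoperimetric step by the alternative extension argument available when all four branches coincide, exactly as in the parenthetical ``immersed resp.\ cleanly-immersed'' dichotomy flagged in the introduction.
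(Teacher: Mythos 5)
Your core strategy matches the paper's: establish a uniform gradient bound in cylindrical coordinates via a bubbling contradiction, show lengths tend to zero, prove an isoperimetric inequality, and then run the exponential-decay/Cauchy argument of Theorem~\ref{thm:remsing}. You also correctly observe that the Gromov Compactness Theorem of \cite{bw:compactness} is not needed here, since there is no shrinking strip and standard bubbling suffices.

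The one genuine misstep is the initial fold. As written, $(u(s,t),u(s,-t))$ is undefined: $u$ lives only on $B(0,1)\cap\H$, so $u(s,-t)$ has no meaning for $t>0$. More to the point, no fold is performed in the paper's proof of Proposition~\ref{prop remsing for L L'}, and none is needed: the half-disk with switching boundary conditions $\varphi'(L')$ and $\varphi(L)$ on its two boundary arcs is analyzed directly in cylindrical coordinates on $(-\infty,0]\times[0,\tfrac12]$. The fold in \S\ref{sec:remsing} exists to straighten a seam between adjacent strips; here there is no seam. Even a correctly defined reflection of the kind you seem to intend, say $(u(s,t),u(-s,t))$ on a quarter-disk, yields one boundary arc in $\varphi(L)\times\varphi'(L')$ and another in the diagonal $\Delta_M$ --- a two-boundary-condition problem again, not a squashed eight whose generalized seam condition lives on all of $\R$. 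Beyond this, the one technical point you gesture at but do not actually resolve is how to extract the rescaled bubble limit when $\varphi,\varphi'$ are merely immersions: the paper's fix is a Lebesgue number argument. Choosing a Lebesgue number $\delta$ for finite covers of $L$ and $L'$ by sets on which $\varphi,\varphi'$ embed, and restricting the rescaled maps to $B(0,\tfrac14\delta)$, one uses the gradient bound to force boundary values into a single embedded chart $\varphi(U_i)$ or $\varphi'(U_j')$ after passing to a subsequence, at which point the embedded-Lagrangian compactness of \cite[Lemma~4.6.5, Theorem~B.4.2]{ms:jh} applies and produces the contradiction. Your parenthetical about ``the embedded pieces $\varphi(U_i)$, $\varphi'(U_j')$'' points at this, but the localization has to be constructed, not assumed.
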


\begin{proof}[Sketch proof of Proposition~\ref{prop remsing for L L'}]
The first part of the proof of \cite[Theorem~7.3.1]{ah} yields a uniform gradient bound on \(u\) in cylindrical coordinates near the puncture.
We must make a minor modification due to the fact that the Lagrangians defining our boundary conditions are immersed, not embedded: Recall that the uniform gradient bound in cylindrical coordinates is established in \cite{ah} by assuming that there is a sequence \(((s_k,t_k)) \subset (-\infty, 0] \times [0, \tfrac 1 2]\) so that \(\lim_{k \to \infty} | \d u(s_k, t_k) | = \infty\), which necessarily has \(s_k \to -\infty\).
Rescaling at the points \((s_k, t_k)\) yields a sequence of maps that converges in \(\cC^\infty_\loc\) to a nonconstant map on either \(\R^2\) or \(\pm\H\), which contradicts the finiteness of the energy.
To adapt this proof to our situation, let \(\delta\) be a Lebesgue number for \(L = \bigcup_{i = 1}^k U_i\) and \(L' = \bigcup_{i = 1}^l U_i'\).
That is, if \(A\) is a subset of \(L\) (resp.\ of \(L'\)) with \(\diam A \leq \delta\), then \(A \subset U_i\) (resp.\ \(A \subset U_i'\)) for some \(i\).
Now rescale at the points \((s_k, t_k)\) as in \cite{ah}, but restrict the resulting maps to the intersection of \(B(0, \tfrac 1 4\delta)\) with their domain.
The gradient bound on these rescaled maps and our choice of \(\delta\) allows us to pass to a subsequence so that for some \(i, j\), all the rescaled maps have boundary values in \(\pi(U_i)\) or \(\pi'(U_j')\).
A further subsequence converges in \(\cC^\infty_\loc\), so we get a contradiction and therefore a uniform bound on \(|\nabla u|\) in cylindrical coordinates.

The analogue of Lemma~\ref{lem:lengthsgotozero} holds in this setting; the proof is the same as for Lemma~\ref{lem:lengthsgotozero} but simpler.
As in the first paragraph, some care must be taken with the immersed Lagrangians.

The analogue of Lemma~\ref{lem:isoineq} holds in this setting, though the proof must be modified.
Specifically, the domains \(U_0, U_1, U_2, U_3\) used in the proof of that lemma must be replaced by the domain \(\ol B(0,1) \cap \H\).

A slight modification of the proof of Theorem~\ref{thm:remsing} establishes Proposition~\ref{prop remsing for L L'}.
\end{proof}

\end{document}